\newcommand{\Q}{{\mathbb Q}}
\newcommand{\Z}{{\mathbb Z}}
\newcommand{\R}{{\mathbb R}}
\newcommand{\C}{{\mathbb C}}
\newcommand{\p}{\mathfrak p}
\newcommand{\OF}{{\mathfrak o}}
\newcommand{\GL}{{\rm GL}}
\newcommand{\GSp}{{\rm GSp}}
\newcommand{\Mat}{{\rm M}}
\newcommand{\diag}{{\rm diag}}
\newcommand{\Hom}{{\rm Hom}}
\newcommand{\SSp}{{\rm Sp}}
\newcommand*{\transp}[2][-3mu]{\ensuremath{\mskip1mu\prescript{\smash{\mathrm t\mkern#1}}{}{\mathstrut#2}}}
\newcommand{\Mod}[1]{\ (\textnormal{mod}\ #1)}
\newtheorem{lemma}{Lemma}[section]
\newtheorem{theorem}[lemma]{Theorem}
\newtheorem{corollary}[lemma]{Corollary}
\newtheorem{proposition}[lemma]{Proposition}
\begin{document}
\date{\today}
\title{The paramodular Hecke algebra}
\author{Jennifer Johnson-Leung}
\address{Department of Mathematics and Statistical Science\\University of Idaho\\Moscow, ID USA}
\email{jenfns@uidaho.edu}
\author{Joshua Parker}
\address{Division of Mathematics and Natural Science\\Elmira College\\Elmira, NY USA}
\email{jparker@elmira.edu}
\author{Brooks Roberts}
\address{Department of Mathematics and Statistical Science\\University of Idaho\\Moscow, ID USA}
\email{brooksr@uidaho.edu}

\begin{abstract}
We give a presentation via generators and relations of the local graded paramodular Hecke algebra of prime level. In particular, we prove that the paramodular Hecke algebra is isomorphic to the quotient of the free $\Z$-algebra generated by four non-commuting variables by an ideal generated by seven relations. Using this description, we derive rationality results at the level of characters and give a characterization of the center of the Hecke algebra. Underlying our results are explicit formulas for the product of any generator with any double coset.
\end{abstract}
\subjclass[2020]{11F60, 11F46, 11F66, 20C08}
\keywords{Hecke algebra, paramodular, Euler factor, non-commutative graded algebra}
\maketitle

\section{Introduction}
In this work we investigate the paramodular Hecke algebra $\mathcal{H}$ of level $\p$ over a fixed non-archimedean local field $F$ of characteristic zero with ring of integers $\OF$, maximal ideal $\p=(\varpi)\subset\OF$, and $q = \# \OF /\p$. We define $\mathcal{H}$ as the $\Z$-algebra spanned by the double cosets in $K\backslash\Delta/ K$, where $K$ is the paramodular group of level $\p$ and $\Delta$ is a certain semigroup in $\GSp(4,F)$, defined below in  \eqref{delta}; $\mathcal{H}$ has a natural grading determined by $\Delta$. Our main structural result is that $\mathcal{H}$ is isomorphic to the quotient of the (non-commutative) graded, free $\Z$-algebra generated by two elements in degree 1 and two elements in degree 2 by the two-sided ideal generated by seven explicit homogeneous relations. As a consequence of this description, we prove that $\mathcal{H}$ is non-commutative and has zero-divisors. We also show that the center of $\mathcal{H}$ is generated by one element in degree 1 and two elements in degree 2 which are algebraically independent.  Key ingredients for our analysis are the formulas of Theorems \ref{degree1multiplicationthm} and \ref{degree2multiplicationthm} which explicitly describe the product of a generator with any double coset. We note that a similar Hecke algebra studied in \cite{GK} is a quotient of the algebra under consideration in this paper.

In addition, we investigate the formal power series in the indeterminate $t$
\begin{equation}\label{formalpowerserieseq}
\sum_{k=0}^\infty T(q^k)t^k,
\end{equation}
where $T(q^k)$ is the sum of all double cosets of degree $k$. Series of this form for Hecke algebras are important in the theory of modular forms and have been studied by various authors (for example, \cite{Andrianov_1969,S2,TatTak1983,Bocherer1986}). In many cases, when the relevant Hecke algebra is commutative, such series have been shown to be rational functions with denominators related to Euler factors of appropriate $L$-functions. This question has also been investigated in some non-commutative settings (see \cite{Dulinski1999,Hyodo2015,hyodo2022formal}). We prove that, for certain characters $\chi$ of the Hecke algebra,  the formal power series
\begin{equation*}
\sum_{k=0}^\infty \chi(T(q^k))t^k
\end{equation*}
is a rational function in $t$. For those characters $\chi$ that arise in the theory of paramodular Siegel modular forms, the denominator is given by the Euler factor of the  spin $L$-function. Interestingly, our result seems to suggest that \eqref{formalpowerserieseq} is not a rational function in $\mathcal{H}(t)$: see the remarks after the proof of Theorem \ref{charrationalitytheorem}.

We expect that the explicit results of this paper will be useful for computational investigations of paramodular Siegel modular forms. In addition, our detailed results about $\mathcal{H}$ may provide a useful example for the general study of the Hecke algebras corresponding to parahoric subgroups of other reductive algebraic groups defined over a local field. Some of the results in this work appear in the PhD thesis of the second author. The first and second author were partially supported by the Renfrew Faculty Fellowship from the College of Science at the University of Idaho.

\section{Definitions and Notation}
\label{defnotationsec}

Throughout this paper $F$ is a fixed nonarchimedean local
field of characteristic zero.
Let $\OF$ be the ring of integers of $F$, let $\p$
be the prime ideal of $\OF$,  fix a generator $\varpi$
of $\p$, and let $q$ be the order of $\OF/\p$. If $x \in F^\times$,
then we let $v(x)$ be the unique integer such that
there exists $u \in \OF^\times$ so that $x=u\varpi^{v(x)}$.
Let 
\begin{equation}
J=
\begin{bsmallmatrix}
&&1&\\
&&&1\\
-1&&&\\
&-1&&
\end{bsmallmatrix}.
\end{equation}
We let $G=\GSp(4,F)$ be the group of $g \in \GL(4,F)$ for
which there exists $\lambda \in F^\times$ such that $\transp{g} J g = 
\lambda J$; such a $\lambda$ is uniquely determined and will be
denoted by $\lambda (g)$. We regard $G$ as group of td-type \cite{Cart}.
We denote the center of $G$ by $Z$; this consists of the elements
$\mathrm{diag}(z,z,z,z)$ for $z \in F^\times$.
We let $G_1=\SSp(4,F)$ be the subgroup of $g \in G$ with $\lambda(g)=1$.
Let 
\begin{equation}\label{delta}
\Delta =  G \cap
\begin{bsmallmatrix}
\OF & \OF & \p^{-1} & \OF\\
\p & \OF & \OF & \OF\\
\p & \p & \OF & \p \\
\p & \OF & \OF & \OF
\end{bsmallmatrix}.
\end{equation}
Then $\Delta$ contains $1$ and is a semi-group contained in $G$.
If $g \in \Delta$, then a calculation shows that $\det(g) \in \OF$
so that $\lambda(g) \in \OF$ and thus $v(\lambda(g)) \geq 0$.
We define
\begin{equation}
\label{Kdefeq}
K = \{ k \in \Delta: \lambda(k) \in \OF^\times\} = \{ k \in \Delta: v(\lambda(k)) = 0\}.
\end{equation}
Then $K$ is a maximal compact open subgroup of $G$, and we refer
to $K$ as the \emph{paramodular subgroup} of $G$ of level $\p$.
The group $K$ contains the Iwahori subgroup $I$ (see \eqref{iwahorieq})
and is thus a standard parahoric subgroup of $G$.
In total, there are seven standard parahoric subgroups of $G$:
$I$, $\mathrm{Kl}(\p)$, $\Gamma_0(\p)$, $u_1 \mathrm{Kl}(\p)u^{-1}$,
$\GSp(4,\OF)$, $u_1 \GSp(4,\OF) u_1^{-1}$, and $K$. Here, $u_1$
is as in \eqref{u1def} and $\mathrm{Kl}(\p)$ and $\Gamma_0(\p)$
are the Klingen and Siegel congruence subgroups of level $\p$, respectively;
for more information see, for example, Chap.~9 of \cite{JLRS}.

We let $\mathcal{H}$ be the ring $\mathcal{R}(K,\Delta)$
as defined in Sect.~3.1, p.~54 of \cite{S2} (note that the commensurator of $K$
in $G$ is $G$ as $K$ is compact and open). As a set, $\mathcal{H}$
consists of the formal sums $\sum_{C \in K \backslash \Delta / K} j(C) C$
with $j(C) \in \Z$ and $j(C)=0$ for all but finitely many $C \in K \backslash \Delta / K$.
The addition for $\mathcal{H}$ is defined in the obvious fashion, and
the double coset $K\cdot 1 \cdot K =K$ is the multiplicative identity of $\mathcal{H}$. The ring $\mathcal{H}$
is naturally a $\Z$-algebra, and we refer to $\mathcal{H}$ as the
\emph{paramodular Hecke algebra}.
If $C_1,C_2 \in K\backslash \Delta / K$, then we let
\begin{equation}
\label{c1c2prodeq}
C_1 \cdot C_2 = \sum\limits_{C \in K\backslash \Delta / K} m(C_1,C_2;C) C
\end{equation}
where $m(C_1,C_2;C)$ is a certain non-negative integer that is zero for all but finitely many $C \in K\backslash \Delta / K$.
Let $C_1,C_2,C \in K\backslash \Delta / K$. 
If $g_1,g_2,g \in \Delta$ are such that $C_1=Kg_1K$, $C_2=Kg_2K$, and $C=KgK$, 
then 
\begin{equation}
\label{gg1g2eq}
m(C_1,C_2;C) \neq 0 \iff g \in Kg_1Kg_2K
\end{equation}
and 
$m(C_1,C_2;C)$ is the number of distinct right $K$ cosets $Kh$ in $Kg_1^{-1}Kg 
\cap K g_2K$ (Sect.~3.1, p.~52 of \cite{S2}); this is also the number of distinct
left $K$ cosets $hK$ in $gKg_2^{-1}K \cap K g_1 K$. Using this, it is easy to
see that if $Kg_1 K = \sqcup_{i \in I} h_i K$ is a disjoint decomposition, then 
\begin{equation}
\label{doublemulteq}
m(C_1,C_2;C) = \#\{i \in I: h_i^{-1} g \in K g_2 K\},
\end{equation}
or equivalently,
\begin{equation}
\label{doublemulteq2}
m(C_1,C_2;C) = \#\{i \in I:
\text{there exists $k \in K$ such that $h_i^{-1} g k g_2^{-1} \in K$} \}.
\end{equation}
For $k \in \Z_{\geq 0}$ let $\mathcal{H}_k$ be the $\Z$-linear span in
$\mathcal{H}$ of the double cosets $KgK$ with $g \in \Delta$ and  $v(\lambda(g))=k$. Then
\begin{equation}
\mathcal{H} = \oplus_{k=0}^\infty \mathcal{H}_k, \qquad
\mathcal{H}_k \cdot \mathcal{H}_j \subset \mathcal{H}_{k+j}, \quad k,j \in
\Z_{\geq 0}
\end{equation}
where the second statement follows from \eqref{gg1g2eq}.
Thus, $\mathcal{H}$ is a graded $\Z$-algebra.
Let
\begin{align}
D_\Delta & =  \{ (a,b,c) \in \Z^3: 0 \leq a,\ 0 \leq b,\ 0 \leq c-a,\ 0 \leq c-b \}, \\
S_\Delta &= \{ (a,b,c) \in D_\Delta:  a \leq c-a,\  b \leq c-b\}.
\end{align}
The set
$S_\Delta$ is a monoid under addition. For $a, b, c \in \Z$ and $g \in \Delta$
we define
\begin{equation}
d(a,b,c) =
\mathrm{diag}(\varpi^a,\varpi^b,\varpi^{c-a},\varpi^{c-b}), \qquad
T(g) = KgK \in
\mathcal{H}.
\end{equation}
For $a,b,c \in \Z$ we have $d(a,b,c) \in \Delta$ if and only if $(a,b,c) \in D_{\Delta}$;
if $(a,b,c) \in D_\Delta$, then we  abbreviate
\begin{equation}
T(a,b,c) = T(d(a,b,c)),
\end{equation}
and we have $T(a,b,c) \in \mathcal{H}_c$.
Let
\begin{equation}
w=
\begin{bsmallmatrix}
&1&&\\
\varpi&&&\\
&&&\varpi\\
&&1&
\end{bsmallmatrix}.
\end{equation}
Then $w \in \Delta$, $w \Delta w^{-1} = \Delta$, $w K w^{-1} = K$, and $\lambda(w) = \varpi$.

The algebra $\mathcal{H}$ is related to a convolution algebra. Let
$\mathcal{H}'$ be the $\C$ vector space of all functions $f:G \to \C$ with compact support such that $f(k_1gk_2)=f(g)$ for $k_1,k_2 \in K$, $g \in G$, and such that the support of $f$ is contained in
$\Delta$. For $f_1,f_2 \in \mathcal{H}'$ let $f_1 * f_2 \in \mathcal{H}'$
be defined by $(f_1 * f_2)(g) = \int_G f_1(gh^{-1})f_2(h)\, dh$
for $g \in G$; here we use the Haar measure on $G$ such that $K$ has
measure $1$. Then $\mathcal{H}'$ is a $\C$-algebra with  product $*$.
There is an isomorphism of $\C$-algebras
\begin{equation}
\label{Hprimeisoeq}
\C \otimes_\Z \mathcal{H} \stackrel{\sim}{\longrightarrow} \mathcal{H}'
\end{equation}
that sends $a \otimes KgK$ to $a f_{KgK}$, where $a \in \C$, $g \in \Delta$,
and $f_{KgK}$ is the characteristic function of $KgK$.

\section{The structure of \texorpdfstring{$\mathcal{H}$}{H}}
\label{structuresec}

In this section we prove several results about the algebraic structure of $\mathcal{H}$.
After determining in Lemma \ref{BTdecomplemma} convenient representatives for
the double cosets that span $\mathcal{H}$, in Theorem \ref{degree1multiplicationthm} and Theorem \ref{degree2multiplicationthm} we explicitly compute the product of $X=T(0,0,1)$
and $Y_1 = T(0,1,2)$ with any double coset. Using these formulas we prove that
$\mathcal{H}$ is generated by $V=T(w)$, $X$, $Y_1$, and $Y_2=T(1,0,2)$, and we
give a presentation of $\mathcal{H}$ in terms of generators and relations.
We conclude this section by making some basic observations about $\mathcal{H}$,
and we relate $\mathcal{H}$ to an algebra studied in \cite{GK}.

\begin{lemma}
\label{BTdecomplemma}
If $C \in K \backslash \Delta / K$, then there exist $(a,b,c) \in S_\Delta$ and
$\delta \in \{0,1\}$ such that $C=Kw^\delta d(a,b,c) K$. If $(a,b,c),(a',b',c') 
\in S_\Delta$ and $\delta,\delta' \in \{0,1\}$ are such that $K w^{\delta} d(a,b,c)
K=Kw^{\delta'} d(a',b',c') K$, then $\delta=\delta'$ and $(a,b,c)=(a',b',c')$.
\end{lemma}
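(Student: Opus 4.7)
The plan is to split the proof into existence and uniqueness, and I expect uniqueness — specifically distinguishing $(a,b,c)$ from $(b,a,c)$ when $a\ne b$ — to be the main obstacle.

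For existence, given $g\in\Delta$, I would proceed by explicit matrix reduction using the action of $K$ on both sides. The group $K$ contains enough elementary matrices — coming from the root subgroups of $G$ that lie inside $K$ — to carry out a Cartan-type reduction: successive left- and right-multiplication by such elements clears the off-diagonal entries of $g$. The similitude relation $\transp gJg=\lambda(g)J$ then constrains the reduced matrix to be either diagonal or of the monomial form $w\cdot(\text{diagonal})$, so $g\in Kw^\delta d(a,b,c)K$ for some $(a,b,c)\in D_\Delta$ and $\delta\in\{0,1\}$. Should the resulting $(a,b,c)$ lie outside $S_\Delta$, a further manipulation — using the identity $wd(a,b,c)w^{-1}=d(b,a,c)$ (a direct computation) together with suitable $K$-elements — repairs this, possibly at the cost of toggling $\delta$. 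In effect this is a parahoric analogue of the usual Cartan decomposition, with $K$ replacing the hyperspecial maximal compact and $S_\Delta\times\{0,1\}$ indexing the fundamental domain of the paramodular Weyl group extended by the Atkin--Lehner involution.

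For uniqueness, suppose $Kw^\delta d(a,b,c)K=Kw^{\delta'}d(a',b',c')K$. The first invariant is the similitude class modulo $\OF^\times$: since $\lambda(K)\subseteq\OF^\times$, any two representatives have the same value of $v(\lambda)$, giving $c+\delta=c'+\delta'$. The second invariant is the relative position of $gL_\star$ inside the $K$-stable lattice $L_\star=\OF e_1+\OF e_2+\p e_3+\OF e_4$ (one checks directly from the matrix pattern defining $K$ that $L_\star$ is $K$-stable). A short computation shows that the multiset of $\OF$-elementary divisors of $g$ acting on $L_\star$ is $\{a,b,c-a,c-b\}$ when $\delta=0$ and $\{a+1,b,c-a+1,c-b\}$ when $\delta=1$; its sum is $2c$ or $2c+2$ respectively, which pins down $c$ and (using $c+\delta$) also $\delta$, and then the unordered pair $\{a,b\}$.

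The main obstacle is separating $(a,b,c)$ from $(b,a,c)$ in $S_\Delta$ when $a\ne b$, since both invariants above are symmetric in the swap $a\leftrightarrow b$. The identity $wd(a,b,c)w^{-1}=d(b,a,c)$ combined with $w\in N_G(K)\setminus K$ (which holds because $\lambda(w)=\varpi\notin\OF^\times$) shows that the two diagonal matrices lie in the same orbit of $w$-conjugation on $K\backslash G/K$, yet they may still inhabit distinct $K$-double cosets. To prove they do, I would introduce a finer $K$-invariant that breaks the $w$-symmetry: beyond $L_\star$, the Iwahori $I\subset K$ stabilizes a finer lattice chain, and the relative positions of $g$ on this chain — or equivalently the reduction of $g$ modulo $\p$ in suitably chosen coordinates — give an invariant that depends on $a$ and $b$ individually rather than on $\{a,b\}$. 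Producing such an invariant explicitly, and verifying that it separates $d(a,b,c)$ from $d(b,a,c)$, is the main technical content of the uniqueness argument.
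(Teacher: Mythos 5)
Your existence sketch takes a different route from the paper's (which invokes the Bruhat--Tits decomposition $IgI=IrI$ for the standard Iwahori $I$ and then converts the word $r$ into $w^{\delta}d(a,b,c)$ using $s_0,s_2\in K$, $s_1=wd(-1,0,-1)$, and $u_1\in Kw$); a direct elementary-matrix reduction is plausible in principle, but as written it defers the entire content to the unverified claim that $K$ contains enough root elements to reach a monomial matrix. The serious problems, however, are in your uniqueness argument, which has two genuine gaps.

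First, your second invariant does not deliver what you claim. The sum of the elementary divisors on $L_\star$ is $2c$ for $\delta=0$ and $2c+2$ for $\delta=1$, i.e.\ it always equals $2(c+\delta)=2v(\lambda(g))$; it is therefore a function of your first invariant and pins down neither $c$ nor $\delta$, so the inference ``which pins down $c$ and \dots\ also $\delta$'' is circular. The full multiset fails as well: $w$ and $d(0,0,1)$ both have $v(\lambda)=1$ and both have elementary-divisor multiset $\{0,0,1,1\}$ on $L_\star$, yet the lemma asserts $KwK\neq Kd(0,0,1)K$ (these become the distinct generators $V$ and $X$; indeed $KwK=wK$ is a single right coset while $Kd(0,0,1)K$ is not). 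So your invariants never separate $\delta$ from $\delta'$ and hence never determine $c$, before one even reaches the $a\leftrightarrow b$ issue. Second, you correctly isolate the separation of $(a,b,c)$ from $(b,a,c)$ as the crux but leave it unresolved, and the tool you propose is defective: relative positions along the finer lattice chain stabilized by the Iwahori are only $I$-bi-invariant, not $K$-bi-invariant, so they are not well defined on $K$-double cosets. A workable version of your idea is to use the second $K$-stable lattice $N=\OF e_1+\p e_2+\p e_3+\p e_4$ (with $\varpi L_\star\subset N\subset L_\star$); the relative position of $gL_\star$ against $N$ is $K$-bi-invariant and does distinguish, e.g., $d(0,1,2)$ from $d(1,0,2)$. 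The paper instead writes $d(a,b,c)kd(a',b',c')^{-1}=k'$ with $k,k'\in K$ and exploits \eqref{Kform}: the $2\times2$ blocks of $k$ supported on rows and columns $\{1,3\}$ and on $\{2,4\}$ lie in $\GL(2,\OF)$, and since $d(a,b,c)$ acts on these blocks by $\diag(\varpi^{a},\varpi^{c-a})$ and $\diag(\varpi^{b},\varpi^{c-b})$ respectively, two independent $\GL(2)$ Cartan decompositions recover $a$ and $b$ separately; a similar computation excludes $\delta\neq\delta'$. As it stands, your uniqueness proof is incomplete on both counts.
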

\begin{proof}
Let $C \in K \backslash \Delta / K$ and let $g \in \Delta$ be such that $C=KgK$.
Let $I$ be the standard Iwahori subgroup of $G$ so that 
\begin{equation}
\label{iwahorieq}
I = \GSp(4,\OF) 
\cap 
\begin{bsmallmatrix}
\OF &\OF &\OF &\OF\\
\p &\OF &\OF &\OF\\
\p &\p &\OF & \p\\
\p & \p & \OF &\OF
\end{bsmallmatrix}.
\end{equation}
Also, let 
\begin{equation}
s_0 = 
\begin{bsmallmatrix}
&&-\varpi^{-1}&\\
&1&&\\
\varpi&&&\\
&&&1
\end{bsmallmatrix}, \qquad
s_1=
\begin{bsmallmatrix}
&1&&\\
1&&&\\
&&&1\\
&&1&
\end{bsmallmatrix}, \qquad
s_2=
\begin{bsmallmatrix}
1&&&\\
&&&1\\
&&1&\\
&-1&&&
\end{bsmallmatrix}
\end{equation}
and 
\begin{equation}
\label{u1def}
u_1 =
\begin{bsmallmatrix}
&&& 1\\
&&-1&\\
&-\varpi&&\\
\varpi &&&
\end{bsmallmatrix}.
\end{equation}
Then $s_0, s_2 \in K$, and
\begin{equation}
\label{wu1releq}
w=u_1 k \quad \text{with}\quad k = -s_2 s_0 = -s_0 s_2 \in K.
\end{equation}
Let $T$ be the diagonal subgroup of $G$, let $T^\circ$ be the subgroup of $T$ 
of elements with diagonal elements in $\OF^\times$, and let $\mathrm{N}(T)$ be 
the normalizer of $T$ in $G$. The Iwahori-Weyl group (or extended affine Weyl 
group) $W^e$ is $N(T)/T^\circ$, and the images of $s_0,s_1,s_2,$ and $u_1$ 
generate $W^e$. By the Bruhat-Tits decomposition (see for example
\cite{Cart} or chap.~9 of \cite{JLRS}) there exists a product
$r=r_1\cdots r_t$, where $r_1,\dots,r_t \in \{s_0^{\pm 1},s_1^{\pm 
1},s_2^{\pm 1},u_1^{\pm 1} \}$, such that $IgI=IrI$. Hence, $C=KgK=KrK$. 
If $s \in \{s_0,s_1,s_2,u_1\}$, then $s^{-1} = sd(a,b,c)k$ for some $(a,b,c)
\in \Z^3$ and $k \in T^\circ=T\cap K$. Hence, $C = K h_1\cdots h_\ell d(a,b,c) K$ for
some $h_1,\dots,h_\ell \in \{s_0,s_1,s_2,u_1\}$ and $(a,b,c) \in \Z^3$.
If 
$h_1=s_0$ or $s_2$, then $h_1 \in K$ and $C=Kh_2\cdots h_\ell d(a,b,c) K$. If
$h_1=s_1$, then $C=Kwh_2\cdots h_\ell d(a,b,c)K$ for some $(a,b,c) \in \Z^3$ as 
$s_1=wd(-1,0,-1)$. If $h_1=u_1$, then $C=Kwh_2\cdots h_\ell K$ as $u_1=kw$ for 
some $k \in K$. In every case we have $C=Kw^\delta h_2\cdots h_\ell 
d(a,b,c)K$ for some $\delta \in \Z_{\geq 0}$ and $(a,b,c) \in \Z^3$. 
Applying the same reasoning successively to $h_2,\dots, h_\ell$, and using 
that $w$ normalizes $K$, we conclude that $C=K w^\delta d(a,b,c) K$ for some 
$\delta \in \Z_{\geq 0}$ and $(a,b,c) \in \Z^3$. Since $w^2=d(1,1,2)$ we may
assume that $\delta \in \{0,1\}$. Also, since $C \subset \Delta$ we have $w^\delta
d(a,b,c) \in \Delta$; this implies that $d(a,b,c) \in \Delta$ so that
$(a,b,c) \in D_\Delta$. As
$C = K w^\delta d(a,b,c) K =w^\delta K d(a,b,c) K$, $d(a,b,c) \in \Delta$, and $s_0,s_2 \in K$,
we may conjugate $d(a,b,c)$ by $s_0$ or $s_2$, if necessary, to arrange
that $(a,b,c) \in S_\Delta$.
For the uniqueness assertion, assume that $K w^\delta d(a,b,c) K = K 
w^{\delta'} d(a',b',c')K$ for some $\delta,\delta' \in \{0,1\}$ and 
$(a,b,c),(a',b',c') \in S_\Delta$. First assume that $\delta=\delta'=0$. Then
$d(a,b,c)kd(-a',-b'-c')=k'$ for some $k,k' \in K$. Applying the similitude 
homomorphism $\lambda$ and taking valuations we get $c=c'$. Next, using that 
\begin{equation}\label{Kform}
k=\begin{bsmallmatrix}
a_1 & a_2 & b_1 \varpi^{-1} & b_2\\
a_3 \varpi & a_4 & b_3 & b_4\\
c_1 \varpi & c_2 \varpi & d_1 & d_2 \varpi \\
c_3 \varpi & c_4 & d_3 & d_4 
\end{bsmallmatrix} \in K
\implies
\begin{bsmallmatrix}
a_1 & b_1 \\
c_1 & d_1 
\end{bsmallmatrix},
\begin{bsmallmatrix}
a_4 & b_4 \\
c_4 & d_4
\end{bsmallmatrix} \in \GL(2,\OF),
\end{equation}
a calculation and argument shows that $a=a'$ and $b=b'$. A similar analysis shows that
$\delta \neq \delta'$ is impossible. Finally, if $\delta=\delta'=1$, then 
$K  d(a,b,c) K = K 
 d(a',b',c')K$ as $w$ normalizes $K$; hence $a=a'$, $b=b'$, and $c=c'$ by the 
first case.
\end{proof}

It is worth noting that, as used in the proof of Lemma \ref{BTdecomplemma},
\begin{equation}
Kd(a,b,c)K = Kd(c-a,b,c)K=Kd(a,c-b,c)K=Kd(c-a,c-b,c)K
\end{equation}
for $a,b,c \in \Z$ (since $s_0,s_2 \in K$).

Using Lemma~\ref{BTdecomplemma} we find that for $k \in \Z_{\geq 0}$,
\begin{equation}
\label{rankeq}
\mathrm{rank}_\Z (\mathcal{H}_k )= (2k^2+4k+3+(-1)^k)/4,
\end{equation}
and  $\mathrm{rank}_\Z (\mathcal{H}_{k+1}) = \mathrm{rank}_\Z (\mathcal{H}_k) +k
+1 +(1+(-1)^{k+1})/2$. The first  few ranks are
$$
\begin{array}{cccccccccc}
\toprule
k & 0&1&2&3&4&5&6&7&8\\
\midrule
\mathrm{rank}_\Z (\mathcal{H}_k ) & 1 &2 &5& 8 & 13 & 18 & 25 & 32  & 41 \\
\bottomrule
\end{array}
$$

We define
\begin{align}
V & = T(w) \in \mathcal{H}_1, \label{Vdef}\\
X &= T(\mathrm{diag}(1,1,\varpi,\varpi)) = T(0,0,1) \in \mathcal{H}_1, \label{Xdef}\\
Y_1 &= T(\mathrm{diag}(1,\varpi,\varpi^2,\varpi))=T(0,1,2) \in \mathcal{H}_2,\label{Y1def} \\
Y_2 &= T(\mathrm{diag}(\varpi,1,\varpi,\varpi^2)) = T(1,0,2) \in \mathcal{H}_2.\label{Y2def}
\end{align}

For the following lemma see Proposition~3.7 of \cite{S2}.
\begin{lemma}
\label{normalizerlemma}
If $g,h \in \Delta$ and $gK=Kg$, then $T(gh) = T(g) T(h)$ and $T(hg) = T(h)T(g)$ in $\mathcal{H}$.
\end{lemma}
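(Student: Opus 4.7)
The hypothesis $gK = Kg$ is equivalent to $g$ normalizing $K$ (that is, $gKg^{-1} = K$), and in particular $KgK$ collapses to the single right coset $gK$ (and to the single left coset $Kg$). My plan is to derive both identities directly from the multiplicity formula \eqref{doublemulteq}: if $Kg_1K = \sqcup_{i \in I}\eta_i K$ is a disjoint decomposition into right $K$-cosets, then for any $f \in \Delta$,
\begin{equation*}
m(Kg_1K, Kg_2K; KfK) = \#\{i \in I : \eta_i^{-1}f \in Kg_2K\}.
\end{equation*}

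For $T(g)T(h) = T(gh)$, I would apply the formula with $g_1 = g$ and $g_2 = h$. The decomposition of $KgK = gK$ has the single representative $\eta_1 = g$, so $m(KgK, KhK; KfK)$ is $1$ if $g^{-1}f \in KhK$ and $0$ otherwise. The condition $g^{-1}f \in KhK$ is equivalent to $f \in gKhK = K(gh)K$, using $gK = Kg$ to slide $g$ across $K$. Consequently the only double coset appearing in $T(g)T(h)$ is $K(gh)K$, with coefficient $1$.

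For $T(h)T(g) = T(hg)$, I would take $g_1 = h$, $g_2 = g$, and fix a decomposition $KhK = \sqcup_{i \in I}\eta_iK$, which in general has many terms. Then $m(KhK, KgK; KfK) = \#\{i : \eta_i^{-1}f \in gK\}$. The condition $\eta_i^{-1}f \in gK$ rewrites as $f \in \eta_i gK$, or equivalently, multiplying on the right by $g^{-1}$ and using $gKg^{-1} = K$, as $fg^{-1} \in \eta_i K$. Since the cosets $\eta_i K$ are pairwise disjoint, at most one index $i$ satisfies this, and exactly one does precisely when $fg^{-1} \in KhK$, i.e.\ when $f \in KhKg = K(hg)K$ (applying $Kg = gK$ once more). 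Hence $T(h)T(g)$ also reduces to the single term $T(hg)$ with coefficient $1$.

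The one subtle point is in the second identity: a priori the multiplicities $m(KhK, KgK; KfK)$ could be large, since $KhK$ is not a single coset. The key step is to use $gKg^{-1} = K$ to transport the condition $\eta_i^{-1}f \in gK$ into the coset membership $fg^{-1} \in \eta_i K$, which by disjointness can hold for at most one $i$. Once this is in hand, both halves are essentially bookkeeping with \eqref{doublemulteq}.
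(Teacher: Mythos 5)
Your proof is correct. Note that the paper does not actually prove this lemma: it simply cites Proposition~3.7 of \cite{S2}, where the standard argument is carried out. Your write-up is essentially that standard argument, reconstructed from the multiplicity formula \eqref{doublemulteq}: the hypothesis $gK=Kg$ makes $KgK=gK$ a single coset, which forces the multiplicity to be $0$ or $1$ in the first product, and in the second product the conjugation-invariance $gKg^{-1}=K$ converts the counting condition into membership of $fg^{-1}$ in exactly one of the disjoint cosets $\eta_iK$. Both halves check out, including the identifications $gKhK=K(gh)K$ and $KhKg=K(hg)K$, and the only unstated triviality is that $gh,hg\in\Delta$ (which holds since $\Delta$ is a semigroup), so the double cosets you produce do lie in $K\backslash\Delta/K$. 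In short: correct, self-contained, and the same route as the cited reference.
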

By Lemma \ref{normalizerlemma}, since $w$ normalizes $K$,  if $(a,b,c)\in D_\Delta$,
then
\begin{equation}
\label{Vouteq}
T(w d(a,b,c) ) = T(w) T(a,b,c) = V T(a,b,c).
\end{equation}
Similarly,
\begin{gather}
V^2 = Kw^2 K  = K
\begin{bsmallmatrix}
\varpi&&&\\
&\varpi&&\\
&&\varpi&\\
&&&\varpi
\end{bsmallmatrix}K = T(1,1,2), \label{V2eq}\\
V^2T(a,b,c)=T(a+1, b+1, c+2) \label{inductionfacteq}
\end{gather}
for $(a,b,c)\in D_\Delta$.
We see that $V^2$ is in the center $Z(\mathcal{H})$ of $\mathcal{H}$.

If $\gamma: G \to G$ is an automorphism (anti-automorphism) such that 
$\gamma(\Delta)=\Delta$ and $\gamma(K)=K$, then the map $\mathcal{H} \to 
\mathcal{H}$ determined by $KgK \mapsto K \gamma (g) K$ for $g \in \Delta$ is a 
$\Z$-algebra automorphism (anti-automorphism).
These facts can be verified using the explicit descriptions of
the product for $\mathcal{H}$ from section~\ref{defnotationsec}.
Let $\alpha:\mathcal{H} \to
\mathcal{H}$ and $\beta:\mathcal{H} \to \mathcal{H}$ denote the automorphism 
and anti-automorphism determined by conjugation by $w$ and the 
anti-automorphism of $G$ that sends $\begin{bsmallmatrix} a&b \\ c&d 
\end{bsmallmatrix}$ to $\begin{bsmallmatrix} \transp{d} & -\transp{b} \\ 
-\transp{c} & \transp{a} \end{bsmallmatrix}$, respectively. Then 
$\alpha^2=\beta^2=\mathrm{Id}$,
\begin{equation}
\label{alphabetapropeq}
\alpha \left( T(a,b,c)\right)=T(b,a,c), \qquad \beta \left( T(a,b,c)\right)
=T(c-a,c-b,c) = T(a,b,c)
\end{equation}
for $(a,b,c)\in D_\Delta$, and
\begin{gather}
\alpha( V )=V, \qquad \alpha (X)
=X, \qquad \alpha (Y_1)=Y_2, \qquad \alpha(Y_2)=Y_1,\label{alphageneq}\\
\beta(V) = V, \qquad \beta(X) = X, \qquad \beta(Y_1) = Y_1, \qquad
\beta(Y_2)=Y_2. \label{betageneq}
\end{gather}

For $g=\begin{bsmallmatrix} a&b \\ c&d \end{bsmallmatrix}  \in \GL(2,\OF)$, $t \in \OF^\times$, and $x,y,z \in F$ we let
\begin{equation}
\label{kgdef}
k(t,g) =
\begin{bsmallmatrix}
t&&&\\
&a&&b\\
&&\det(g)t^{-1}&\\
&c&&d
\end{bsmallmatrix}
\end{equation}
and
\begin{equation}
\label{uelldef}
u(x,y,z)=
\begin{bsmallmatrix}
1&x&z&y\\
&1&y&\\
&&1&\\
&&-x&1
\end{bsmallmatrix},
\qquad
\ell(x,y,z)
=
\begin{bsmallmatrix}
1&&&\\
x&1&&\\
z&y&1&-x\\
y&&&1
\end{bsmallmatrix}.
\end{equation}
We note that the elements in \eqref{kgdef} and \eqref{uelldef}
are in $K$ and $G_1$, respectively.

\begin{lemma}
\label{divlemma}
Let $d_1,d_2,d_3,d_4,c_1,c_3 \in \Z_{\geq 0}$ and assume that $d_1+d_3=d_2+d_4$
and $c_1+c_3=1$. Let $g \in \GL(2,\OF)$ and $t \in \OF^\times$. Then
\begin{align}
&K \diag(\varpi^{c_1},1,\varpi^{c_3},\varpi)
k(t,g) \diag(\varpi^{d_1},\varpi^{d_2},\varpi^{d_3},\varpi^{d_4}) K \nonumber \\
&\qquad = K \diag(\varpi^{\min(c_1+d_1,c_3+d_3)},\varpi^{q_1},
\varpi^{\max(c_1+d_1,c_3+d_3)},\varpi^{q_2}) K \label{divlemmaeq1}
\end{align}
where
\begin{equation}
\label{divlemmaeq2}
(q_1,q_2)
\in
\begin{cases}
\{ (d_2,d_4+1),(d_2+1,d_4)\} &\text{if $d_2 \leq d_4-1$},\\
\{(d_2,d_2+1)=(d_4,d_4+1)\} & \text{if $d_2=d_4$},\\
\{(d_4,d_2+1),(d_4+1,d_2)\} & \text{if $ d_4+1 \leq d_2$}.
\end{cases}
\end{equation}
\end{lemma}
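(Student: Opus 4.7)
The plan is to exhibit explicit elements of $K$ that, acting on the left and right, transform the matrix
\[
M := \diag(\varpi^{c_1},1,\varpi^{c_3},\varpi) \, k(t,g) \, \diag(\varpi^{d_1},\varpi^{d_2},\varpi^{d_3},\varpi^{d_4})
\]
into a diagonal matrix of the shape claimed on the right-hand side of \eqref{divlemmaeq1}. First I would compute $M$ directly: since $k(t,g)$ has nonzero entries only at positions $(1,1)$, $(3,3)$ and in the $2\times 2$ sub-block at rows/columns $\{2,4\}$, and the flanking diagonals preserve this pattern, $M$ has the same shape. Its $(1,1)$- and $(3,3)$-entries are $t\varpi^{c_1+d_1}$ and $\det(g)t^{-1}\varpi^{c_3+d_3}$, and writing $g = \begin{bsmallmatrix} a & b \\ c & d \end{bsmallmatrix}$, its $\{2,4\}$ sub-block is $M_{24} = \begin{bsmallmatrix} a\varpi^{d_2} & b\varpi^{d_4} \\ c\varpi^{d_2+1} & d\varpi^{d_4+1} \end{bsmallmatrix}$.

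Next I would use three families of elements of $K$. First, $k' := \diag(t^{-1},1,t\det(g)^{-1},\det(g)^{-1})$ lies in $K$ because both paired similitude products yield $\det(g)^{-1}$; left multiplication by $k'$ normalizes the $(1,1)$ and $(3,3)$ entries to $\varpi^{c_1+d_1}$ and $\varpi^{c_3+d_3}$ and replaces $M_{24}$ by $g' \cdot \diag(\varpi^{d_2},\varpi^{d_4})$, where $g' = \begin{bsmallmatrix} a & b \\ \det(g)^{-1}c\varpi & \det(g)^{-1}d\varpi \end{bsmallmatrix} \in \Mat(2,\OF)$ satisfies $\det g' = \varpi$. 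Second, every matrix with identity $(1,3)$-subblock and an $\SL(2,\OF)$-subblock at positions $\{2,4\}$ lies in $K$ (its similitude is forced to be $1$); such elements provide a full $\SL(2,\OF) \times \SL(2,\OF)$ action on the $(2,4)$ block that leaves the $(1,3)$ block untouched. Third, $s_0 \in K$ conjugates a diagonal matrix by swapping its $(1,1)$- and $(3,3)$-entries while fixing those at $(2,2)$ and $(4,4)$.

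The key reduction is that $\SL(2,\OF) \times \SL(2,\OF)$ still suffices to put $g' \diag(\varpi^{d_2},\varpi^{d_4})$ into exact Smith form $\diag(\varpi^{q_1},\varpi^{q_2})$ with $q_1 \le q_2$ and $q_1 + q_2 = d_2 + d_4 + 1$: any $u,v \in \GL(2,\OF)$ realizing a standard Smith form satisfy $\det u \cdot \det v = 1$, and one rescales by $u \mapsto \diag(\det(u)^{-1},1)u$ and $v \mapsto v\diag(\det(u),1)$ to land in $\SL\times\SL$ without altering the diagonal output. Applying this and, if necessary, a final conjugation by $s_0$ to install $\min(c_1+d_1,c_3+d_3)$ in position $(1,1)$ yields precisely the right-hand side of \eqref{divlemmaeq1} with $q_1$ equal to the minimal valuation among the entries of $g' \diag(\varpi^{d_2},\varpi^{d_4})$, namely $\min(\val(a)+d_2,\, \val(b)+d_4,\, \val(c)+d_2+1,\, \val(d)+d_4+1)$.

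The main obstacle is the case analysis verifying that this $q_1$ always lies in the set prescribed by \eqref{divlemmaeq2}. Since $ad - bc \in \OF^\times$, at least one of $\{a,d\}$ and at least one of $\{b,c\}$ must be a unit; more precisely, if $a,d$ are not both units then $b,c$ must both be units. Splitting into the four subcases according to which of $a,d$ are units, and running each against the trichotomy $d_2 \le d_4 - 1$, $d_2 = d_4$, $d_2 \ge d_4 + 1$, one verifies directly in each case that the minimum above reduces to one of the values listed in \eqref{divlemmaeq2}. For instance, when $d_2 \le d_4 - 1$: if $a$ is a unit the minimum equals $d_2$, giving $(q_1,q_2) = (d_2,d_4+1)$; whereas if $a$ is a nonunit, then $b,c$ must be units and the minimum becomes $d_2+1$, giving $(q_1,q_2) = (d_2+1,d_4)$. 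The remaining subcases are handled similarly.
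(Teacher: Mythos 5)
Your proposal is correct and follows essentially the same route as the paper: both reduce the computation to the Smith normal form of the $2\times 2$ block $\diag(1,\varpi)\,g\,\diag(\varpi^{d_2},\varpi^{d_4})$, realize the row and column operations inside $K$ via elements of the form $k(\cdot,\cdot)$, and finish with an $s_0$-conjugation to order the $(1,1)$- and $(3,3)$-entries; you additionally spell out the unit normalizations and the case analysis behind \eqref{divlemmaeq2} that the paper leaves to the reader. (Only note: the phrase ``at least one of $\{a,d\}$ and at least one of $\{b,c\}$ must be a unit'' is false as literally stated, e.g.\ for $g=1$, but the corrected dichotomy you give immediately afterward is what your argument actually uses.)
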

\begin{proof}
If $M=\begin{bsmallmatrix} m_1&m_2 \\ m_3&m_4 \end{bsmallmatrix} \in \Mat(2,\OF) \cap \GL(2,F)$,
then there exist unique $g_1,g_2 \in \GL(2,\OF)$ and $e_1,e_2 \in \Z_{\geq 0}$ such that
$g_1M g_2 = \begin{bsmallmatrix} \varpi^{e_1} & \\ & \varpi^{e_2} \end{bsmallmatrix}$
and $e_1 \leq e_2$; moreover, $\varpi^{e_1}$ is a generator of the ideal
$(m_1,m_2,m_3,m_4)$ of $\OF$, and $\varpi^{e_2}$ is a generator of the ideal
$(\det(M) / \varpi^{e_1})$ of $\OF$ (see for example chap.~II, sec.~15 (p.~26) of \cite{Ne}).
Using this one may prove that there exist $h,h' \in \GL(2,\OF)$ such that
\begin{equation}
\label{divlemmaeq3}
h\begin{bsmallmatrix} 1\vphantom{\varpi^{d_2}}& \\ &\varpi\vphantom{\varpi^{d_2}} \end{bsmallmatrix}
g
\begin{bsmallmatrix} \varpi^{d_2} & \\ & \varpi^{d_4} \end{bsmallmatrix} h'
= \begin{bsmallmatrix} \varpi^{q_1} \vphantom{\varpi^{d_2}}& \\ & \varpi^{q_2} \vphantom{\varpi^{d_2}}\end{bsmallmatrix}
\end{equation}
with $q_1 \leq q_2$ and
\begin{equation}
\{q_1,q_2\} = \{ d_2,d_4+1\} \quad \text{or} \quad
\{q_1,q_2\} = \{d_2+1,d_4\}.
\end{equation}
We now have
\begin{align}
&K \diag(\varpi^{c_1},1,\varpi^{c_3},\varpi)
k(t,g) \diag(\varpi^{d_1},\varpi^{d_2},\varpi^{d_3},\varpi^{d_4}) K \nonumber \\
&\qquad = K \diag(\varpi^{c_1},1,\varpi^{c_3},\varpi)
k(1,g) \diag(\varpi^{d_1},\varpi^{d_2},\varpi^{d_3},\varpi^{d_4}) K \nonumber \\
&\qquad = K k(1,h) \diag(\varpi^{c_1},1,\varpi^{c_3},\varpi)
k(1,g) \diag(\varpi^{d_1},\varpi^{d_2},\varpi^{d_3},\varpi^{d_4})k(1,h')  K \nonumber \\
&\qquad = K \diag(\varpi^{\min(c_1+d_1,c_3+d_3)},\varpi^{q_1},
\varpi^{\max(c_1+d_1,c_3+d_3)},\varpi^{q_2})K. \nonumber
\end{align}
This completes the proof.
\end{proof}

\begin{lemma}
\label{gg1g2lemma}
Let $k \in K$ and $(a,b,c) \in S_\Delta$.  There exists $(\delta,(e,f,g)) \in
\{0,1\} \times \Z^3$ such that
\begin{gather}
K d(0,0,1) k d(a,b,c) K = K w^\delta d(e,f,g)K,\label{gg1g2lemmaeq1}\\
\begin{array}{l}
(\delta,(e,f,g))
\in
\{ (0,(a,b,c+1)),\ (0,(a,b+1,c+1)),\ (0,(a+1,b,c+1)),   \\
\hspace{2.3cm}  (0,(a+1,b+1,c+1)),\ (1,(a,b,c))\},\label{gg1g2lemmaeq2}
\end{array}  \\
(\delta,(e,f,g)) \in  \{0,1\} \times S_\Delta. \label{gg1g2lemmaeq100}
\end{gather}
\end{lemma}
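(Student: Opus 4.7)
The plan is to apply Lemma~\ref{BTdecomplemma} to $d(0,0,1)\,k\,d(a,b,c) \in \Delta$, which immediately produces $(\delta, (e,f,g)) \in \{0,1\} \times S_\Delta$ satisfying \eqref{gg1g2lemmaeq1} and \eqref{gg1g2lemmaeq100}. The remaining task is to show that $(\delta, (e,f,g))$ belongs to the list in \eqref{gg1g2lemmaeq2}. A first constraint comes from applying the similitude homomorphism $\lambda$ and taking valuations: since $\lambda(k)\in\OF^\times$ and $v(\lambda(d(0,0,1)))=1$, $v(\lambda(d(a,b,c)))=c$, $v(\lambda(w))=1$, one obtains $c+1=\delta+g$, so $g=c+1-\delta$ and $(\delta,g)\in\{(0,c+1),(1,c)\}$, matching the values in \eqref{gg1g2lemmaeq2}.

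To pin down $(e,f)$, I would use a parahoric decomposition of $K$ to reduce to a small number of cases for $k$. Since $s_0,s_2\in K$ commute and together with the Iwahori $I$ of \eqref{iwahorieq} generate $K$, there is a Bruhat-style decomposition $K=\bigsqcup_\sigma I\sigma I$ with $\sigma\in\{1,s_0,s_2,s_0 s_2\}$. Combined with the standard Iwahori factorization $I=I^{-}\,T^\circ\,I^{+}$, every $k\in K$ can be written as a product of these factors. The unipotent and torus factors can be absorbed into $K$ on the left (via conjugation by $d(0,0,1)$, which preserves $I^{-}$ by direct inspection of the entry ideals defining $I$) and on the right (via conjugation by $d(a,b,c)$, using $(a,b,c)\in S_\Delta$); any residual entries failing to land in $\OF$ are reabsorbed using Lemma~\ref{divlemma}.

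The final step is an explicit case analysis by $\sigma$. For $\sigma=1$ one has $d(0,0,1)\,d(a,b,c)=d(a,b,c+1)$, giving $(0,(a,b,c+1))$. For $\sigma=s_2$, the identity $d(0,0,1)s_2=s_2\cdot d(0,1,1)$ together with $s_2\in K$ yields
\[
K\,d(0,0,1)\,s_2\,d(a,b,c)\,K = K\,d(a,b+1,c+1)\,K
\]
(or $K\,d(a,b,c+1)\,K$ when $2b=c$), consistent with the two $(q_1,q_2)$ subcases in \eqref{divlemmaeq2}. The case $\sigma=s_0$ produces $(0,(a+1,b,c+1))$ by the $\alpha$-symmetry of \eqref{alphabetapropeq}, and $\sigma=s_0 s_2$ yields $(0,(a+1,b+1,c+1))$. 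The remaining case $(1,(a,b,c))$ corresponds to those $k$ whose decomposition, after manipulations using $w=-s_2 s_0\cdot u_1$ from \eqref{wu1releq}, produces a factor of $w$ on the left; the compensating decrease in $g$ from $c+1$ to $c$ is then forced by the similitude identity established above.

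The main obstacle will be the careful bookkeeping of the absorption steps and the verification that each resulting $(e,f,g)$ lies in $S_\Delta$ (not merely $D_\Delta$); this follows from the inequalities $a\le c-a$ and $b\le c-b$ defining $S_\Delta$, applied to each of the five outputs in \eqref{gg1g2lemmaeq2} to check $e\le g-e$ and $f\le g-f$.
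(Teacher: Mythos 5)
Your opening moves are fine: applying Lemma~\ref{BTdecomplemma} to $d(0,0,1)\,k\,d(a,b,c)\in\Delta$ gives \eqref{gg1g2lemmaeq1} and \eqref{gg1g2lemmaeq100} at once, and the similitude computation correctly forces $g=c+1-\delta$, so only the membership of $(e,f)$ in the list \eqref{gg1g2lemmaeq2} remains. The gap is in the middle of your argument, in two places. First, the outputs do not sort by Bruhat cell in the way you assert: which of the five cosets occurs depends on the Iwahori factors of $k$ (in particular on the $\GL(2)$-Levi block, via the two alternatives in \eqref{divlemmaeq2}), not only on $\sigma$; and the claim that $\sigma=s_0$ ``produces $(0,(a+1,b,c+1))$ by $\alpha$-symmetry'' is incorrect, since conjugation by $s_0$ replaces $d(a,b,c)$ by $d(c-a,b,c)$ rather than $d(b,a,c)$, and the $s_0$-cell is precisely where the coset $Kw\,d(a,b,c)K$ can appear.

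Second, and more seriously, your mechanism for the ``residual entries failing to land in $\OF$'' does not exist. After conjugating the upper-unipotent part of $k$ across $d(0,0,1)$ one obtains factors of the form $u(x,y\varpi^{-1},z\varpi^{-1})$. The $z$-entry is harmless ($K$ tolerates $\p^{-1}$ in the $(1,3)$ slot) and the Levi block is handled by Lemma~\ref{divlemma}, but when $\sigma$ involves $s_0$ a factor $u(0,y_2\varpi^{-1},0)$ with $y_2\in\OF^\times$ survives, and Lemma~\ref{divlemma} says nothing about it: that lemma only treats the block $k(t,g)$ sandwiched between diagonals. This surviving unipotent is the entire content of the fifth case. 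The paper disposes of it with the identities \eqref{gg1g2lemmaeq95} and \eqref{gg1g2lemmaeq12}, which rewrite $u(0,y,0)$ and $\ell(0,y,0)$ for $y\in F^\times$ as products involving $w$; this is the only way $w$ can enter, since $u_1\notin K$ and hence $w$ cannot come out of your decomposition $K=\bigsqcup_\sigma I\sigma I$. Your proposal has no substitute for these identities, nor for the further unit-versus-non-unit subcase (on $y_3\varpi^{b-a}$ in the paper's notation) that separates the output $(1,(a,b,c))$ from $(0,(a+1,b,c+1))$. For comparison, the paper also first reduces to $b\ge a$ by $w$-conjugation and uses the coarser decomposition $K=\mathrm{Kl}(\p)s_0\sqcup\bigsqcup_{v}\mathrm{Kl}(\p)u(0,0,v\varpi^{-1})$ of \eqref{gg1g2lemmaeq3}, which isolates the problematic $s_0$ from the start; some such device, together with the $w$-producing identities, is what your argument is missing.
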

\begin{proof}
We first assume that $b\geq a$. Assume that $2a=c$. Then since
$2a \leq 2b \leq c$ we have $2a=2b=c$ so that $d(a,b,c)=
\mathrm{diag}(a,a,a,a)$. Therefore,
$$
K d(0,0,1)kd(a,b,c)K = K d(a,a,2a+1) K = Kd(a,b,c+1)K.
$$
This verifies the lemma in this case. Assume that $2a<c$.
We will first prove that there exists $(\delta,(e,f,g)) \in
\{0,1\} \times \Z^3$ such that \eqref{gg1g2lemmaeq1} and \eqref{gg1g2lemmaeq2}
hold.
Let
\begin{equation}
\label{gg1g2lemmaeq2p5}
h=d(0,0,1) k d(a,b,c).
\end{equation}
By Lemma 3.3.1 of \cite{RS} there is a disjoint decomposition
\begin{equation}
\label{gg1g2lemmaeq3}
K = \mathrm{Kl}(\p) s_0 \sqcup \bigsqcup_{v \in \OF/\p} \mathrm{Kl}(\p) u(0,0,v \varpi^{-1}).
\end{equation}
Here, $\mathrm{Kl}(\p)$ is the Klingen congruence subgroup
$$
\mathrm{Kl}(\p) = \GSp(4,\OF) \cap
\begin{bsmallmatrix}
\OF & \OF & \OF & \OF\\
\p & \OF & \OF & \OF\\
\p & \p & \OF & \p \\
\p & \OF & \OF & \OF
\end{bsmallmatrix}.
$$
Assume first that $k$ is in the union on the right of \eqref{gg1g2lemmaeq3}.
Since  $\mathrm{Kl}(\p)$ has an
Iwahori decomposition there exist $x_1,y_1,z_1,x_2,y_2,z_2 \in \OF$, $t \in \OF^\times$,
and $g \in \GL(2,\OF)$
such that
\begin{equation}
\label{gg1g2lemmaeq4}
k = \ell(x_1\varpi,y_1\varpi,z_1\varpi) k(t,g) u(x_2,y_2,z_2\varpi^{-1}).
\end{equation}
Hence,
\begin{align}
&KhK
= K d(0,0,1)  \ell(x_1\varpi,y_1\varpi,z_1\varpi) k(t,g) u(x_2,y_2,z_2\varpi^{-1})
d(a,b,c) K \nonumber \\
&\qquad= K   \ell(x_1\varpi,y_1\varpi^2,z_1\varpi^2) d(0,0,1)k(t,g) d(a,b,c)\nonumber \\
&\qquad \quad \times u(x_2\varpi^{b-a},y_2\varpi^{c-a-b},z_2\varpi^{c-2a-1})  K\nonumber \\
&\qquad= K   d(0,0,1)k(t,g) d(a,b,c)  K. \label{gg1g2lemmaeq5}
\end{align}
For the last equality we used that $b-a \geq 0$, $c-a-b\geq 0$, and $c-2a-1 \geq -1$.
Applying  Lemma \ref{divlemma} to \eqref{gg1g2lemmaeq5} we conclude that
$$
KhK = K d(a,b,c+1)K \quad \text{or}\quad KhK = Kd(a,b+1,c+1)K.
$$
This verifies that there exists $(\delta,(e,f,g)) \in
\{0,1\} \times \Z^3$ such that \eqref{gg1g2lemmaeq1} and \eqref{gg1g2lemmaeq2}
hold in this case. Now assume that $k$
is in $\mathrm{Kl}(\p) s_0$. Let $k = k's_0$ where $k' \in \mathrm{Kl}(\p)$. Since $s_0
\in K$ we have
\begin{equation}
\label{gg1g2lemmaeq6}
K h K = K d(0,0,1)k' d(c-a,b,c) K.
\end{equation}
Since  $\mathrm{Kl}(\p)$ has an
Iwahori decomposition there exist $x_1,y_1,z_1,x_2,y_2,z_2 \in \OF$, $t \in \OF^\times$,
and $g \in \GL(2,\OF)$ such that
\begin{equation}
\label{gg1g2lemmaeq7}
k' = \ell(x_1\varpi,y_1\varpi,z_1\varpi) u(x_2,0,0)u(0,y_2,z_2) k(t,g).
\end{equation}
By \eqref{gg1g2lemmaeq6} and \eqref{gg1g2lemmaeq7},
\begin{align}
&K h K = K d(0,0,1) \ell(x_1\varpi,y_1\varpi,z_1\varpi) u(x_2,0,0)u(0,y_2,z_2) k(t,g) d(c-a,b,c) K\nonumber \\
&\qquad = K  \ell(x_1\varpi,y_1\varpi^2,z_1\varpi^2) d(0,0,1) u(x_2,0,0)u(0,y_2,z_2) k(t,g) d(c-a,b,c) K\nonumber \\
&\qquad = K   d(0,0,1) u(x_2,0,0)u(0,y_2,z_2) k(t,g) d(c-a,b,c) K\nonumber \\
&\qquad = K    u(x_2,0,0)d(0,0,1)u(0,y_2,z_2) k(t,g) d(c-a,b,c) K\nonumber \\
&\qquad = K   u(0,y_2\varpi^{-1},z_2\varpi^{-1})d(0,0,1) k(t,g) d(c-a,b,c) K\nonumber \\
&\qquad = K   u(0,y_2\varpi^{-1},0)d(0,0,1) k(t,g) d(c-a,b,c) K. \label{gg1g2lemmaeq8}
\end{align}
Assume that $y_2 \in \p$. Then from \eqref{gg1g2lemmaeq8} we have
\begin{equation}
\label{gg1g2lemmaeq9}
KhK= K   d(0,0,1) k(t,g) d(c-a,b,c) K.
\end{equation}
Applying  Lemma \ref{divlemma} to \eqref{gg1g2lemmaeq9} we conclude that
$$
KhK = K d(a+1,b,c+1)K \quad \text{or}\quad KhK = Kd(a+1,b+1,c+1)K.
$$
This verifies that there exists $(\delta,(e,f,g)) \in
\{0,1\} \times \Z^3$ such that \eqref{gg1g2lemmaeq1} and \eqref{gg1g2lemmaeq2}
hold in this case. Assume that $y_2 \in \OF^\times$.
Using that
\begin{align}
\label{gg1g2lemmaeq95}
u(0,y,0)=\ell(0,y^{-1},0)ws_0s_2 \mathrm{diag}(-(y\varpi )^{-1},(y\varpi)^{-1},-y,y)
\ell(0,y^{-1},0),
\end{align}
which holds for all $y \in F^\times$,
and that $y_2 \in \OF^\times$ and $s_0,s_2 \in K$, we find that by \eqref{gg1g2lemmaeq8}
\begin{align}
&K h K
= K \ell(0,y_2^{-1}\varpi,0)ws_0s_2\mathrm{diag}(-y_2^{-1},y_2^{-1},-y_2\varpi^{-1},y_2\varpi^{-1})
\ell(0,y_2^{-1}\varpi,0)\nonumber \\
&\qquad \quad \times d(0,0,1) k(t,g) d(c-a,b,c) K \nonumber \\
&\qquad = w K d(0,0,-1)
\ell(0,y_2^{-1}\varpi,0) d(0,0,1) k(t,g) d(c-a,b,c) K\nonumber \\
&\qquad = w K  \ell(0,y_2^{-1},0) k(t,g) d(c-a,b,c) K. \label{gg1g2lemmaeq10}
\end{align}
There exist $x_3,y_3,z_3 \in \OF$  such that
$$
\ell(0,y_2^{-1},0) k(t,g)  =
k(t,g) \ell(0,y_3,0)\ell(x_3,0,0)\ell(0,0,z_3).
$$
From \eqref{gg1g2lemmaeq10} we thus obtain
\begin{align}
KhK &=  w K  k(t,g) \ell(0,y_3,0)\ell(x_3,0,0)\ell(0,0,z_3) d(c-a,b,c) K \nonumber \\
&=  w K  d(c-a,b,c)\ell(0,y_3 \varpi^{b-a}, 0) \ell(x_3 \varpi^{c-a-b},0,0)
\ell(0,0, z_3 \varpi^{c-2a})   K \nonumber \\
&=  w K  d(c-a,b,c)\ell(0,y_3 \varpi^{b-a}, 0)  K. \label{gg1g2lemmaeq11}
\end{align}
For the last equality we used $c-a-b>0$  and $c-2a>0$. We have $y_3\varpi^{b-a} \in \OF$
since $b \geq a$. Assume that $y_3\varpi^{b-a} \in \p$. Then $\ell(0,y_3 \varpi^{b-a}, 0)
\in K$ and
\begin{align*}
K h K
& = w K d(c-a,b,c) K \\
& = w K s_0 d(c-a,b,c) s_0^{-1} K \\
& = w K d(a,b,c) K \\
& = K w d(a,b,c) K.
\end{align*}
This verifies that there exists $(\delta,(e,f,g)) \in
\{0,1\} \times \Z^3$ such that \eqref{gg1g2lemmaeq1} and \eqref{gg1g2lemmaeq2}
hold in this case. Assume that $y_3\varpi^{b-a} \in \OF^\times$.
Since $b \geq a$ we have $a=b$ so that $y_3\varpi^{b-a}=y_3$.
Using that
\begin{equation}
\label{gg1g2lemmaeq12}
\ell(0,y,0)
= u(0,y^{-1},0)
ws_0s_2 \mathrm{diag} (y \varpi^{-1}, -y \varpi^{-1}, y^{-1}, -y^{-1}) u(0,y^{-1},0),
\end{equation}
which holds for all $y \in F^\times$, we find from \eqref{gg1g2lemmaeq10} that:
\begin{align*}
K h K
&=  w K  d(c-a,b,c) u(0,y_3^{-1},0)
ws_0s_2 \\
&\quad \times \mathrm{diag} (y_3 \varpi^{-1}, -y_3 \varpi^{-1}, y_3^{-1}, -y_3^{-1}) u(0,y_3^{-1},0)  K\\
&=  w K  u(0,y_3^{-1},0) d(c-a,b,c)
ws_0s_2  \mathrm{diag} (\varpi^{-1}, \varpi^{-1}, 1, 1)   K\\
&=  w K d(c-a,b,c)
ws_0s_2  \mathrm{diag} (\varpi^{-1}, \varpi^{-1}, 1, 1)   K\\
&=  w K ws_0s_2  d(c-b,a,c)\mathrm{diag} (\varpi^{-1}, \varpi^{-1}, 1, 1)   K\\
&=  w^2 K  d(c-b,a,c)\mathrm{diag} (\varpi^{-1}, \varpi^{-1}, 1, 1)   K\\
&=  \mathrm{diag}(\varpi,\varpi,\varpi,\varpi) K  d(c-b,a,c)\mathrm{diag} (\varpi^{-1}, \varpi^{-1}, 1, 1)   K\\
& = K \mathrm{diag}(\varpi^{c-b},\varpi^a,\varpi^{b+1},\varpi^{c-a+1}) K\\
& = K s_0\mathrm{diag}(\varpi^{c-b},\varpi^a,\varpi^{b+1},\varpi^{c-a+1}) s_0^{-1}K\\
& = K d(b+1,a,c+1) K\\
& = K d(a+1,b,c+1) K,
\end{align*}
where for the last step we used that $a=b$.
This completes the verification that there exists $(\delta,(e,f,g)) \in
\{0,1\} \times \Z^3$ such that \eqref{gg1g2lemmaeq1} and \eqref{gg1g2lemmaeq2}
hold in the case $b \geq a$. Still under the assumption that $b \geq a$,
we will now prove that there exists $(\delta,(e,f,g)) \in \{0,1\} \times \Z^3$
such that \eqref{gg1g2lemmaeq1}, \eqref{gg1g2lemmaeq2}, and \eqref{gg1g2lemmaeq100}
hold. We have already proven that there exists $(\delta,(e,f,g)) \in \{0,1\} \times \Z^3$
such that \eqref{gg1g2lemmaeq1} and \eqref{gg1g2lemmaeq2} hold. If \eqref{gg1g2lemmaeq100}
holds, then there is nothing more to prove; assume that $(\delta,(e,f,g)) \notin \{\pm 1\} \times S_\Delta$.
From the definition of $S_\Delta$ we see that necessarily $\delta =0$ and
$(e,f,g) \in \{(a,b+1,c+1),(a+1,b,c+1),(a+1,b+1,c+1)\}$. Assume that $(e,f,g)=
(a,b+1,c+1)$. Since $(e,f,g) \notin S_\Delta$ and $(a,b,c) \in S_\Delta$ we have
$b=c-b$. We now have
\begin{align*}
KhK & = K w^\delta d(e,f,g) K \\
& = K d(a,b+1,c+1) K \\
& = K s_2 d(a,b+1,c+1) s_2^{-1} K \\
& = K d(a,(c+1)-(b+1),c+1) K \\
& = K d(a,b,c+1)K.
\end{align*}
Let $(\delta',(e',f',g')) = (0,(a,b,c+1))$. Then \eqref{gg1g2lemmaeq1}, \eqref{gg1g2lemmaeq2}, and \eqref{gg1g2lemmaeq100} still
hold after replacing $(\delta,(e,f,g))$ by $(\delta',(e',f',g'))$.
Similar arguments deal with the cases $(e,f,g)=(a+1,b,c+1)$ and $(e,f,g) =
(a+1,b+1,c+1)$. This completes the proof of the lemma in the case $b \geq a$.

Now assume that $a>b$. Then $w^{-1}kw \in K$ and $(b,a,c) \in S_\Delta$; by the previous
case applied to $w^{-1}kw$ and $(b,a,c)$, there exists
$(\delta,(f,e,g)) \in \{0,1\} \times S_\Delta$ such that
\begin{gather}
K d(0,0,1) w^{-1}kw d(b,a,c) K = K w^\delta d(f,e,g)K,\label{gg1g2lemmaeqB1}\\
\begin{array}{l}
(\delta,(f,e,g))
\in
\{ (0,(b,a,c+1)),\ (0,(b,a+1,c+1)),\ (0,(b+1,a,c+1)),   \\
\hspace{2.3cm}  (0,(b+1,a+1,c+1)),\ (1,(b,a,c))\},\label{gg1g2lemmaeqB2}
\end{array}  \\
(\delta,(f,e,g)) \in  \{0,1\} \times S_\Delta. \label{gg1g2lemmaeqB100}
\end{gather}
Conjugating \eqref{gg1g2lemmaeqB1} by $w$, and using that $w$ normalizes $K$,
we obtain \eqref{gg1g2lemmaeq1}. Since \eqref{gg1g2lemmaeqB2} is equivalent
to \eqref{gg1g2lemmaeq2}, and since \eqref{gg1g2lemmaeqB100}
is equivalent to \eqref{gg1g2lemmaeq100}, the proof is complete.
\end{proof}

\begin{theorem} 
\label{degree1multiplicationthm}
Let $(a,b,c) \in S_\Delta$. There exist unique  integers
$n_1$, $n_2$, $n_3$, $n_4$, and $n_5$ such that
\begin{align}
&T(0,0,1)T(a,b,c)=
n_1 T(a,b,c+1)+n_2 T(a,b+1,c+1)\nonumber \\
&\qquad +n_3T(a+1,b,c+1)+n_4T(a+1,b+1,c+1)
+n_5 T(w)T(a,b,c) \label{degree1multiplicationthmeq1000}
\end{align}
and
\begin{equation}
\text{for $1 \leq i \leq 5$, $n_i  =0$ if $(e,f,g) \notin S_\Delta$,}
\end{equation}
where $(e,f,g)$ is the triple of integers in the $i$-th summand in
\eqref{degree1multiplicationthmeq1000}.
The integers $n_i$ are  as in the following table:
$$
\renewcommand{\arraystretch}{1}
\begin{array}{cllllll}
\toprule
\multicolumn{2}{c}{\mathrm{conditions}} & n_1 & n_2 & n_3 & n_4 & n_5 \\
\midrule
b<a & a=c-a  &1 & q^2&0 &0 &q^2-1\\
&a+1 = c-a &1 & q^2&q+1 &q^3+q^2 &q^2-1\\
&a+2 \leq c-a &1 &q^2 & q&q^3 &q^2-1\\
\midrule
b=a & b = c-b &1 &0 & 0&0 & 0\\
 & b+1 = c-b &1 &q+1 &q+1 & q^3+2q^2+q&q-1 \\
 & b+2 \leq c-b &1 &q &q &q^3 &q-1 \\
\midrule
a<b & b=c-b &1 &0 &q^2 & 0&q^2-1 \\
& b+1=c-b &1 &q+1 &q^2 &q^3+q^2 &q^2-1 \\
& b+2 \leq c-b &1 &q &q^2 & q^3&q^2-1 \\
\bottomrule
\end{array}
$$
\end{theorem}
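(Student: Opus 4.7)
The plan is to apply the multiplicity formula \eqref{doublemulteq}: if $K d(0,0,1) K = \bigsqcup_i h_i K$ is a disjoint left $K$-coset decomposition and $C = K g K$, then the coefficient of $C$ in $T(0,0,1) T(a,b,c)$ equals $\#\{i : h_i^{-1} g \in K d(a,b,c) K\}$. Existence of the $n_i$ is immediate from Lemma \ref{gg1g2lemma}, which shows that the product lies in the $\Z$-span of the five double cosets on the right-hand side of \eqref{degree1multiplicationthmeq1000}, and the uniqueness assertion (under the constraint that $n_i = 0$ whenever the corresponding triple lies outside $S_\Delta$) follows from the uniqueness part of Lemma \ref{BTdecomplemma}. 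The substantive task is computing the table entries.

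For this I would first produce explicit left $K$-coset representatives for $K d(0,0,1) K$ by combining the decomposition
\[
K = \Klingen(\p) s_0 \sqcup \bigsqcup_{v \in \OF/\p} \Klingen(\p) u(0,0,v\varpi^{-1})
\]
(from the proof of Lemma \ref{gg1g2lemma}) with the Iwahori decomposition of $\Klingen(\p)$, absorbing through $d(0,0,1)$ whatever can be absorbed as in that proof. This yields representatives $h_i = d(0,0,1) k_i$ parameterized by finite residue-field families. For each $h_i$ I would reuse the branch analysis of the proof of Lemma \ref{gg1g2lemma} to identify the output double coset $K h_i d(a,b,c) K$ among the five possibilities, and tally the contribution to each $n_i$. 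The symmetry $\alpha$ of \eqref{alphabetapropeq} (which swaps $a$ with $b$ and exchanges the $n_2$ and $n_3$ columns of the table) reduces the $a < b$ rows to the $b < a$ rows.

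The case analysis splits by the sign of $b - a$ and by the relative positions of $a$ vs.\ $c - a$ and $b$ vs.\ $c - b$, yielding the nine rows. In the generic interior (for example, $b < a$ with $a + 2 \leq c - a$), the five candidate triples all lie in $S_\Delta$ and correspond to distinct double cosets, so the counts are disjoint and each $n_i$ is a single residue-field cardinality. On each face of $S_\Delta$ (conditions such as $a = c - a$, $b = c - b$, $a + 1 = c - a$, or $a = b$), some candidate triples fall outside $S_\Delta$ and must be re-identified with $S_\Delta$-valid triples via $K d(a,b,c) K = K d(c-a,b,c) K = K d(a,c-b,c) K$ (from $s_0, s_2 \in K$); these identifications merge branches, producing the composite entries such as $q^3 + q^2$ and $q^3 + 2q^2 + q$ in the table.

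The main obstacle is the bookkeeping when multiple faces are imposed simultaneously (for instance, $a = b$ together with $a + 1 = c - a$): several branches can collapse onto a single $S_\Delta$ representative, and it is easy either to double-count or to miss one. A useful row-by-row sanity check is the convolution identity
\[
\sum_i n_i \cdot |K g_i K / K| = |K d(0,0,1) K / K| \cdot |K d(a,b,c) K / K|,
\]
where $g_i$ is a chosen representative of the $i$-th output double coset; verifying this in each row is a strong guard against miscounts.
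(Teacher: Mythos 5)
Your proposal is correct and follows essentially the same route as the paper: existence and the constrained uniqueness come from Lemma \ref{gg1g2lemma} together with Lemma \ref{BTdecomplemma}, and the table entries are computed via \eqref{doublemulteq2} applied to an explicit left coset decomposition of $Kd(0,0,1)K$ (the paper simply cites Lemma~6.1.2 of \cite{RS} for the four parameterized families of representatives rather than re-deriving them from the Klingen and Iwahori decompositions), with the symmetry $\alpha$ used to cut down the case work. One small slip worth noting: left coset representatives in $Kd(0,0,1)K=\sqcup_i h_iK$ have the form $h_i=k_i\,d(0,0,1)$ (equivalently, unipotent-times-diagonal as in the families $r_j$), not $h_i=d(0,0,1)k_i$, since $\bigcup_{k\in K}d(0,0,1)kK$ is only the single coset $d(0,0,1)K$.
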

\begin{proof} Let
\begin{align*}
A &= \left(\{0,1\} \times S_\Delta\right) \cap
\{ (0,(a,b,c+1)),\ (0,(a,b+1,c+1)),   \\
&\quad \ (0,(a+1,b,c+1)),\ (0,(a+1,b+1,c+1)),\ (1,(a,b,c))\}.
\end{align*}
By Lemma \ref{BTdecomplemma}, the double cosets
$K w^\delta d(e,f,g) K$ for $(\delta,(e,f,g)) \in A$
are mutually distinct.
By \eqref{c1c2prodeq} we have
\begin{equation}
\label{degree1multiplicationthmeq1}
T(0,0,1)T(a,b,c)
=
\sum\limits_{C \in K \backslash \Delta / K} m(C) C,
\end{equation}
where we abbreviate $m(C) =m(d(0,0,1),d(a,b,c); C)$.
Let $C \in K \backslash \Delta / K$ be
such that $m(C) \neq 0$, and let $g \in \Delta$
be such that $C=KgK$. By \eqref{gg1g2eq} we have $g \in K d(0,0,1)Kd(a,b,c)K$.
Hence, there exists $k \in K$ such that $g \in Kd(0,0,1)kd(a,b,c)K$, i.e.,
$C=Kd(0,0,1)kd(a,b,c)K$. By Lemma \ref{gg1g2lemma}, there exists
$(\delta,(e,f,g)) \in A$ such that $C = K w^\delta d(e,f,g) K$. It  follows that
\begin{align}
&T(0,0,1)T(a,b,c) =
\sum\limits_{(\delta, (e,f,g)) \in A} m\left( K w^\delta d(e,f,g)K \right)\cdot K w^\delta d(e,f,g)K.
\label{degree1multiplicationthmeq2}
\end{align}
This proves the existence
of the $n_i$ as in the statement of the theorem (note that we also use \eqref{Vouteq}).
The uniqueness of the $n_i$ follows from the fact that the
$Kw^\delta d(e,f,g) K$ for $(\delta,(a,b,c)) \in \{0,1\} \times S_\Delta$
form a  $\Z$-basis for $\mathcal{H}$ (recall the definition
of $\mathcal{H}$ and see Lemma \ref{BTdecomplemma}).

Next, we calculate the $n_i$.
We begin with some observations.
First, if $b=a$ and $b=c-b$, then $T(a,b,c)=
\mathrm{diag}(\varpi^a,\varpi^a,\varpi^a,\varpi^a)$
and $T(0,0,1)T(a,b,c)=T(a,b,c+1)$ by Lemma \ref{normalizerlemma};
thus, for this case, we have $n_1=1$ and $n_2=n_3=n_4=n_5=0$.
We may thus assume that $b \neq a$ or $b< c-b$.
Second, by the first paragraph, if $(e,f,g)$ is the triple
in the $i$-th summand in \eqref{degree1multiplicationthmeq1000}
and $(e,f,g) \notin S_\Delta$, then $n_i=0$: this accounts
for the remaining zero entries in the above table. Third,
by applying $\alpha$ and using \eqref{alphabetapropeq}
and \eqref{alphageneq}, the values for $n_3$ can be obtained
from those for $n_2$.

With these observations in place, we will explain how to calculate
the remaining $n_i$. To do this we will use \eqref{doublemulteq2}
with $g_1=d(0,0,1)$ and $g_2=d(a,b,c)$ and the left coset
decomposition $Kg_1K= \sqcup_{i \in I} h_i K$ from Lemma~6.1.2
of \cite{RS}. In this coset decomposition the representatives
are organized into four parameterized families as follows:
\begin{gather*}
r_1 = r_1(x,y,z)=\begin{bsmallmatrix}1&&z\varpi^{-1}&y\\&1&y&x\\&&1&\\&&&1\end{bsmallmatrix}d(1,1,1),\qquad
r_2= r_2(x,z)=\begin{bsmallmatrix}1&x&z\varpi^{-1}&\\&1&&\\&&1&\\&&-x&1\end{bsmallmatrix}d(1,0,1),\\
r_3=r_3(x,y)=s_0\begin{bsmallmatrix}1&&&y\\&1&y&x\\&&1&\\&&&1\end{bsmallmatrix}d(1,1,1),\qquad  r_4=r_4(x)=s_0\begin{bsmallmatrix}1&x&&\\&1&&\\&&1&\\&&-x&1\end{bsmallmatrix}d(1,0,1),
\end{gather*}
where $x,y,z\in\OF/\p$. Rather than recounting the details for the use
of \eqref{doublemulteq2} in every instance we will instead describe
two illustrative cases.

Calculation of $n_1$ when $b<a$ and $a+2\leq c-a$. Let $g = d(a,b,c+1)$.
Then arguments by contradiction, via explicit calculations, show that
there exist no $k \in K$ such that $r_j^{-1} gkg_1^{-1} \in K$
for $j=1,2,3,$ and $4$ except when $j=4$ and $r_4=r_4(0,0,0)$,
when we have $r_4^{-1} g \cdot 1 \cdot g_2^{-1}=-s_0 \in K$. Thus, $n_1=1$.

Calculation of $n_4$ when $b=a$ and $b+1=c-b$. Let $g=d(a+1,b+1,c+1)$.
For any $x,y,z \in \OF$ we have $r_1^{-1}gkg_2^{-1} =1 \in K$
where
$$
k =
\begin{bsmallmatrix}
1&&z\varpi^{-1} & y \\
&1&y&x\\
&&1&\\
&&&1
\end{bsmallmatrix} \in K;
$$
hence, by \eqref{doublemulteq2}, the $r_1$ family contributes $q^3$ to $n_4$.
Next, for any $x,z \in \OF$ we have $r_2^{-1} gkg_2^{-1} =s_2 \in K$ for
$$
k =
s_2
\begin{bsmallmatrix}
1&& z \varpi^{-1} & x \\
&1&x&\\
&&1&\\
&&&1
\end{bsmallmatrix} \in K;
$$
hence, the $r_2$ contributes $q^2$ to $n_4$. And for any $x,y \in \OF$
we have $r_3^{-1} g k g_2^{-1}=1 \in K$ for
$$
k =
s_0
\begin{bsmallmatrix}
1&&&y\\
&1&y&x\\
&&1&\\
&&&1
\end{bsmallmatrix} \in K;
$$
thus, the $r_3$ contributes $q^2$ to $n_4$. Finally, for any $x \in \OF$
we have $r_4^{-1} g k g_2^{-1} =s_2 \in K$ for
$$
k = s_0 s_2
\begin{bsmallmatrix}
1&&&x \\
&1&x&\\
&&1&\\
&&&1
\end{bsmallmatrix} \in K;
$$
hence, the $r_4$ family contributes $q$ to $n_4$. Adding the contributions, we
find that
$n_4=q^3+2q^2+q$.
\end{proof}

\begin{theorem} 
\label{degree2multiplicationthm}
Let $(a,b,c) \in S_\Delta$.
There exist unique  integers
$m_1$, $m_2$, $m_3$, $m_4$, $m_5$, and $m_6$ such that
\begin{align}
&T(0,1,2)T(a,b,c) = m_1 T(a,b+1,c+2)+m_2 T(a+1,b+1,c+2)\nonumber \\
&\qquad+ m_3 T(a+2,b+1,c+2) +m_4 T(w) T(a,b,c+1)\nonumber \\
&\qquad+ m_5 T(w) T(a+1,b,c+1) +m_6 T(w) T(a+1,b+1,c+1)\label{degree2multiplicationthmeq1}
\end{align}
and
\begin{equation}
\label{degree2multiplicationthmeq2}
\text{for $1 \leq i \leq 6$, $m_i  =0$ if $(e,f,g) \notin S_\Delta$,}
\end{equation}
where $(e,f,g)$ is the triple of integers occuring in the $i$-th summand of
\eqref{degree2multiplicationthmeq1}.
The  integers $m_i$  are  as in the following table:
\begin{gather*}
\renewcommand{\arraystretch}{1}
\begin{array}{clrrrrrr}
\toprule
\multicolumn{2}{c}{\mathrm{conditions}} &m_1&m_2&m_3&m_4&m_5&m_6\\
\midrule
b<a& a=c-a  & q&0 &0 &q-1 &0 &0\\
& a+1=c-a    &q &q^2 &0 &q-1 & q^2-1&0\\
&a+2 =c-a    &q & q^2-q&q^3+q^2 &q-1 &q^2-q &0\\
&a+3 \leq c-a    &q & q^2-q&q^3 &q-1 & q^2-q&0\\
\midrule
b=a& a=c-a  & 1& 0&0 &0 &0 &0\\
&a+1=c-a    & 1& q^2&0 &0 &q^2-1 &0\\
&a+2 =c-a    & 1&q^2-q &q^3+q^2 &0 &q^2-q &0\\
&a+3 \leq c-a    &1 &q^2-q &q^3 &0 &q^2-q &0\\
\midrule
a<b& b=c-b  &1 &q^3-q^2 &q^4+q^3 &0 &q^3-q^2 &0\\
\mathrm{and}&b+1=c-b    &1 &q^3-q^2 &q^4+q^3  &0 &q^3-q^2  &q^4-q^2\\
a+2=c-a&b+2 \leq c-b   & 1&q^3-q^2 & q^4+q^3 & 0&q^3-q^2  &q^4-q^3\\
\midrule
a<b& b=c-b &1 &q^3-q^2 &q^4 &0 &q^3-q^2  &0\\
\mathrm{and}&b+1=c-b   &1 &q^3-q^2 &q^4 &0 &q^3-q^2  &q^4-q^2\\
a+2<c-a&b+2 \leq c-b   & 1&q^3-q^2 &q^4 &0 &q^3-q^2  &q^4-q^3\\
\bottomrule
\end{array}
\end{gather*}
\end{theorem}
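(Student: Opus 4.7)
My proof plan for Theorem \ref{degree2multiplicationthm} follows the two-stage strategy used in the proof of Theorem \ref{degree1multiplicationthm}: first show that $T(0,1,2)T(a,b,c)$ is supported on at most six explicit double cosets, then compute the six coefficients.

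For the first stage, I would prove an analog of Lemma \ref{gg1g2lemma} with $d(0,0,1)$ replaced by $d(0,1,2)$: for every $k \in K$ and every $(a,b,c) \in S_\Delta$, there exist $\delta \in \{0,1\}$ and $(e,f,g) \in S_\Delta$ such that $Kd(0,1,2)kd(a,b,c)K = Kw^\delta d(e,f,g)K$, with $(\delta,(e,f,g))$ lying in the six-element set of triples corresponding to the summands on the right-hand side of \eqref{degree2multiplicationthmeq1}. The proof would follow the same template as Lemma \ref{gg1g2lemma}: decompose $K$ into $\mathrm{Kl}(\p)$-cosets via \eqref{gg1g2lemmaeq3}; write $k$ in Iwahori form inside the Klingen subgroup; push the lower-triangular factor through $d(0,1,2)$ on the left and the upper-triangular factor through $d(a,b,c)$ on the right, using $(a,b,c)\in S_\Delta$ to guarantee that the exponents $b-a$, $c-a-b$, $c-2a$ (or their analogs with $a$ and $b$ swapped) are non-negative. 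The key technical input is a mild extension of Lemma \ref{divlemma} adapted to the degree-two diagonal $\mathrm{diag}(1,\varpi,\varpi^2,\varpi)$ in place of $\mathrm{diag}(\varpi^{c_1},1,\varpi^{c_3},\varpi)$; this is a routine Smith normal form computation. The $s_0$-coset case of \eqref{gg1g2lemmaeq3} would, as in Lemma \ref{gg1g2lemma}, require invoking identities such as \eqref{gg1g2lemmaeq95} and \eqref{gg1g2lemmaeq12} to deal with an auxiliary unit parameter; these are precisely the subcases producing a $\delta=1$ (Atkin-Lehner) coset.

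Once existence is in hand, the uniqueness of the $m_i$ is immediate from Lemma \ref{BTdecomplemma}, which supplies a $\Z$-basis of $\mathcal{H}$ indexed by $\{0,1\}\times S_\Delta$. To compute each $m_i$, I would apply \eqref{doublemulteq2} with $g_1=d(0,1,2)$ and $g_2=d(a,b,c)$; this requires an explicit left $K$-coset decomposition $Kd(0,1,2)K=\bigsqcup_{j\in J}h_jK$, derived by adapting the methods of Lemma 6.1.2 of \cite{RS} from the degree-one to the degree-two setting. For each admissible $(\delta,(e,f,g))$, the coefficient $m_i$ is then the number of representatives $h_j$ for which there exists $k\in K$ with $h_j^{-1} w^\delta d(e,f,g) k d(a,b,c)^{-1}\in K$. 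The twelve rows of the table arise from partitioning $S_\Delta$ according to the ordering between $a$ and $b$ and to the boundary conditions $a=c-a$, $b=c-b$, $a+2=c-a$, and so on. The trivial subcase $a=b=c-b$, in which $d(a,b,c)$ is scalar, follows immediately from Lemma \ref{normalizerlemma}, and applying the involution $\alpha$ together with \eqref{alphabetapropeq} and \eqref{alphageneq} allows consistency checks across cases, though it does not reduce the number of independent computations.

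The main obstacle will be the bookkeeping. Twelve parameter regions times six coefficients means over seventy entries to verify, and the presence of $\delta=1$ summands in three of the six terms makes the analysis substantially more intricate than for Theorem \ref{degree1multiplicationthm}. The precise boundary between contributions landing in a $\delta=0$ versus a $\delta=1$ coset is governed by whether a certain entry arising from the Klingen-Iwahori decomposition lies in $\OF^\times$ or in $\p$, and tracking these contributions carefully across all twelve rows is the core challenge. Following the style of Theorem \ref{degree1multiplicationthm}, I would present one or two illustrative case computations in detail and leave the remaining cases to analogous bookkeeping.
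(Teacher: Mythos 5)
Your proposal is workable in outline, but your first stage takes a genuinely different route from the paper, and it is worth knowing what the paper does instead. You propose to prove a direct analog of Lemma \ref{gg1g2lemma} with $d(0,0,1)$ replaced by $d(0,1,2)$, via an extended Lemma \ref{divlemma} and a fresh Iwahori-type case analysis. The paper avoids this entirely: after disposing of the scalar case $c=a+b$ via Lemma \ref{normalizerlemma} (as you do), it observes that Theorem \ref{degree1multiplicationthm} gives $T(0,0,1)^2 = T(0,0,2)+(q+1)T(0,1,2)+(q+1)T(1,0,2)+(q^3+2q^2+q)T(1,1,2)+(q-1)T(w)T(0,0,1)$, and since all structure constants $m(C_1,C_2;C)$ are non-negative and $T(0,1,2)$ occurs with positive coefficient, the support of $T(0,1,2)T(a,b,c)$ is contained in the support of $T(0,0,1)^2T(a,b,c)$ --- which is computed by applying Theorem \ref{degree1multiplicationthm} twice, yielding a candidate set of at most thirteen cosets $(\delta,(e,f,g))$. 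Seven of these are then killed by direct verification that $m(T(0,1,2)T(a,b,c);Kw^\delta d(e,f,g)K)=0$ (using \eqref{doublemulteq2}, the coset decomposition of $Kd(0,1,2)K$ from Lemma~6.1.2 of \cite{RS}, and the hypothesis $c>a+b$), leaving exactly the six summands of \eqref{degree2multiplicationthmeq1}. This positivity trick buys a large saving: your route requires a new elementary-divisor lemma and a new decomposition of $K$ adapted to $d(0,1,2)$ --- note that $d(0,1,2)=\diag(1,\varpi,\varpi^2,\varpi)$ has equal entries in positions $2$ and $4$, so the Klingen--Levi block where Lemma \ref{divlemma} does its work degenerates, and you would likely need to work with the Siegel parabolic instead, a complication your sketch does not address --- whereas the paper reuses the degree-one machinery wholesale. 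Your second stage (uniqueness from Lemma \ref{BTdecomplemma}; coefficients from \eqref{doublemulteq2} with the left-coset decomposition of $Kd(0,1,2)K$ from Lemma~6.1.2 of \cite{RS}; consistency checks via $\alpha$) coincides with the paper's. Neither stage of your plan contains an error, but be aware that the direct analog of Lemma \ref{gg1g2lemma} you propose is substantially harder than the degree-one case, which is presumably why the authors chose the indirect argument.
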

\begin{proof}
Assume first that $c=a+b$. Then since $0 \leq a \leq c-a$ and $0 \leq b \leq c-b$
we have $a=b=c-a=c-b$ so that $a=b$ and $c=2a=2b$. Evidently,
$d(a,b,c) = \mathrm{diag}(\varpi^a,\varpi^a,\varpi^a,\varpi^a)$, and
hence $d(a,b,c) \in Z$. By Lemma \ref{normalizerlemma} we now have
$T(0,1,2)T(a,b,c) = T(a,a+1,2a+2) = T(a,b+1,c+2)$. This proves the theorem
in this case.

For the remainder of the proof we assume that $c>a+b$.
If $Q \in \mathcal{H}$, then we will write the expression of $Q$
in the $\Z$-basis $\{C\}_{C \in K \backslash \Delta / K}$ for $\mathcal{H}$
as
\begin{equation}
Q = \sum_{C \in K \backslash \Delta /K} j(Q,C) C.
\end{equation}
By Theorem \ref{degree1multiplicationthm}
\begin{align}
T(0,0,1)^2 & =
T(0,0,2) +(q+1)T(0,1,2) +(q+1)T(1,0,2)\nonumber \\
&\quad +(q^3+2q^2 +q) T(1,1,2) +(q-1)T(w) T(0,0,1).\label{degree2multiplicationthmeq3}
\end{align}
Multiplying \eqref{degree2multiplicationthmeq3} on the right by $T(a,b,c)$
we obtain
\begin{equation}
\label{degree2multiplicationthmeq4}
R = R_1 +(q+1) R_2 + (q+1) R_3 + (q^3+2q^2+q) R_4 + (q-1) R_5
\end{equation}
with
$R = T(0,0,1)^2 T(a,b,c)$,
$R_1 = T(0,0,2)T(a,b,c)$,
$R_2 = T(0,1,2)T(a,b,c)$,
$R_3 = T(1,0,2)T(a,b,c)$,
$R_4 = T(1,1,2) T(a,b,c)$,
and
$R_5 = T(w) T(0,0,1) T(a,b,c)$.
It follows that
\begin{align}
j(R,C) &= j(R_1,C) +(q+1) j(R_2,C) + (q+1) j(R_3,C)\nonumber \\
&\quad + (q^3+2q^2+q) j(R_4,C) + (q-1) j(R_5,C) \label{degree2multiplicationthmeq5}
\end{align}
for $C \in K \backslash \Delta / K$.
By \eqref{doublemulteq} the integers $j(R_i,C)$ are non-negative for $C \in
K \backslash \Delta / K$ and $1 \leq i \leq 5$.
This implies that for $C \in K \backslash \Delta /K$,
\begin{equation}
\label{degree2multiplicationthmeq6}
j(R_2, C) \neq 0 \implies
j(R,C) \neq 0.
\end{equation}
Now consider $R$. Applying  Theorem \ref{degree1multiplicationthm} twice
to calculate $R$, we find that $C \in K \backslash \Delta /K$,
\begin{equation}
\label{degree2multiplicationthmeq7}
j(R, C) \neq 0
\implies
\text{$C = Kw^\delta d(e,f,g)K$ for some $(\delta,(e,f,g)) \in A$}
\end{equation}
where
\begin{align}
A &=\left(\{0,1\} \times S_\Delta\right) \cap \left(\{(0,(a+i,b+j,c+ 2)): 0 \leq i,j \leq 2\} \right.\nonumber \\
&\quad \cup \left.\{(1,(a+i,b+j,c+1)): 0 \leq i,j \leq 1 \} \right).
\end{align}
By \eqref{degree2multiplicationthmeq6} and \eqref{degree2multiplicationthmeq7} we now have
\begin{equation}
\label{degree2multiplicationthmeq8}
R_2 = \sum_{(\delta,(e,f,g)) \in A}
j(R_2, Kw^\delta d(e,f,g) K)\cdot Kw^\delta d(e,f,g) K.
\end{equation}
Using \eqref{doublemulteq2}, the coset decomposition from Lemma~6.1.2 of \cite{RS},
and arguments by contradiction, one can verify that
\begin{equation}
\label{degree2multiplicationthmeq9}
m(T(0,1,2)T(a,b,c); Kw^\delta d(e,f,g) K)=0
\end{equation}
for
\begin{align*}
&(\delta,(e,f,g))  \in
\left(\{0,1\} \times S_\Delta\right) \cap \{ (0,(a,b,c+2)),\ (0,(a,b+2,c+2)), \\
&\qquad (0,(a+1,b,c+2)),\  (0,(a+1,b+2,c+2)),\ (0,(a+2,b,c+2)),\\
&\qquad (0,(a+2,b+2,c+2)),\ (1,(a,b+1,c+1)) \}.
\end{align*}
For this, it is useful to recall that by assumption $c>a+b$.
Since $j(R_2,C) = m(T(0,1,2)T(a,b,c);C)$ for $C \in K \backslash \Delta / K$, from
\eqref{degree2multiplicationthmeq8} and \eqref{degree2multiplicationthmeq9}
we obtain
$$
T(0,1,2)T(a,b,c) = R_2
=
\sum_{(\delta,(e,f,g)) \in A'}
j(R_2, Kw^\delta d(e,f,g) K)\cdot Kw^\delta d(e,f,g) K
$$
where
\begin{align*}
A' &=  \left(\{0,1\} \times S_\Delta \right) \cap
\{(0,(a,b+1,c+2)),\  (0,(a+1,b+1,c+2)), \\
&\quad (0,(a+2,b+1,c+2),\  (1,(a,b,c+1)),\ (1,(a+1,b,c+1)),\\
&\quad (1,(a+1,b+1,c+1)) \}.
\end{align*}
This proves the existence
of integers $m_i$ as in \eqref{degree2multiplicationthmeq1}
and \eqref{degree2multiplicationthmeq2}.
The uniqueness of the $m_i$ follows the fact that the
$Kw^\delta d(e,f,g) K$ for $(\delta,(a,b,c)) \in \{0,1\} \times S_\Delta$
form a  $\Z$-basis for $\mathcal{H}$ (recall the definition
of $\mathcal{H}$ and see Lemma \ref{BTdecomplemma}).
Finally, the $m_i$ are explicitly calculated using \eqref{doublemulteq2}
and the coset decomposition from Lemma~6.1.2 of \cite{RS}.
\end{proof}

\begin{theorem}
\label{gentheorem}
The paramodular Hecke algebra $\mathcal{H}$
is generated by the elements $X, Y_1, Y_2$, and $V$.
\end{theorem}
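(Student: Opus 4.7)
My plan is to apply Lemma \ref{BTdecomplemma}: the set $\{T(a,b,c),\, V T(a,b,c) : (a,b,c) \in S_\Delta\}$ is a $\Z$-basis of $\mathcal{H}$, so since $V$ is itself one of the claimed generators, it is enough to prove that $T(a,b,c) \in \mathcal{A} := \Z\langle X, Y_1, Y_2, V\rangle$ for every $(a,b,c) \in S_\Delta$. I would induct on $c$. The cases $c = 0$ and $c = 1$ are immediate since the only relevant triples are $(0,0,0)$ and $(0,0,1)$, giving $1$ and $X$.

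For the inductive step with $c \geq 2$, I would split into three sub-cases according to which of $a, b$ vanish. If $a \geq 1$ and $b \geq 1$, then $(a-1, b-1, c-2) \in S_\Delta$ and \eqref{inductionfacteq} yields $T(a,b,c) = V^2 T(a-1, b-1, c-2) \in \mathcal{A}$ by the inductive hypothesis. Next, if $a = 0$ and $b \geq 1$ (the case $b = 0$, $a \geq 1$ follows by applying the automorphism $\alpha$, which preserves $\mathcal{A}$ in view of \eqref{alphageneq}), I would apply Theorem \ref{degree2multiplicationthm} to the product $Y_1 \cdot T(0, b-1, c-2)$: since the source triple $(0, b-1, c-2)$ satisfies $a' \leq b'$, the coefficient $m_1$ of $T(0, b, c)$ on the right-hand side is $1$. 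Every other term is either of the form $V \cdot T(e,f,g)$ with $g \leq c-1$, or of the form $T(e,f,c)$ with $e, f \geq 1$, or has $(e,f,g) \notin S_\Delta$ (so its coefficient is zero by \eqref{degree2multiplicationthmeq2}); all such terms are already in $\mathcal{A}$ by either the inductive hypothesis or the first sub-case, and solving for $T(0,b,c)$ concludes this sub-case.

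Finally, for $a = b = 0$ I would apply Theorem \ref{degree1multiplicationthm} to the product $X \cdot T(0,0,c-1)$. Since $n_1 = 1$ in every row of the table, the coefficient of $T(0,0,c)$ on the right is $1$. The remaining terms $T(0,1,c)$, $T(1,0,c)$, $T(1,1,c)$, and $V T(0,0,c-1)$ all lie in $\mathcal{A}$ by the first two sub-cases together with the inductive hypothesis, so solving for $T(0,0,c)$ finishes the induction.

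The main obstacle I foresee is the middle sub-case. Multiplying by $X$ alone cannot separate $T(0,1,c)$ from $T(1,0,c)$, because the $\alpha$-invariance of $X$ forces their coefficients to agree in any expansion of $X \cdot T(0,0,c-1)$; thus neither can be isolated from $T(0,0,c)$ using only $X$ and $V$. It is precisely the availability of $Y_1$ (and, via $\alpha$, $Y_2$) in degree $2$ that breaks this symmetry. Verifying that the $m_1$ entries in Theorem \ref{degree2multiplicationthm} equal $1$ on exactly the triples we need, and that every other term on the right-hand side of the $Y_1$-expansion really does fall to an earlier stage of the induction, is where the genuine content of the argument lies; once this is in place, the remainder is straightforward bookkeeping.
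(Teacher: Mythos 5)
Your proposal is correct and follows essentially the same route as the paper: both reduce via Lemma \ref{BTdecomplemma} (together with \eqref{Vouteq}, \eqref{inductionfacteq}, and $\alpha$) to the cosets $T(0,b,c)$, and then run an induction on $c$ using the identities $Y_1\cdot T(0,b-1,c-2)$ and $X\cdot T(0,0,c-1)$ from Theorems \ref{degree2multiplicationthm} and \ref{degree1multiplicationthm}, in which the leading coefficients $m_1$ and $n_1$ equal $1$. Your closing observation --- that $X$ alone cannot separate $T(0,1,c)$ from $T(1,0,c)$ because of $\alpha$-symmetry, so that the degree-two generators are genuinely needed --- correctly identifies the crux of the argument.
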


\begin{proof}
In this proof, if $L$ is a sequence of elements of $\mathcal{H}$,
then $\langle L \rangle \subset \mathcal{H}$ will denote all finite
$\Z$-linear combinations of the elements from $L$.
Let $\mathcal{H}'$ be the subring of $\mathcal{H}$ generated by
$X, Y_1, Y_2$, and $V$. To prove the theorem it will suffice
to prove that $\mathcal{H}_c \subset \mathcal{H}'$
for $c \in \Z_{\geq 0}$. In this proof we will use that
$\alpha \left( \mathcal{H}'\right) = \mathcal{H}'$; this follows from
\eqref{alphageneq}.

We first show that $\mathcal{H}_c\subset \mathcal{H}'$ for $0 \leq c\leq3$.
By Lemma \ref{BTdecomplemma} we have $\mathcal{H}_0 = \langle 1 \rangle$ and
$\mathcal{H}_1 = \langle V, X \rangle$; hence, $\mathcal{H}_0,
\mathcal{H}_1 \subset \mathcal{H}'$. By Lemma \ref{BTdecomplemma}, \eqref{Vouteq},
and \eqref{V2eq}
we have
$$
\mathcal{H}_2 =
\langle
T(0,0,2),\,Y_1,\,Y_2,\,V^2,\, VX
\rangle.
$$
By Theorem~\ref{degree1multiplicationthm} we have
$$
X^2 = T(0,0,2)+(q+1)(Y_1+Y_2) +(q^3+2q^2+q)V^2 + (q-1)VX.
$$
This equation implies that $T(0,0,2) \in \mathcal{H}'$,
so that $\mathcal{H}_2 \subset \mathcal{H}'$.
By Lemma \ref{BTdecomplemma}, \eqref{Vouteq}, \eqref{V2eq},
and \eqref{inductionfacteq} we have
$$
\mathcal{H}_3
=
\langle
T(0,0,3),\, T(0,1,3),\, T(1,0,3),\, V^2X,\, VT(0,0,2),\, VY_1,\, VY_2,\, V^3
\rangle.
$$
By Theorem~\ref{degree1multiplicationthm},
$$
XY_1=T(0,1,3)+q^2V^2X+(q^2-1)VY_1.
$$
It follows that $T(0,1,3) \in \mathcal{H}'$, and hence also $T(1,0,3)
= \alpha \left( T(0,1,3) \right)
\in \mathcal{H}'$. By Theorem~\ref{degree1multiplicationthm}
$$
XT(0,0,2)=T(0,0,3)+qT(0,1,3)+q T(1,0,3)+q^3 V^2 X+(q^2-1)VT(0,0,2).
$$
Since $T(0,0,2), T(0,1,3),T(1,0,3) \in \mathcal{H}'$, this implies that
$T(0,0,3) \in \mathcal{H}'$, completing the argument that $\mathcal{H}_3
\subset \mathcal{H}'$.

Now assume that $c \in \Z$ is such that $c \geq 4$ and $\mathcal{H}_k \subset
\mathcal{H}'$ for $0 \leq k <c$; we need
to prove that $\mathcal{H}_c \subset \mathcal{H'}$.
By Lemma~\ref{BTdecomplemma}, \eqref{Vouteq}, \eqref{inductionfacteq},
 \eqref{alphabetapropeq}, and $\alpha\left(\mathcal{H}'\right) =\mathcal{H}'$,
to prove $\mathcal{H}_c \subset \mathcal{H'}$
it suffices to show that $T(0,b,c)\in\mathcal{H}'$ for $b \in \Z$ with
$0\leq b\leq c-b$.
By Theorem~\ref{degree2multiplicationthm} and \eqref{inductionfacteq}, we have
\begin{align*}
Y_1T(0,0,c-2)&=T(0,1,c)+(q^2-q)V^2 T(0,0,c-2)+m_3 V^2 T(1,0,c-2)\\
&\quad +(q^2-q)VT(1,0,c-1),
\end{align*}
with $m_3=q^3+q^2$ if $c=4$ and $m_3=q^3$ if $c>5$.
Since $\mathcal{H}_k \subset \mathcal{H}'$ for $0 \leq k < c$ this equation
implies that
$T(0,1,c) \in \mathcal{H}'$. Next, by Theorem~\ref{degree1multiplicationthm} and
\eqref{inductionfacteq},
\begin{align*}
XT(0,0,c-1)&=T(0,0,c)+q T(0,1,c)+q \alpha( T(0,1,c)) \\
&\quad +q^3 V^2 T(0,0,c-2)+(q-1) V T(0,0,c-1).
\end{align*}
This implies that $T(0,0,c)\in \mathcal{H}'$. Finally, assume that
$2 \leq b \leq c-b$.  By Theorem~\ref{degree2multiplicationthm}
and \eqref{inductionfacteq}
we have
\begin{align*}
&Y_1T(0,b-1,c-2)=T(0,b,c)+m_2V^2 T(0,b-1,c-2)\\
&\qquad+m_3 V^2 T(1,b-1,c-2) +m_5VT(1,b-1,c-1)+m_6VT(1,b,c-1)
\end{align*}
where the integers $m_i$ are as in Theorem~\ref{degree2multiplicationthm}.
All the terms of this equation other than  $T(0,b,c)$ are in $\mathcal{H}'$
because $\mathcal{H}_k \subset \mathcal{H}'$ for $0 \leq k < c$; it follows
that $T(0,b,c) \in \mathcal{H}'$, completing the proof.
\end{proof}

\begin{lemma}
\label{relations}
We have 
\begin{align}
0&=V X - XV,\label{r1eq}\\
0&=VY_1-Y_2 V,\label{r2eq}\\
0&=V Y_2 - Y_1 V,\label{r3eq}\\
0&=XY_1-Y_1X +(1-q^2) VY_1-(1-q^2) VY_2,\label{r4eq}\\
0&=XY_2 -Y_2 X -(1-q^2) VY_1+(1-q^2) V Y_2,\label{r5eq}\\
0&=Y_1Y_2-Y_2Y_1 -(q-1)^2 (q+1) V^2Y_1 +(q-1)^2(q+1)V^2Y_2\nonumber \\
&\quad+(q-1)VXY_1-(q-1)VXY_2,\label{r6eq}\\
0&=Y_1Y_2+q^2(q+1)^2 V^4 + q(q^2-1) V^3 X - q V^2 X^2 -(q-1) VXY_2 \nonumber 
\\
&\quad+q(q+1)V^2 Y_1+(1+q^3) V^2 Y_2. \label{r7eq}
\end{align}
\end{lemma}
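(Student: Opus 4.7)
The plan is to leverage three tools established earlier: the explicit multiplication formulas in Theorems \ref{degree1multiplicationthm} and \ref{degree2multiplicationthm}, Lemma \ref{normalizerlemma}, and the algebra (anti-)automorphisms $\alpha$ and $\beta$ together with their action on generators in \eqref{alphageneq}--\eqref{betageneq}.

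First I would establish the identity $VT(g) = \alpha(T(g))\,V$ for every $g \in \Delta$. Since $w$ normalizes $K$, two applications of Lemma \ref{normalizerlemma} give $VT(g) = T(w)T(g) = T(wg)$ and $\alpha(T(g))\,V = T(wgw^{-1})T(w) = T(wg)$, proving the identity. Specializing to $g = d(0,0,1),\,d(0,1,2),\,d(1,0,2)$ and reading off \eqref{alphageneq} yields relations \eqref{r1eq}, \eqref{r2eq}, and \eqref{r3eq} at once. Applied to $g = d(b,a,c)$, the same identity gives $T(a,b,c)\,V = V\,T(b,a,c)$, which I will use below.

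Next, I would prove \eqref{r4eq} directly from Theorem \ref{degree1multiplicationthm} applied to $(a,b,c)=(0,1,2)$ (the row ``$a<b,\ b=c-b$''), which gives
\[
XY_1 = T(0,1,3) + q^2 T(1,1,3) + (q^2-1)\,VY_1.
\]
Applying the anti-automorphism $\beta$, which fixes every $T(a,b,c)$ by \eqref{alphabetapropeq} and sends $VY_1$ to $Y_1V = VY_2$ (via the identity from the first paragraph), produces $Y_1X = T(0,1,3) + q^2 T(1,1,3) + (q^2-1)\,VY_2$. Subtracting yields \eqref{r4eq}, and applying $\alpha$ to \eqref{r4eq} yields \eqref{r5eq}. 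For \eqref{r7eq}, I would apply Theorem \ref{degree2multiplicationthm} with $(a,b,c)=(1,0,2)$ (the row ``$b<a,\ a=c-a$'') to get $Y_1Y_2 = q\,T(1,1,4) + (q-1)\,V\,T(1,0,3)$, rewriting $T(1,1,4) = V^2 T(0,0,2)$ via Lemma \ref{normalizerlemma} applied to the central element $\varpi\cdot\mathrm{Id}$. I then eliminate the non-generator terms $T(0,0,2)$ (using Theorem \ref{degree1multiplicationthm} applied to $X^2$, i.e.\ the case $(a,b,c)=(0,0,1)$) and $T(1,0,3) = \alpha(T(0,1,3))$ (with $T(0,1,3)$ obtained as a byproduct of the derivation of \eqref{r4eq}). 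Substituting and collecting coefficients, where the key algebraic simplification is $q(q+1) + (q-1)(q^2-1) = (q+1)(q^2 - q + 1) = q^3 + 1$, produces \eqref{r7eq}.

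Finally, relation \eqref{r6eq} is obtained by subtracting $\alpha$ of \eqref{r7eq} from \eqref{r7eq}: the terms $V^4,\ V^3X,\ V^2X^2$ are fixed by $\alpha$ and cancel, and the coefficient change on $V^2Y_1, V^2Y_2$ reduces to $q(q+1) - (q^3+1) = -(q-1)^2(q+1)$, giving exactly the form of \eqref{r6eq}. The main obstacle is the coefficient bookkeeping in \eqref{r7eq}: once that identity is in hand, \eqref{r6eq} is nearly free, and all other relations reduce to reading off the tables of Theorems \ref{degree1multiplicationthm} and \ref{degree2multiplicationthm} combined with the symmetries $\alpha$ and $\beta$.
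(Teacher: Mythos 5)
Your proposal is correct, and it is essentially the paper's approach: the paper's proof is the one-line assertion that the relations follow from Theorems \ref{degree1multiplicationthm} and \ref{degree2multiplicationthm}, and your argument fills in exactly those computations (the rows $(a,b,c)=(0,1,2)$, $(0,0,1)$, and $(1,0,2)$ of the tables), supplemented by Lemma \ref{normalizerlemma} and the symmetries $\alpha$, $\beta$ to cut the bookkeeping down. All the coefficient checks you flag (e.g.\ $q(q+1)+(q-1)(q^2-1)=q^3+1$ and $q(q+1)-(q^3+1)=-(q-1)^2(q+1)$) are correct.
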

\begin{proof}
These equations follow from Theorem \ref{degree1multiplicationthm} and Theorem \ref{degree2multiplicationthm}.
\end{proof}

Let $R$ be the free $\Z$-algebra on the non-commuting variables $v,x,y_1,$ and 
$y_2$ (for this concept, see for example (1.2), p.~6 of \cite{Lam}).
We define a (weighted) degree function $d:R-0\to \Z_{\geq 0}$ in the
usual way by declaring $d(v)=d(x)=1$ and $d(y_1)=d(y_2)=2$. 
We let $I$ be the
two-sided ideal of $R$ generated by 
\begin{align}
r_1&=v x - xv,\\
r_2&=vy_1-y_2 v,\\
r_3&=v y_2 - y_1 v,\\
r_4&=xy_1-y_1x +(1-q^2) vy_1-(1-q^2) vy_2,\\
r_5&=xy_2 -y_2 x -(1-q^2) vy_1+(1-q^2) v y_2,\\
r_6&=y_1y_2-y_2y_1 -(q-1)^2 (q+1) v^2y_1 +(q-1)^2(q+1)v^2y_2\nonumber \\
&\quad+(q-1)vxy_1-(q-1)vxy_2,\\
r_7&=y_1y_2+q^2(q+1)^2 v^4 + q(q^2-1) v^3 x - q v^2 x^2 -(q-1) vxy_2 \nonumber 
\\
&\quad+q(q+1)v^2 y_1+(1+q^3) v^2 y_2.
\end{align}
For $m \in R$ we write $\bar m =m+I \in R/I$.
\begin{lemma}
\label{RIspanlemma}
The $\Z$-algebra $R/I$ is spanned over $\Z$ by the elements $\bar v^a \bar x^b 
\bar y_1^{c_1} \bar y_2^{c_2}$ such that $a,b,c_1,c_2 \in \Z_{\geq 0}$ and 
$c_1=0$ or $c_2=0$. 
\end{lemma}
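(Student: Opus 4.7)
The plan is to exhibit a rewriting procedure that brings any monomial in $R$ to a $\Z$-linear combination of the proposed normal forms modulo $I$. Since each relation $r_1,\ldots,r_7$ is homogeneous with respect to the weighted degree $d(v)=d(x)=1$, $d(y_1)=d(y_2)=2$, the ideal $I$ is graded and the rewriting preserves degree; it therefore suffices to prove, for each fixed degree, that every monomial of that degree lies in the $\Z$-span of normal-form monomials of the same degree. I would carry out this argument by strong induction on the number $c$ of $y$-letters appearing in the monomial.

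The base case $c=0$ is immediate, since $r_1$ gives the exact rewriting $xv=vx$, so any word in $v$ and $x$ equals $\bar{v}^a\bar{x}^b$ in $R/I$. For the inductive step, fix $c\geq 1$ and assume the claim for all monomials containing strictly fewer than $c$ $y$-letters. The rewriting proceeds in three phases. In Phase~1, I use the exact rewritings $xv=vx$, $y_1 v=vy_2$, and $y_2 v=vy_1$ coming from $r_1$, $r_2$, $r_3$ to move every $\bar{v}$ to the left end; this step introduces no new monomials and preserves $c$. In Phase~2, I use $r_4$ and $r_5$ to push every $\bar{x}$ past every $\bar{y}$: each adjacent rewriting $y_i x\mapsto xy_i$ produces error terms proportional to $vy_j$, which have one fewer $x$ and one more $v$ than the original; re-applying Phase~1 returns those new $v$'s to the left. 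A terminating sub-induction on the number of $x$-letters still positioned to the right of some $y$-letter reduces $\bar{m}$ to a $\Z$-linear combination of words $\bar{v}^{a'}\bar{x}^{b'}\bar{w}$ with $w\in\{y_1,y_2\}^{c}$.

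In Phase~3, I use $r_6$ to swap adjacent $y_2 y_1$ to $y_1 y_2$ inside $w$. The leading term preserves $c$, while every error term contains only $v^2 y_j$ or $vx y_j$ at the swap position, and hence has $c-1$ $y$-letters overall, so it falls under the induction hypothesis. After Phase~3, the leading contribution is a $\Z$-linear combination of terms of the form $\bar{v}^{a'}\bar{x}^{b'}\bar{y}_1^{c_1}\bar{y}_2^{c_2}$ with $c_1+c_2=c$. If $c_1 c_2=0$, these are already in normal form. Otherwise, I factor such a term as $\bar{v}^{a'}\bar{x}^{b'}\bar{y}_1^{c_1-1}(\bar{y}_1\bar{y}_2)\bar{y}_2^{c_2-1}$ and apply $r_7$ to rewrite the middle $\bar{y}_1\bar{y}_2$ as a $\Z$-linear combination of degree-$4$ monomials each containing at most one $y$-letter; the resulting expansion is a $\Z$-linear combination of monomials with at most $c-1$ $y$-letters, handled by the induction hypothesis.

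The main obstacle is the bookkeeping for the interleaved sorting. One has to verify that the sub-induction on the $x$-count in Phase~2 terminates (which holds because each Phase~2 swap strictly decreases that quantity in the error terms, and the newly produced $v$'s are absorbed by Phase~1 without disturbing this count) and that each error term coming from Phase~3 or from the use of $r_7$ genuinely has strictly fewer $y$-letters than the current monomial, so that the outer induction on $c$ closes. The seven relations are tailored precisely so that these three phases can be executed in this order without escaping the induction frame.
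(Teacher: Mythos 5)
Your argument is correct in substance and uses the relations in exactly the roles the paper does ($r_1,r_2,r_3$ to pull $v$'s left, $r_4,r_5$ to sort $x$ past $y$, $r_6$ to order the $y$'s, $r_7$ to destroy mixed $y_1y_2$), but your induction is organized differently and is more laborious than it needs to be. The paper inducts on the weighted degree $k$ and isolates one observation that collapses all your bookkeeping: since $r_1,r_2,r_3$ let $v$ commute to the far left exactly (up to swapping $y_1\leftrightarrow y_2$), any monomial of degree $k+1$ containing $v$ equals $\bar v\cdot(\text{monomial of degree }k)$ and is therefore already in the span by the induction hypothesis. Because \emph{every} error term of $r_4,\dots,r_7$ contains a $v$, all error terms are disposed of at once by this observation, and no sub-induction is needed; one only has to sort the $v$-free part of the word and apply $r_7$ once. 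Your route instead fixes the degree and inducts on the $y$-count, which forces the Phase~2 sub-induction because the error terms of $r_4,r_5$ preserve the $y$-count. That sub-induction does terminate, but not for the measure you name: under the leading rewrite $y_ix\mapsto xy_i$ the number of $x$-letters lying to the right of \emph{some} $y$-letter need not drop (e.g.\ $y_1y_2x\mapsto y_1xy_2$ leaves it at $1$). You should instead use the inversion count $\#\{(i,j):i<j,\ w_i\in\{y_1,y_2\},\ w_j=x\}$, which strictly decreases both for the leading term and for the error terms (where an $x$ is deleted outright). With that correction your three-phase argument closes; it is simply a longer path to the same destination, and the comparison shows what the hypothesis ``all error terms contain $v$'' is buying in the paper's version.
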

\begin{proof}
Let $M$ be the $\Z$-submodule of $R/I$ spanned by the elements in the 
statement. For $k \in \Z_{\geq 0}$ let $(R/I)(k)$ be the $\Z$ submodule of $R/I$ 
spanned by the elements $\bar r$ where $r$ is a monomial in $v, x, y_1, y_2$ 
and $d(r)=k$. Then $R/I$ is the sum of the $(R/I)(k)$ for $k \in \Z_{\geq 0}$. 
To prove the lemma it will suffice to prove that $(R/I)(k) \subset M$ for all 
$k \in \Z_{\geq 0}$. Clearly $(R/I)(0) \subset M$. Assume that $k \geq 0$ and 
$(R/I)(k) \subset M$; we will prove that $(R/I)(k+1) \subset M$. We first note 
that $(R/I)(k) \subset M$ and $\bar r_1 =\bar r_2 =\bar r_3=0$ imply the 
following: (S) If $m \in R$ is a monomial in $v,x,y_1,y_2$ with 
$d(m)=k+1$ and $v$ occurs in $m$, then $\bar m \in M$. Now let $q \in R$ be a 
monomial in $v,x,y_1,y_2$ with $d(q)=k+1$. Using $\bar r_1=\bar r_2= \bar 
r_3=\bar r_4 = \bar r_5 = \bar r_6 =0$ and (S) we see that $\bar q +M= \bar v^a 
\bar x^b \bar y_1^{c_1} \bar y_2^{c_2}+M$ for some $a,b,c_1,c_2 \in \Z_{\geq 
0}$.  Finally, using $\bar r_7=0$ and (S), we have $\bar v^a 
\bar x^b \bar y_1^{c_1} \bar y_2^{c_2} \in M$ so that $\bar q \in M$. 
\end{proof}

\begin{theorem}
\label{genreltheorem}
If $k \in \Z_{\geq 0}$, then 
$$
\mathcal{B}_k=\left\{ V^{e_1} X^{e_2} Y_1^{e_3} Y_2^{e_4} :  e_1,e_2,e_3,e_4 
\in \Z_{\geq 0},\
e_1+e_2+2(e_3+e_4)=k,\ e_3e_4=0\right\}
$$
is a $\Z$ basis for $\mathcal{H}_k$. 
The homomorphism $i: R \to \mathcal{H}$ determined by $i(v)=V$, $i(x)=X, 
i(y_1)=Y_1$, and $i(y_2)=Y_2$  is surjective and has kernel $I$ so that 
$ R/I \cong \mathcal{H}$.
\end{theorem}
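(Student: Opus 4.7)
The plan is to combine Theorem \ref{gentheorem}, Lemma \ref{relations}, and Lemma \ref{RIspanlemma} through a graded rank count. First, by Lemma \ref{relations} the generators $r_1, \dots, r_7$ of $I$ lie in $\ker i$, so $i$ factors through a $\Z$-algebra homomorphism $\bar i : R/I \to \mathcal{H}$, which is surjective by Theorem \ref{gentheorem}. Each $r_j$ is homogeneous for the weighted degree $d$, so $I$ is a homogeneous ideal, $R/I$ inherits a $\Z_{\geq 0}$-grading, and $\bar i$ is a graded map. By Lemma \ref{RIspanlemma}, the degree-$k$ summand $(R/I)_k$ is $\Z$-spanned by the monomials $\bar v^{e_1} \bar x^{e_2} \bar y_1^{e_3} \bar y_2^{e_4}$ with $(e_1, e_2, e_3, e_4) \in \Z_{\geq 0}^4$, $e_3 e_4 = 0$, and $e_1 + e_2 + 2(e_3 + e_4) = k$; applying $\bar i$, I conclude that $\mathcal{B}_k$ spans $\mathcal{H}_k$ over $\Z$.

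Next, I would count $|\mathcal{B}_k|$ directly. Splitting by $m = e_3 + e_4$ (with $m = 0$ giving only the tuple $(0,0)$ and each $m \geq 1$ giving the two tuples $(m, 0)$ and $(0, m)$), and then noting that $e_1 + e_2 = k - 2m$ has $k - 2m + 1$ solutions, I obtain
\begin{equation*}
|\mathcal{B}_k| \;=\; (k+1) \;+\; 2\sum_{m=1}^{\lfloor k/2 \rfloor}(k - 2m + 1) \;=\; \frac{2k^2 + 4k + 3 + (-1)^k}{4},
\end{equation*}
which by \eqref{rankeq} equals $\mathrm{rank}_\Z(\mathcal{H}_k)$.

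Since $\mathcal{H}_k$ is free of this rank (by Lemma \ref{BTdecomplemma}, the set of double cosets of degree $k$ is a $\Z$-basis), and $\mathcal{B}_k$ is a spanning set of matching cardinality, $\mathcal{B}_k$ must itself be a $\Z$-basis: the natural surjection $\Z^{|\mathcal{B}_k|} \twoheadrightarrow \mathcal{H}_k$ is a surjection between free $\Z$-modules of equal rank, so its kernel is free of rank zero and hence trivial. This gives the first assertion. For the second, consider the composition $\Z^{|\mathcal{B}_k|} \twoheadrightarrow (R/I)_k \twoheadrightarrow \mathcal{H}_k$ in which the first map sends the standard basis to the spanning monomials from Lemma \ref{RIspanlemma} and the second is $\bar i$. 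Since the composition is an isomorphism, so is $\bar i$ in degree $k$. Summing over $k$, $\bar i$ is an isomorphism of graded $\Z$-algebras, so $\ker i = I$ and $R/I \cong \mathcal{H}$.

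The main obstacle is the exact numerical equality $|\mathcal{B}_k| = \mathrm{rank}_\Z(\mathcal{H}_k)$: the upper bound on $(R/I)_k$ coming from Lemma \ref{RIspanlemma} and the known rank of $\mathcal{H}_k$ coming from Lemma \ref{BTdecomplemma} have to meet on the nose for the sandwich argument to collapse to an isomorphism. Once the count is verified, everything else is a graded restatement of the surjectivity of $i$ and the annihilation of $I$ by $i$, and both parts of the theorem fall out simultaneously.
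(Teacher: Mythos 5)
Your proposal is correct and follows essentially the same route as the paper: surjectivity from Theorem \ref{gentheorem}, $I\subset\ker i$ from Lemma \ref{relations}, spanning of $\mathcal{H}_k$ by $\mathcal{B}_k$ via Lemma \ref{RIspanlemma} and the grading, and then the cardinality match with \eqref{rankeq} to upgrade spanning to a basis and surjectivity to an isomorphism. The only cosmetic differences are that you make the count $|\mathcal{B}_k|=(2k^2+4k+3+(-1)^k)/4$ explicit and phrase the injectivity of $\bar i$ as a degreewise rank argument rather than the paper's element-by-element expansion in the bases $\mathcal{C}_k$ and $\mathcal{B}_k$.
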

\begin{proof}
The homomorphism $i$ is surjective by Theorem \ref{gentheorem}, and $I \subset \ker(i)$ by
Lemma \ref{relations}. Let $\bar i: R/I \to \mathcal{H}$ be the homomorphism induced by
$i$; we need to prove that $i$ is injective. Let $k \in \Z_{\geq 0}$. We 
first claim that
$\mathcal{B}_k$
is a $\Z$-basis for $\mathcal{H}_k$. A calculation 
shows that the number of elements in $\mathcal{B}_k$ is 
$\mathrm{rank}_\Z(\mathcal{H}_k)$ (see \eqref{rankeq}). Thus, it suffices to prove that $\mathcal{B}_k$
spans $\mathcal{H}_k$ over $\Z$. Let $\mathcal{H}_k'$ be the $\Z$ submodule of 
$\mathcal{H}_k$ spanned by $\mathcal{B}_k$. Since $\bar i$ is surjective, Lemma 
\ref{RIspanlemma} implies that $\mathcal{H} = \sum_{k=0}^\infty \mathcal{H}_k'$. Also,  since
$\mathcal{H}_k' \subset \mathcal{H}_k$ for $k \in \Z_{\geq 0}$ and $\mathcal{H} 
= \oplus_{k=0}^\infty \mathcal{H}_k$, the sum $\sum_{k=0}^\infty 
\mathcal{H}_k'$ is direct so that $\mathcal{H} = \oplus_{k=0}^\infty 
\mathcal{H}_k'$. We now deduce that $\mathcal{H}_k' = \mathcal{H}_k$ for $k \in 
\Z_{\geq 0}$, proving that $\mathcal{B}_k$ is a $\Z$-basis for $\mathcal{H}_k$ 
for $k \in \Z_{\geq 0}$. Next, let 
$$
\mathcal{C}_k=\left\{ \bar v^a \bar x^b \bar y_1^{c_1} \bar y_2^{c_2} :  a,b,c 
\in \Z_{\geq 0},\
a+b+2c_1+2c_2=k,\  c_1=0\ \text{or}\ c_2=0\right\},
$$
and let $M_k$ be the $\Z$-submodule spanned by $\mathcal{C}_k$ in $R/I$  for 
$k \in \Z_{\geq 0}$.  By Lemma \ref{RIspanlemma} we have $R/I = \sum_{k=0}^\infty M_k$.
Let $m \in \ker(\bar i)$. Write $m = 
\sum_{k=0}^\infty m_k$ where $m_k \in M_k$ for $k \in \Z_{\geq 0}$ and $m_k=0$ 
for all but finitely many $k$. Then $0 = \sum_{k=0}^\infty \bar i (m_k)$. As 
$\mathcal{H}$ is the direct sum of the $\mathcal{H}_k$ and $\bar i(M_k) 
=\mathcal{H}_k$ for $k \in \Z_{\geq 0}$, we have $\bar i (m_k) =0$ for $k \in 
\Z_{\geq 0}$. Let $k \in \Z_{\geq 0}$. Let $p_1,\dots,p_N$ be a listing of the 
elements in $\mathcal{C}_k$, and write $m_k = \sum_{\ell=1}^N n_\ell p_\ell$ 
for some $n_1,\dots,n_N \in \Z$. Then $0 = \sum_{\ell=0}^N n_\ell \bar i 
(p_\ell)$. Now  $\bar i(p_1),\dots, \bar i(p_N)$ is a listing 
of the elements of $\mathcal{B}_k$. Since $i(p_1),\dots,i(p_N)$ is a 
$\Z$-basis for $\mathcal{H}_k$ we must have $n_1=\cdots = n_N=0$. Hence, 
$m_k=0$, so that $m=0$. 
\end{proof}

The first few $\mathcal{B}_k$ are
$$
\begin{array}{ccl}
\toprule
k & \mathrm{rank}_\Z(\mathcal{H}_k) & \mathcal{B}_k \\
\midrule
0 & 1 & 1 \\
1 & 2 & V,\: X\\
2 & 5 & Y_1,\: Y_2,\: V^2,\: VX,\: X^2\\
3 & 8 & VY_1\:, XY_1,\: VY_2,\: XY_2,\: V^3,\: V^2X,\: VX^2,\: X^3\\
4 & 13 & Y_1^2,\: Y_2^2,\: V^2 Y_1,\: VXY_1,\: X^2 Y_1,\: V^2 Y_2,\: VX Y_2, \:
X^2 Y_2,\\
&& V^4,\: V^3 X,\: V^2 X^2,\: V X^3,\: X^4\\
\bottomrule
\end{array}
$$

Since $\mathcal{B}_k$ is a basis for $\mathcal{H}_k$
for $k \in \Z_{\geq 0}$ by Theorem \ref{genreltheorem},
Lemma \ref{relations} implies that $\mathcal{H}$ is not commutative.
In particular, the commutators $[V,Y_1]=-[V,Y_2]$,
$[X,Y_1]=-[X,Y_2]$, and $[Y_1,Y_2]$ are all non-zero.

It is interesting to observe that  the Hilbert-Poincar\'e series of $\mathcal{H}$ satisfies the statement of the Hilbert-Serre theorem:
if $t$ is an indeterminate, then
\begin{equation}
\label{hilbertserieseq}
\sum_{k=0}^\infty \mathrm{rank}_\Z (\mathcal{H}_k ) t^k =
\dfrac{1-t^4}{(1-t)(1-t)(1-t^2)(1-t^2)}.
\end{equation}
For this use \eqref{rankeq}.
For a statement of the usual Hilbert-Serre theorem see Theorem~11.1 of \cite{AM}
or \cite{Stan}.
The Hilbert-Serre theorem does not generally hold for graded non-commutative algebras. See for example \cite{She} and \cite{A}.
Also, we note that $\mathcal{H}$ is not a domain, i.e., there exist $P, Q \in \mathcal{H}$ with $P \neq 0$, $Q \neq 0$, and
$PQ=0$. For example,
\begin{equation}
\label{zerodiveq}
\left( Y_1  + (1+q) V^2 +VX\right) \left( Y_2  +(q^2+q^3) V^2-q VX \right) =0.
\end{equation}
In the next section we will
use \eqref{zerodiveq} to classify homomorphisms from $\mathcal{H}$ to $\C$.

To conclude this section we relate $\mathcal{H}$ to the Hecke algebra studied in \cite{GK}.
For the remainder of this section we assume that $p$ is a prime of $\Z$ and that $F=\Q_p$.
Let
$$
\mathrm{K}(p) =
\SSp(4,\Q) \cap
\begin{bsmallmatrix}
\Z & \Z & p^{-1} \Z & \Z \\
p \Z & \Z & \Z & \Z \\
p \Z & p \Z & \Z & p \Z \\
p \Z & \Z & \Z & \Z
\end{bsmallmatrix}.
$$
We refer to $\mathrm{K}(p)$ as the \emph{paramodular congruence group} of level $p$.
In \cite{GK} the group $\mathrm{K}(p)$ is denoted by $\Sigma_p$ (\cite{GK} actually
considers a different realization of $\mathrm{K}(p)$, but this difference is
insignificant). We also define
\begin{align*}
\Delta(p) & = \{ g \in \GSp(4,\Q) \cap \begin{bsmallmatrix}
\Z & \Z & p^{-1} \Z & \Z \\
p \Z & \Z & \Z & \Z \\
p \Z & p \Z & \Z & p \Z \\
p \Z & \Z & \Z & \Z
\end{bsmallmatrix} : \text{$\lambda(g) = p^k$ for some $k \in \Z_{\geq 0}$} \},\\
\mathbb{G}_p & = \{ \lambda(h)^{-\frac{1}{2}} h: \text{$h \in \GSp(4,\R) \cap \Mat(4,\Z)$
and $\lambda(g) = p^k$ for some $k \in \Z_{\geq 0}$} \}.
\end{align*}
Then $\Delta(p)$ and $\mathbb{G}_p$ contain $1$ and are semi-groups.
One may verify that the commensurators of $\mathrm{K}(p)$ in  $\GSp(4,\Q)^+
=\{g \in \GSp(4,\Q): \lambda(g)>0\}$ and $\R_{>0}\cdot \GSp(4,\Q)^+$ are
$\GSp(4,\Q)^+$ and $\R_{>0}\cdot \GSp(4,\Q)^+$, respectively. Since $\mathrm{K}(p) \subset
\Delta(p) \subset \GSp(4,\Q)^+$ and $\mathrm{K}(p) \subset \mathbb{G}_p \subset
\R_{>0} \cdot \GSp(4,\Q)^+$, we may consider the Hecke algebras
 $\mathcal{R}(\mathrm{K}(p), \Delta(p))$ and $\mathcal{R}(\mathrm{K}(p),\mathbb{G}_p)$
(see p.~54 of \cite{S2}). One may prove that there is an isomorphism
of rings
$$
\mathcal{R}(\mathrm{K}(p), \Delta(p)) \stackrel{\sim}{\longrightarrow}
\mathcal{R}(K,\Delta) = \mathcal{H}
$$
that sends $\mathrm{K}(p)g\mathrm{K}(p)$ to $KgK$ for $g \in \Delta(p)$.
Thus $\mathcal{R}(\mathrm{K}(p), \Delta(p))$
is essentially the Hecke algebra $\mathcal{H}$ studied in this paper when $F=\Q_p$.
The Hecke algebra $\mathcal{R}(\mathrm{K}(p),\mathbb{G}_p)$ is considered in
\cite{GK}, and is denoted there by $\mathcal{H}(\Sigma_p,\mathbb{G}_p)$. The
following proposition relates $\mathcal{R}(\mathrm{K}(p),\mathbb{G}_p)$ to
$\mathcal{R}(\mathrm{K}(p), \Delta(p))$.
\begin{proposition}
\label{GKprop}
Define $f: \mathcal{R}(\mathrm{K}(p), \Delta(p)) \to \mathcal{R}(\mathrm{K}(p),\mathbb{G}_p)$
by setting
$$
f(\mathrm{K}(p)g\mathrm{K}(p)) = \mathrm{K}(p)\lambda(g)^{-\frac{1}{2}}g\mathrm{K}(p) \quad
\text{for $g \in \Delta(p)$}
$$
and linearly extending to $\mathcal{R}(\mathrm{K}(p), \Delta(p))$.
Then $f$ is a well-defined surjective ring homomorphism with kernel the two-sided ideal
generated by $V^2-1$.
\end{proposition}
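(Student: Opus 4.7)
The plan is to verify in turn that $f$ is well-defined, a ring homomorphism, surjective, and has kernel exactly $(V^2-1)\mathcal{H}$. For well-definedness, given $g\in \Delta(p)$ with $\lambda(g)=p^k$, the element $\lambda(g)^{-1/2}g$ lies in $\mathbb{G}_p$ because $p^m g \in \GSp(4,\R)\cap\Mat(4,\Z)$ for $m$ large enough to clear the $p^{-1}\Z$ entry, and then $\lambda(p^m g)^{-1/2}(p^m g) = \lambda(g)^{-1/2}g$. If $g' = k_1 g k_2$ with $k_i\in \mathrm{K}(p)$, then $\lambda(g') = \lambda(g)$ (as $\lambda(k_i)=1$) and the central scalar $\lambda(g)^{-1/2}$ commutes with $k_1, k_2$, so the image double coset is unchanged. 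For the homomorphism property I invoke Shimura's coset-counting description of Hecke multiplication: a left coset decomposition $\mathrm{K}(p) g_1 \mathrm{K}(p) = \bigsqcup h_i \mathrm{K}(p)$ yields $\mathrm{K}(p)\lambda(g_1)^{-1/2} g_1 \mathrm{K}(p) = \bigsqcup \lambda(g_1)^{-1/2} h_i \mathrm{K}(p)$ because the scalar is central, and multiplicativity $\lambda(g_1 g_2)^{-1/2} = \lambda(g_1)^{-1/2}\lambda(g_2)^{-1/2}$ ensures the same structure constants appear in both products.

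For surjectivity, given $h = \lambda(h')^{-1/2} h' \in \mathbb{G}_p$ with $h' \in \GSp(4,\R)\cap\Mat(4,\Z)$ and $\lambda(h') = p^k$, view $h'$ locally at $p$ and apply Lemma~\ref{BTdecomplemma} (through the local--global isomorphism $\mathcal{R}(\mathrm{K}(p),\Delta(p))\cong \mathcal{R}(K,\Delta)$ noted just before the proposition) to obtain $(\delta,(a,b,c))\in\{0,1\}\times S_\Delta$ with $\mathrm{K}(p)h'\mathrm{K}(p)=\mathrm{K}(p)w^\delta d(a,b,c)\mathrm{K}(p)$; then $f(T(w^\delta d(a,b,c))) = \mathrm{K}(p)h\mathrm{K}(p)$. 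For the kernel, the identity $V^2 = T(\mathrm{diag}(p,p,p,p))$ from \eqref{V2eq} gives $f(V^2) = \mathrm{K}(p)\,p^{-1}\cdot pI\cdot\mathrm{K}(p) = f(1)$, and since $V^2$ is central in $\mathcal{H}$, the two-sided ideal generated by $V^2-1$ is $(V^2-1)\mathcal{H}\subseteq\ker f$.

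The main task is to prove the reverse inclusion $\ker f\subseteq (V^2-1)\mathcal{H}$, which I do by showing the induced map $\bar f:\mathcal{H}/(V^2-1)\to \mathcal{R}(\mathrm{K}(p),\mathbb{G}_p)$ is an isomorphism. Combining \eqref{V2eq}, \eqref{Vouteq}, \eqref{inductionfacteq}, and centrality of $V^2$, one checks that $V^2 \cdot T(w^\delta d(a,b,c)) = T(w^\delta d(a+1,b+1,c+2))$, so modulo $V^2-1$ the basis $\{T(w^\delta d(a,b,c))\}_{(\delta,(a,b,c))\in\{0,1\}\times S_\Delta}$ from Lemma~\ref{BTdecomplemma} collapses to those representatives with $\min(a,b)=0$. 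On the $\mathbb{G}_p$ side, two images $\mathrm{K}(p)\,p^{-(c+\delta)/2}w^\delta d(a,b,c)\,\mathrm{K}(p)$ and $\mathrm{K}(p)\,p^{-(c'+\delta')/2}w^{\delta'} d(a',b',c')\,\mathrm{K}(p)$ coincide only when the transition scalar $p^{(c+\delta-c'-\delta')/2}$ is rational (hence an integral power of $p$), and applying the uniqueness clause of Lemma~\ref{BTdecomplemma} to the resulting equation forces $\delta=\delta'$ together with $(a,b,c)=(a'+m,b'+m,c'+2m)$ for some $m\in\Z$. Thus $\bar f$ is a bijection on distinguished bases, hence an isomorphism, establishing $\ker f = (V^2-1)\mathcal{H}$. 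The principal obstacle is this final rationality argument, which must match precisely the single parametric freedom supplied by multiplying representatives by $V^2=T(pI)$ with the coincidence relation on double cosets in $\mathbb{G}_p$.
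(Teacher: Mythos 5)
The paper leaves this proof to the reader, so there is nothing to compare against; judged on its own terms, your overall architecture is right: well-definedness via clearing denominators, the inclusion $(V^2-1)\mathcal{H}\subseteq\ker f$ from $f(V^2)=f(1)$ and centrality of $V^2$, and the kernel computation by showing $\bar f$ matches the basis of $\mathcal{H}/(V^2-1)$ (representatives normalized by $\min(a,b)=0$) bijectively onto distinct double cosets of $\mathbb{G}_p$ — with the rationality of the transition scalar and the uniqueness clause of Lemma~\ref{BTdecomplemma} doing the work — is correct and is the essential content of the proposition.

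There is, however, a genuine error in the surjectivity step. You apply Lemma~\ref{BTdecomplemma} to $h'$ to produce $(\delta,(a,b,c))\in\{0,1\}\times S_\Delta$ with $\mathrm{K}(p)h'\mathrm{K}(p)=\mathrm{K}(p)w^\delta d(a,b,c)\mathrm{K}(p)$. That lemma applies only to elements of $\Delta$ (equivalently, of $\Delta(p)$ via the local--global identification), and an element $h'\in\GSp(4,\R)\cap\Mat(4,\Z)$ with $\lambda(h')=p^k$ need not lie in $\Delta(p)$: take $h'=s_1$, which has $\lambda(s_1)=1$ but $(2,1)$-entry equal to $1\notin p\Z$. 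For this $h'$ your intermediate claim is false, not merely unjustified: comparing similitude factors in $\mathrm{K}(p)s_1\mathrm{K}(p)=\mathrm{K}(p)w^\delta d(a,b,c)\mathrm{K}(p)$ forces $c+\delta=0$, hence $(\delta,(a,b,c))=(0,(0,0,0))$ and $s_1\in\mathrm{K}(p)$, which fails. The repair is short and renders the appeal to Lemma~\ref{BTdecomplemma} unnecessary: since $h'\in\Mat(4,\Z)$, every entry of $ph'$ lies in $p\Z$, which is contained in each entry-condition defining $\Delta(p)$, so $ph'\in\Delta(p)$ with $\lambda(ph')=p^{k+2}$, and $\lambda(ph')^{-\frac{1}{2}}(ph')=\lambda(h')^{-\frac{1}{2}}h'=h$; hence $f(\mathrm{K}(p)\,ph'\,\mathrm{K}(p))=\mathrm{K}(p)h\mathrm{K}(p)$. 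A smaller point: in the homomorphism step you should acknowledge that $f$ identifies distinct source double cosets, so the coefficient of $C'$ in $f(C_1)f(C_2)$ must be compared with the \emph{sum} of $m(C_1,C_2;C)$ over all $C$ with $f(C)=C'$; this works because every $C$ in the support of $C_1\cdot C_2$ has the same similitude $\lambda(g_1g_2)$, so at most one such $C$ lies over any given $C'$, and for that one the coset count is preserved exactly as you say.
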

We leave the proof of this proposition to the reader. Theorem 5.5 of \cite{GK}
asserts that the ring $\mathcal{R}(\mathrm{K}(p),\mathbb{G}_p)$ is generated by
$f(V)$, $f(X)$, and $f(Y_1)$. This result can also be deduced from
Theorem \ref{gentheorem}, \eqref{r2eq}, and Proposition \ref{GKprop} (note
that $f(V)^2=1$).

\section{Complex homomorphisms and rationality}

Let $\Hom(\mathcal{H}, \C)$
be the set of all homomorphisms from $\mathcal{H}$ to $\C$, i.e., 
maps $\chi: \mathcal{H} \to \C$  such that $\chi(1)=1$, $\chi(xy) = \chi(x) 
\chi(y)$, and 
$\chi(x+y) = \chi(x)+\chi(y)$ for $x, y \in \mathcal{H}$. 
If $(a,b,c) \in D_\Delta$, then we abbreviate
\begin{equation}
\chi (a,b,c) = \chi(T(a,b,c)).
\end{equation}
Also, for $k \in \Z_{\geq 0}$, let
\begin{equation}
T(q^k) = \sum\limits_{\substack{C \in K \backslash \Delta / K,\\ C \in \mathcal{H}_k}}
C.
\end{equation}
In this section, for a natural set of $\chi \in \Hom(\mathcal{H},\C)$, we
will calculate the formal power series $\sum_{k=0}^\infty \chi(T(q^k)) t^k$
for an indeterminate $t$. We begin by classifying the elements of $\Hom(\mathcal{H},\C)$.

\begin{lemma}
\label{charactertypelemma}
If $\chi \in \Hom(\mathcal{H}, \C)$, then one of the following holds:
\begin{enumerate}
\item[(i)] $\chi(V) \neq 0$ and $\chi(Y_1) = \chi(Y_2) = -(1+q) \chi(V)^2 
-\chi(V) \chi(X)$. 
\item[(ii)] $\chi(V) \neq 0$ and $\chi(Y_1) = \chi(Y_2) = -(q^2+q^3) \chi(V)^2 
+ q\chi(V) \chi(X)$. 
\item[(iii)] $\chi(V) =0$ and $\chi(Y_1) \chi(Y_2)=0$. 
\end{enumerate}
Let $\Hom(\mathcal{H},\C)_1, \Hom(\mathcal{H},\C)_2,$ and 
$\Hom(\mathcal{H},\C)_3$ be the sets of $\chi \in \Hom(\mathcal{H},\C)$ 
satisfying (i), (ii), and (iii) respectively, and define
\begin{align*}
V_1 & = \{(\varepsilon,\lambda,\mu) \in \C^\times \times \C \times \C: \mu =-(1+q)
\varepsilon^2 -\varepsilon \lambda \},\\
V_2 & = \{(\varepsilon,\lambda,\mu) \in \C^\times \times \C \times \C: \mu =
-(q^2+q^3) \varepsilon^2+ q \varepsilon \lambda  \},\\
V_3 & = \{ (\lambda, \mu_1,\mu_2) \in \C^3: \mu_1 
\mu_2 =0\}.
\end{align*}
Then the functions
\begin{align}
&\Hom(\mathcal{H},\C)_1
\stackrel{\sim}{\longrightarrow}
V_1, \qquad 
\chi \mapsto (\chi(V),\chi(X), \chi(Y_1)),\label{cteq1} \\
&\Hom(\mathcal{H},\C)_2
\stackrel{\sim}{\longrightarrow}
V_2, \qquad 
\chi \mapsto (\chi(V),\chi(X), \chi(Y_1)),\label{cteq2} \\
&\Hom(\mathcal{H},\C)_3
\stackrel{\sim}{\longrightarrow}
V_3, \qquad 
\chi \mapsto (\chi(X),\chi(Y_1), \chi(Y_2)) \label{cteq3}
\end{align}
are bijections. 
\end{lemma}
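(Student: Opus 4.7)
The strategy hinges on two key consequences of the defining relations of $\mathcal{H}$: the commutation identity \eqref{r2eq} and the zero-divisor factorization \eqref{zerodiveq}. The plan is to extract the three-case dichotomy by pushing these through $\chi$, and then to verify bijectivity using the presentation of $\mathcal{H}$ from Theorem \ref{genreltheorem}.

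For the classification, applying $\chi$ to \eqref{r2eq} yields $\chi(V)(\chi(Y_1)-\chi(Y_2)) = 0$, so either $\chi(V) = 0$ or $\chi(Y_1) = \chi(Y_2)$. Applying $\chi$ to \eqref{zerodiveq} and using that $\C$ is an integral domain gives
$$\bigl(\chi(Y_1) + (1+q)\chi(V)^2 + \chi(V)\chi(X)\bigr)\bigl(\chi(Y_2) + (q^2+q^3)\chi(V)^2 - q\chi(V)\chi(X)\bigr) = 0.$$
If $\chi(V) = 0$ this collapses to $\chi(Y_1)\chi(Y_2) = 0$, which is case (iii). If $\chi(V) \neq 0$, the first step forces $\chi(Y_1) = \chi(Y_2)$, and the factorization then requires one of the two factors above to vanish, giving case (i) or case (ii).

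Injectivity of the three maps \eqref{cteq1}, \eqref{cteq2}, \eqref{cteq3} is immediate from Theorem \ref{gentheorem}: since $V, X, Y_1, Y_2$ generate $\mathcal{H}$, the homomorphism $\chi$ is determined by its values on them, and in cases (i) and (ii) the value $\chi(Y_2) = \chi(Y_1)$ is prescribed by $(\chi(V), \chi(X))$, while in case (iii) we have $\chi(V) = 0$. For surjectivity, given a triple in $V_1$, $V_2$, or $V_3$, I would define a ring homomorphism from the free $\Z$-algebra $R$ of Theorem \ref{genreltheorem} to $\C$ by prescribing the images of $v, x, y_1, y_2$, and then verify that each of the seven relations $r_1, \dots, r_7$ is sent to $0$; by Theorem \ref{genreltheorem}, this descends to a homomorphism $\mathcal{H} \to \C$ with the prescribed values.

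The verifications for $r_1, r_2, r_3$ are trivial by commutativity of $\C$, and those for $r_4, r_5, r_6$ each reduce to a multiple of $\chi(V)(\chi(Y_1)-\chi(Y_2))$, which vanishes in all three cases. The main obstacle, which I expect to be the only computation that really uses the precise coefficients, is $r_7$: a direct expansion shows that $\chi(r_7)$ differs from the image of the product in \eqref{zerodiveq} by a multiple of $\chi(V)(\chi(Y_1)-\chi(Y_2))$. Hence $\chi(r_7) = 0$ in all three cases, since both the product and the correction term vanish by the case-defining conditions.
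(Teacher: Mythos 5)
Your proposal is correct and follows essentially the same route as the paper: relation \eqref{r2eq} gives the dichotomy, \eqref{zerodiveq} gives the factorization (the paper uses \eqref{r7eq} directly in the $\chi(V)=0$ case, but this is equivalent since the product in \eqref{zerodiveq} collapses to $\chi(Y_1)\chi(Y_2)$ there), injectivity comes from Theorem \ref{gentheorem}, and surjectivity comes from checking that the relations $r_1,\dots,r_7$ vanish under the prescribed evaluation so that Theorem \ref{genreltheorem} applies. Your observation that $\chi(r_7)$ differs from the evaluated product by a multiple of $\chi(V)(\chi(Y_1)-\chi(Y_2))$ is a correct and slightly more explicit account of the verification the paper leaves to "calculations."
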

\begin{proof} 
Let $\chi \in \Hom(\mathcal{H},\C)$. 
Applying $\chi$ to \eqref{r2eq} gives $\chi(V)(\chi(Y_1)-\chi(Y_2))=0$. Hence, 
$\chi(V)=0$ or $\chi(V) \neq 0$ and $\chi(Y_1) =\chi(Y_2)$. Assume $\chi(V)=0$. 
Applying $\chi$ to \eqref{r7eq} implies that $\chi(Y_1)\chi(Y_2)=0$ so that 
(iii) holds. Assume that $\chi(V) \neq 0$ and $\chi(Y_1) = \chi(Y_2)$. Applying 
to $\chi$ to \eqref{zerodiveq}  we obtain
$$
0 =
\left(\chi(Y_1) +(1+q) \chi(V)^2+ \chi(V) \chi(X) \right)
 \left( \chi(Y_1) + (q^2+q^3) \chi (V)^2 - q \chi(V) \chi(X)
\right).
$$
Hence, (i) or (ii) holds. The functions \eqref{cteq1}, 
\eqref{cteq2}, and \eqref{cteq3} are injective by Theorem~\ref{gentheorem}. Let
$i =1$ or $i=2$. To prove that $\Hom(\mathcal{H},\C)_i \to V_i$ is surjective, 
let $(\varepsilon,\lambda, \mu) \in V_i$. Let $\eta: R \to \C$ be the
homomorphism determined by setting
$\eta(v) = \varepsilon$, $\eta(x) = \lambda$, and $\eta(y_1) = \eta(y_2) = \mu$
($R$ is defined after Lemma \ref{relations}).
Calculations show that $\eta(r_j) =0$ for $j=1,\dots,7$, so that $\eta(I)=0$. 
By Theorem~\ref{genreltheorem}, $\eta$ induces a homomorphism $\chi:
\mathcal{H} \to \C$ such that $\chi \in \Hom(\mathcal{H},\C)_1$ and
$\chi(V)=\varepsilon$, $\chi(X) = \lambda$, and $\chi(Y_1)=\mu$. The proof that 
$\Hom(\mathcal{H},\C)_3 \to V_3$ is surjective is similar. 
\end{proof}

By Lemma \ref{charactertypelemma} we see that $\Hom(\mathcal{H},\C)_3$ has
empty intersection with $\Hom(\mathcal{H},\C)_1$ and $\Hom(\mathcal{H},\C)_2$,
and
$$
\Hom(\mathcal{H},\C)_1 \cap \Hom(\mathcal{H},\C)_2
=
\{ \chi \in \Hom(\mathcal{H},\C): \text{$\chi(V) \neq 0$ and
$\chi(X) = \chi(V) (q^2-1)$} \}.
$$

Representation theory provides examples of elements of $\Hom(\mathcal{H},\C)_1$
and $\Hom(\mathcal{H},\C)_2$. Assume that
$(\pi,V)$ is an irreducible, admissible representation of $G$ with trivial central
character. As in \cite{RS}, define
\begin{align}
V(0) & = \{ v \in V: \pi(k) v = v, k \in \GSp(4,\OF)\},\label{level0eq}\\
V(1) & = \{ v \in V: \pi(k) v = v, k \in K\}.\label{level1eq}
\end{align}
The algebra $\mathcal{H}$ acts on $V(1)$, and we also denote this action by $\pi$ (see p.~188 of
\cite{RS} and use \eqref{Hprimeisoeq}). Assume
for the rest of this paragraph that $\dim V(1) =1$, and let $V(1)$ be spanned by $v$.
Then there exists $\chi=\chi_\pi \in \Hom(\mathcal{H},\C)$ such that $\pi(x)v = \chi(x)v$ for
$x \in \mathcal{H}$.
Since $\pi$ has trivial central character we have $\chi(1,1,2)=1$.
Using Tables A.12 and A.13 of \cite{RS}, one may verify
that
\begin{align}
V(0)=0 & \iff \chi \in \Hom(\mathcal{H},\C)_1, \label{pitype1eq}\\
V(0) \neq 0 & \iff \chi \in \Hom(\mathcal{H},\C)_2. \label{pitype2eq}
\end{align}
For this verification we note that if $V(0)=0$, so that $v$ is a newform in the
terminology of \cite{RS}, then $\chi(X) = \lambda_\pi$, $\chi(Y_1) = \chi(Y_2)=\mu_\pi$, and
$\chi(V) = \varepsilon_\pi$; if $V(0) \neq 0$, then $\chi(X) = \lambda_\pi+q^2-1$,
$\chi(Y_1) = \chi(Y_2)=q\lambda_\pi-q(q+1)$, and $\chi(V) =1$ (for this second case
it is useful to use some formulas from the proof of Lemma 2.3.6 of \cite{JLRS}).
Here, $\lambda_\pi, \mu_\pi$, and
$\varepsilon_\pi$ are as in \cite{RS}. In the notation of \cite{RS}, the $\pi$ as in \eqref{pitype1eq} are the
IIa, IVc, Vb, Vc, and VIc representations with unramified inducing data, while
the $\pi$ as in \eqref{pitype2eq} are the IIb, IVd, and VId representations with
unramified inducing data.

In addition, a well-known example of an element of $\Hom(\mathcal{H},\C)_2$ is the \emph{index homomorphism}
$\chi_{\mathrm{index}}$. The homomorphism $\chi_{\mathrm{index}}$ is defined as follows.
If $C \in K \backslash \Delta / K$ and $C = \sqcup_{\ell =1}^n K g_l$ is a disjoint
decomposition, then $\chi_{\mathrm{index}}(C) = n$; if $x = \sum_{C \in K \backslash \Delta /K}
n(C) C$ is an element of $\mathcal{H}$, then $\chi_{\mathrm{index}}$ is defined to be
$\chi_{\mathrm{index}}(x) = \sum_{C \in K \backslash \Delta / K} n(C) \chi_{\mathrm{index}}(C)$.
The function $\chi_{\mathrm{index}}: \mathcal{H} \to \C$ is a homomorphism by
Proposition~3.3 of \cite{S2}. We have
\begin{align}
\chi_{\mathrm{index}}(1,1,2) &=  \chi_{\mathrm{index}}(V^2) =1, \label{indexeq1}\\
\chi_{\mathrm{index}}(X) &=q^3+2 q^2 +q,\label{indexeq2} \\
\chi_{\mathrm{index}}(Y_1) &= \chi_{\mathrm{index}}(Y_2) = q^4+q^3 \label{indexeq3}
\end{align}
by Lemma~6.1.2 of \cite{RS}. In fact, $\chi_{\mathrm{index}} = \chi_\pi$ with $\pi$ the trivial
representation of $G$ (which is a IVd representation) and $\chi_\pi$ as in the previous paragraph.

Elements of $\Hom(\mathcal{H},\C)_3$ do not arise from representations of $\GSp(4,F)$; however,
we will use elements of $\Hom(\mathcal{H},\C)_3$ to help determine the center of $\mathcal{H}$ in the final
section of this paper.

We will now prove a series of lemmas required for the main result of this section.
For the next lemma we recall the automorphism $\alpha$ and the anti-automorphism
$\beta$ of $\mathcal{H}$ defined in section~\ref{structuresec}.

\begin{lemma}
\label{alphabetainvlemma} 
Let $\chi \in \Hom(\mathcal{H}, \C)$. 
\begin{enumerate}
\item[(i)] If $\chi \in \Hom(\mathcal{H},\C)_1$ or $\chi \in 
\Hom(\mathcal{H},\C)_2$, then $\chi \circ \alpha = \chi \circ \beta 
= \chi$, and hence $\chi(a,b,c) = \chi(b,a,c)$ for $(a,b,c) \in D_\Delta$.
\item[(ii)] If $\chi(1,1,2)=1$, then $\chi(V) 
\in \{\pm 1\}$ and $\chi(a,b,c)=\chi(a-1,b-1,c-2)$ for $(a,b,c) \in D_\Delta$
with $a, b \geq 1$, and $c \geq 2$. 
\end{enumerate}
\end{lemma}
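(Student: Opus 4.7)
The plan is to leverage Theorem \ref{gentheorem} (that $\mathcal{H}$ is generated by $V, X, Y_1, Y_2$) together with the explicit actions of $\alpha$ and $\beta$ on the generators recorded in \eqref{alphageneq} and \eqref{betageneq}. For part (i), I first observe that $\chi \circ \alpha$ is a ring homomorphism $\mathcal{H} \to \C$ because $\alpha$ is an algebra automorphism, and $\chi \circ \beta$ is a ring homomorphism because $\beta$ is an anti-automorphism and $\C$ is commutative. By Theorem \ref{gentheorem}, it thus suffices to check that $\chi \circ \alpha$ and $\chi \circ \beta$ agree with $\chi$ on the four generators. For $\beta$, this is immediate from \eqref{betageneq} and holds without any hypothesis on $\chi$. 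For $\alpha$, the only non-trivial check is that $\chi(\alpha(Y_1)) = \chi(Y_2)$ equals $\chi(Y_1)$, which is precisely the condition $\chi(Y_1) = \chi(Y_2)$ supplied by the definitions of $\Hom(\mathcal{H},\C)_1$ and $\Hom(\mathcal{H},\C)_2$ in Lemma \ref{charactertypelemma}. The identity $\chi(a,b,c) = \chi(b,a,c)$ then follows by applying $\chi = \chi \circ \alpha$ to $T(a,b,c)$ and invoking $\alpha(T(a,b,c)) = T(b,a,c)$ from \eqref{alphabetapropeq}.

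For part (ii), the key input is \eqref{V2eq}, which identifies $V^2$ with $T(1,1,2)$. Applying $\chi$ gives $\chi(V)^2 = \chi(1,1,2) = 1$, forcing $\chi(V) \in \{\pm 1\}$. The recursion $\chi(a,b,c) = \chi(a-1,b-1,c-2)$ is then an immediate consequence of \eqref{inductionfacteq} (applied with $(a-1,b-1,c-2)$ in place of $(a,b,c)$), which gives $T(a,b,c) = V^2 T(a-1,b-1,c-2)$, combined with $\chi(V^2) = 1$.

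There is no genuine obstacle; everything reduces to checking algebraic identities on generators, and the structural results of section \ref{structuresec} handle all the heavy lifting. The only mild subtlety worth flagging in the write-up is the implicit requirement that $(a-1,b-1,c-2) \in D_\Delta$ so that $T(a-1,b-1,c-2)$ is defined; this will need to be noted (or the recursion restated with the appropriate representative from $S_\Delta$ via the symmetries $T(a,b,c)=T(c-a,b,c)=T(a,c-b,c)$ recorded after Lemma \ref{BTdecomplemma}).
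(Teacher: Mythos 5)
Your proof is correct and follows essentially the same route as the paper: part (i) reduces to checking $\chi\circ\alpha$ and $\chi\circ\beta$ against $\chi$ on the generators $V,X,Y_1,Y_2$ via Theorem \ref{gentheorem}, \eqref{alphageneq}, and \eqref{betageneq}, and part (ii) comes from $V^2=T(1,1,2)$ together with \eqref{inductionfacteq}. Your flagged subtlety about $(a-1,b-1,c-2)\in D_\Delta$ is a fair point --- the paper simply asserts this, although it can fail (e.g.\ for $(a,b,c)=(2,2,2)$ one gets $(1,1,0)\notin D_\Delta$), so the identity should be read as holding whenever both sides are defined, exactly as you suggest.
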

\begin{proof} (i). Assume that $\chi \in \Hom(\mathcal{H},\C)_1$ or $\chi \in 
\Hom(\mathcal{H},\C)_2$. 
Then $\chi(Y_1) = \chi(Y_2)$. 
By Theorem~\ref{gentheorem} the algebra $\mathcal{H}$ is generated by
$V,X,Y_1,$ and $Y_2$, and by \eqref{alphageneq} and  \eqref{betageneq}, $(\chi \circ \alpha)( V )=\chi(V)=(\chi \circ \beta)(V)$,
$(\chi \circ \alpha) (X) =\chi(X)=(\chi \circ \beta)(X)$,
$(\chi \circ \alpha) (Y_1)=\chi(Y_2) =\chi(Y_1) = (\chi \circ \beta)(Y_1)$,
and
$(\chi \circ \alpha) (Y_2)=\chi(Y_1) =\chi(Y_2) = (\chi \circ \beta)(Y_2)$.
Hence, $\chi = \chi \circ
\alpha = \chi \circ \beta$.
The final statement follows from \eqref{alphabetapropeq}.

(ii). Assume that $\chi(1,1,2)=1$. By \eqref{V2eq},  $V^2 = T(1,1,2)$;
hence $\chi(V)^2 = \chi(1,1,2)=1$.  Let $(a,b,c) \in D_\Delta$ with $a,b
\geq 1$ and $c \geq 2$. Then $(a-1,b-1,c-2) \in
D_\Delta$. By \eqref{inductionfacteq}, $\chi(a,b,c) = \chi(T(1,1,2)T(a-1,b-1,c-2))
=\chi(1,1,2)\chi(a-1,b-1,c-2) = \chi(a-1,b-1,c-2)$. 
\end{proof}

\begin{lemma}
\label{aklemma}
Let $\chi \in \Hom(\mathcal{H},\C)_1$ or $\chi \in 
\Hom(\mathcal{H},\C)_2$ with $\chi(1,1,2)=1$ and define
$\varepsilon 
= \chi(V)$ (so that $\varepsilon=\pm 1$ by Lemma \ref{alphabetainvlemma}).
Define 
\begin{equation}
\label{akeq2}
a_k =a_k(\chi) = \chi (0,0,k) \qquad \text{for $k \in \Z_{\geq 0}$}. 
\end{equation}
Then 
\begin{equation}
\label{akeq3}
a_{k+4} = A a_{k+3} + B a_{k+2} + C a_{k+1} + D a_{k} \qquad \text{for $k 
\geq 1$,}
\end{equation}
where 
\begin{equation}
\label{akeq35}
A  = \varepsilon (1-q^2)  + \chi (X), \qquad
B =  -(q+q^3) + \varepsilon q(q-1) \chi (X)
-2q \chi(Y_1), \qquad
C  =q^3 A, \qquad
D = -q^6.
\end{equation}
\end{lemma}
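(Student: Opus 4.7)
The plan is to apply Theorems \ref{degree1multiplicationthm} and \ref{degree2multiplicationthm} to $X\cdot T(0,0,k)$ and $Y_1\cdot T(0,0,k)$ respectively (note that $(0,0,k)\in S_\Delta$ for all $k\geq 0$), evaluate via $\chi$, use the symmetries from Lemma \ref{alphabetainvlemma} to rewrite the resulting scalars in terms of $a_k$ and a single auxiliary sequence $b_k=\chi(0,1,k)$, and finally eliminate $b_k$ to obtain a linear recurrence among the $a_k$ alone.

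Concretely, for $k\geq 2$ the triple $(0,0,k)$ falls into the case $b=a$, $b+2\leq c-b$ of Theorem \ref{degree1multiplicationthm}, yielding
\[
X\,T(0,0,k)=T(0,0,k+1)+q\bigl(T(0,1,k+1)+T(1,0,k+1)\bigr)+q^3T(1,1,k+1)+(q-1)VT(0,0,k).
\]
Applying $\chi$, using $\chi(1,0,k+1)=\chi(0,1,k+1)=b_{k+1}$ from Lemma \ref{alphabetainvlemma}(i) and $\chi(1,1,k+1)=a_{k-1}$ from Lemma \ref{alphabetainvlemma}(ii), and writing $\lambda=\chi(X)$, produces a relation $(\star)$ that lets me solve for $b_{k+1}$ as an explicit linear combination of $a_{k+1}$, $a_k$, $a_{k-1}$ with coefficients in $\Z[\varepsilon,\lambda]$. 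Similarly, for $k\geq 3$ the case $b=a$, $a+3\leq c-a$ of Theorem \ref{degree2multiplicationthm} gives
\[
Y_1\,T(0,0,k)=T(0,1,k+2)+(q^2-q)T(1,1,k+2)+q^3T(2,1,k+2)+(q^2-q)VT(1,0,k+1);
\]
evaluating $\chi$ and using $\chi(1,1,k+2)=a_k$ together with $\chi(2,1,k+2)=\chi(1,0,k)=b_k$ (again by Lemma \ref{alphabetainvlemma}) yields a second relation $(\star\star)$ of the shape $\mu a_k=b_{k+2}+(q^2-q)a_k+q^3 b_k+(q^2-q)\varepsilon b_{k+1}$, where $\mu=\chi(Y_1)$.

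To finish, I will substitute the closed forms for $b_{k+2},b_{k+1},b_k$ obtained from $(\star)$ (applied at indices $k+1,k,k-1$, which requires $k\geq 3$) into $(\star\star)$, clear the factor of $2q$, and collect coefficients. This produces, for $k\geq 3$, an identity of the form $a_{k+2}=A a_{k+1}+B a_k+C a_{k-1}+D a_{k-2}$; shifting $k\mapsto k+2$ then gives the recurrence in the claimed range $k\geq 1$. The principal work is the algebraic verification, after using $\varepsilon^2=\chi(V^2)=\chi(1,1,2)=1$, that the coefficients simplify to exactly $A,B,C,D$ from \eqref{akeq35}. A structural sanity check along the way is that in $(\star)$ the coefficient of $a_{k-1}$ is $q^3$ times the coefficient of $a_{k+1}$, and this ratio $q^3$ propagates uniformly through the substitution into $(\star\star)$ (from both the $b_k$ vs.\ $b_{k+2}$ terms and the $b_{k+1}$ cross-term), which is the conceptual reason the final identity must satisfy $C=q^3A$.
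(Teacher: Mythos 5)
Your proposal is correct and follows essentially the same route as the paper: the paper likewise uses the case $b=a$, $b+2\leq c-b$ of Theorem \ref{degree1multiplicationthm} to express $\chi(0,1,k+1)$ as a linear combination of $a_{k+1},a_k,a_{k-1}$, then the case $b=a$, $a+3\leq c-a$ of Theorem \ref{degree2multiplicationthm} together with Lemma \ref{alphabetainvlemma} to get a second relation, and eliminates the auxiliary sequence to obtain the recurrence for $k\geq 3$ before reindexing. Your index bookkeeping (validity of the substitution only for $k\geq 3$, shifting to get the range $k\geq 1$) matches the paper's.
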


\begin{proof}
In this proof we will repeatedly use Lemma~\ref{alphabetainvlemma}.
If $k \geq 2$, then by  Theorem~\ref{degree1multiplicationthm}
\begin{align*}
\chi(X) \chi(0,0,k)
&=
\chi(0,0,k+1) +2 q \chi(0,1,k+1)+q^3 \chi (0,0,k-1) \\
&\quad + (q-1) \chi (V) \chi (0,0,k),
\end{align*}
or equivalently,
\begin{align}
\label{akeq1}
\chi(0,1,k+1) & = a a_{k+1} +b  a_k +c a_{k-1}
\end{align}
where 
$a = -(2q)^{-1}$, $b =(2q)^{-1} ( \chi (X) - (q-1) \varepsilon)$, and $c 
= -2^{-1} q^2$.
Let $k \geq 3$. By  Theorem~\ref{degree2multiplicationthm}, and \eqref{akeq1},
\begin{align*}
\chi(Y_1) \chi (0,0,k)
& = \chi(0,1,k+2) + (q^2-q) \varepsilon \chi (0,1,k+1) \\
&\quad + q^3 \chi (0,1,k) +(q^2-q) \chi (0,0,k)\\
\chi(Y_1) a_k 
& = a a_{k+2} +b a_{k+1} +c a_{k}
+ (q^2-q) \varepsilon \left( aa_{k+1} + ba_k + ca_{k-1} \right) \\
&\quad + q^3 \left(aa_k +b a_{k-1} + ca_{k-2} \right) +(q^2-q) a_k\\
& = a a_{k+2} +\left( b +a(q^2-q) \varepsilon \right) a_{k+1} \\
&\quad +\left( c +(q^2-q) \varepsilon  b +q^3 a +(q^2-q) \right) a_{k}\\
&\quad +\left(  (q^2-q) \varepsilon c + q^3 b \right)a_{k-1}
+q^3c a_{k-2}.
\end{align*}
Hence,
\begin{align*}
a_{k+2} & =  -\left( b +a(q^2-q) \varepsilon \right) 
a^{-1}a_{k+1} \\
&\quad -\left( c +(q^2-q) \varepsilon  b +q^3 a +(q^2-q) -\chi(Y_1)  
\right)a^{-1} 
a_{k}\\
&\quad -\left(  (q^2-q) \varepsilon c + q^3 b \right)a^{-1}a_{k-1}
-ca^{-1}q^3 a_{k-2}\\
& = A a_{k+1} + B a_k + C a_{k-1} + Da_{k-2}.
\end{align*}
This proves \eqref{akeq3}.
\end{proof}

\begin{lemma}
\label{Bklemma}
Let $\chi \in \Hom(\mathcal{H},\C)_1$ or $\chi \in 
\Hom(\mathcal{H},\C)_2$ with  $\chi(1,1,2)=1$ and define  $\varepsilon
= \chi(V)$ (so that $\varepsilon=\pm 1$ by Lemma \ref{alphabetainvlemma}).
Define
\begin{equation}
\label{Bkeq1}
B_k =B_k(\chi)= \sum_{\substack{(a,b,k) \in S_\Delta, \\ ab=0}} \chi(a,b,k) \qquad
\text{for $k \in \Z_{\geq 0}$}.
\end{equation}
If $k \geq 2$, then 
\begin{align}
&B_{k+2} - (q+1)^{-1}\left( \varepsilon (1-q^2)+\chi(X) \right) 
B_{k+1} +(q+1)^{-1}(q^2+q^3)B_{k}\nonumber \\
&\qquad= (q+1)^{-1}  q  a_{k+2}
+(q+1)^{-1}     \varepsilon  (q^2 -q) a_{k+1}
-(q+1)^{-1}q^2a_{k} \label{Bkeq2}
\end{align}
where $a_k$ is defined as in \eqref{akeq2}.
\end{lemma}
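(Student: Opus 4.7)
The plan is to derive the recursion from a parallel computation for the partial sum $S_k = \sum_{b=0}^{\lfloor k/2 \rfloor} \chi(0,b,k)$. By Lemma \ref{alphabetainvlemma}(i) we have $\chi(a,b,k)=\chi(b,a,k)$, and since the set of $(a,b,k) \in S_\Delta$ with $ab=0$ decomposes as $\{a=0\} \cup \{b=0\}$ with only $(0,0,k)$ in the overlap, $B_k = 2S_k - a_k$. Thus \eqref{Bkeq2} will follow from a three-term recursion for $S_k$, which I would obtain by applying $\chi$ to the product $X \cdot T(0,b,k+1)$ and summing over $b$.

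To this end, I would apply Theorem \ref{degree1multiplicationthm} to $X T(0,b,k+1)$, splitting into three cases. For $b=0$ the row $b=a$, $a+2 \leq c-a$ applies (valid since $k \geq 2$) with $(n_1,\ldots,n_5)=(1,q,q,q^3,q-1)$; for $1 \leq b \leq \lfloor (k+1)/2 \rfloor -1$ the generic $a<b$, $b+2 \leq c-b$ row applies with $(n_1,\ldots,n_5)=(1,q,q^2,q^3,q^2-1)$; at the boundary $b = \lfloor (k+1)/2 \rfloor$ either the $a<b$, $b=c-b$ row (for $k$ odd) or the $a<b$, $b+1=c-b$ row (for $k$ even) applies. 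Applying $\chi$, I would reduce $\chi(V T(0,b,c)) = \varepsilon \chi(0,b,c)$ and, via Lemma \ref{alphabetainvlemma}(ii) together with the symmetry $\chi(1,0,k+2) = \chi(0,1,k+2)$, rewrite $\chi(1,j+1,k+2) = \chi(0,j,k)$ for $j \geq 0$.

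Summing, three collapses occur uniformly in the parity of $k$: the coefficient of $\chi(0,j,k+2)$ is $1$ for $j=0$, $2q+1$ for $j=1$ (the extra $2q$ arising from $q\chi(0,1,k+2)+q\chi(1,0,k+2)$ at $b=0$), and $q+1$ for $2 \leq j \leq \lfloor (k+2)/2\rfloor$, which sums to $(q+1)S_{k+2} - qa_{k+2} + q\chi(0,1,k+2)$; the $V$-twisted contributions sum to $(q^2-1)\varepsilon S_{k+1} - q(q-1)\varepsilon a_{k+1}$; and the degree-$k$ contributions telescope, with the boundary enhancement at $b=\lfloor (k+1)/2\rfloor$ precisely completing the sum, to give $(q^2+q^3)S_k$. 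The result is
\begin{equation*}
\chi(X) S_{k+1} = (q+1)S_{k+2} - q a_{k+2} + q\chi(0,1,k+2) + (q^2-1)\varepsilon S_{k+1} - q(q-1)\varepsilon a_{k+1} + (q^2+q^3)S_k.
\end{equation*}

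Finally, I would eliminate $q\chi(0,1,k+2)$ using \eqref{akeq1} applied at $k+1$ (valid for $k \geq 1$), which yields $2q\chi(0,1,k+2) = \chi(X) a_{k+1} - a_{k+2} - q^3 a_k - (q-1)\varepsilon a_{k+1}$. To avoid fractions and pass cleanly from $S$ to $B$, I would multiply the displayed identity by $2$, substitute $2S_j = B_j + a_j$ throughout, cancel the pair of $\chi(X) a_{k+1}$ terms, collect the $a_j$ coefficients, and divide by $q+1$; this recovers \eqref{Bkeq2}. The main obstacle is verifying that the boundary case at $b = \lfloor (k+1)/2 \rfloor$, where Theorem \ref{degree1multiplicationthm} provides different coefficients for $k$ even versus $k$ odd, contributes the same aggregate in both parities; this works out because the summation ranges in $S_{k+2}$ and $S_k$ also shift with parity in a complementary way, so the three collapses above are parity-free.
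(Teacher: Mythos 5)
Your proposal is correct and follows essentially the same route as the paper's proof: the paper also writes $B_k = 2C_k - a_k$ with $C_k = \sum_a \chi(a,0,k)$ (equal to your $S_k$ by the $\alpha$-symmetry), sums the formula of Theorem \ref{degree1multiplicationthm} over that family with the same parity-dependent boundary bookkeeping, eliminates $q\chi(0,1,\cdot)$ via \eqref{akeq1}, and converts back with $2C_j = B_j + a_j$. The only differences are the trivial index shift and summing over the second coordinate rather than the first.
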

\begin{proof}
Again, we will repeatedly use Lemma \ref{alphabetainvlemma}.
For $k \in \Z_{\geq 0}$  define
$$
C_k = \sum_{(a,0,k) \in S_\Delta} \chi (a,0, k) = \sum_{a=0}^{\lfloor k/2 \rfloor}
\chi 
(a,0, k).
$$
Then 
$$
B_k = 2 C_k -\chi(0,0,k)= 2 C_k -a_k \qquad \text{for $k \in \Z_{\geq 
0}$}.
$$
We first consider the $C_k$. Let $k \in \Z_{\geq 0}$ with $k \geq 3$. 
Let $\delta_k=0$ if $k$ is even and $\delta_k=1$ if $k$ is odd. 
Let $a \in \Z_{\geq 0}$ be such that $0 \leq a \leq \lfloor k/2 \rfloor$. 
Then by Theorem~\ref{degree1multiplicationthm}
\begin{align*}
\chi (X) \chi (a,0,k) 
& = n_1(a,0,k) \chi (a,0,k+1) + n_2(a,0,k) \chi(a,1,k+1) \\
&\quad +n_3(a,0,k) \chi (a+1,0,k+1) +n_4(a,0,k) \chi (a+1,1,k+1) \\
&\quad +n_5(a,0,k) \varepsilon \chi (a,0,k).
\end{align*}
It follows that
\begin{align*}
&\chi(X) C_k\\
&\qquad =  \sum_{a=0}^{\lfloor k/2 \rfloor}n_1(a,0,k) \chi (a,0,k+1) 
+\sum_{a=0}^{\lfloor k/2 \rfloor} n_2(a,0,k) \chi(a,1,k+1) \\
&\qquad\quad +\sum_{a=0}^{\lfloor k/2 \rfloor}n_3(a,0,k) \chi (a+1,0,k+1) 
+\sum_{a=0}^{\lfloor k/2 \rfloor}n_4(a,0,k) \chi (a+1,1,k+1) \\
&\qquad\quad +\varepsilon \sum_{a=0}^{\lfloor k/2 \rfloor}n_5(a,0,k)  \chi 
(a,0,k).
\end{align*}
By Theorem~\ref{degree1multiplicationthm},
\begin{align*}
&\sum_{a=0}^{\lfloor k/2 \rfloor}n_1(a,0,k) \chi (a,0,k+1) \\
&\qquad =-\delta_k \chi (\lfloor (k+1)/2\rfloor,0,k+1) 
+\sum_{a=0}^{\lfloor (k+1)/2 
 \rfloor} \chi (a,0,k+1)\\ 
 &\qquad =-\delta_k \chi (\lfloor (k+1)/2\rfloor,0,k+1) 
+C_{k+1}.
\end{align*}
And:
\begin{align*}
&\sum_{a=0}^{\lfloor k/2 \rfloor} n_2(a,0,k) \chi(a,1,k+1)\\
&\qquad =n_2(0,0,k) \chi(0,1,k+1) + \sum_{a=1}^{\lfloor k/2 \rfloor} 
n_2(a,0,k) \chi(a,1,k+1)\\
&\qquad =q \chi(0,1,k+1) + q^2\sum_{a=1}^{\lfloor k/2 \rfloor}\chi(a-1,0,k-1)\\
&\qquad =q \chi(0,1,k+1) + q^2\sum_{a=0}^{\lfloor k/2 
\rfloor-1}\chi(a,0,k-1)\\
&\qquad =q \chi(0,1,k+1) -\delta_k q^2 \chi (\lfloor (k-1)/2\rfloor,0,k-1) 
+ q^2\sum_{a=0}^{\lfloor (k-1)/2 
\rfloor}\chi(a,0,k-1)\\
&\qquad =q \chi(0,1,k+1) -\delta_k q^2 \chi (\lfloor (k-1)/2\rfloor,0,k-1) 
+ q^2C_{k-1}.
\end{align*}
And:
\begin{align*}
&\sum_{a=0}^{\lfloor k/2 \rfloor}n_3(a,0,k) \chi (a+1,0,k+1) \\
&\qquad = n_3(\lfloor k/2 \rfloor,0,k) \chi(\lfloor k/2 \rfloor+1,0,k+1)\\
&\qquad\quad + \sum_{a=0}^{\lfloor k/2 \rfloor-1}n_3(a,0,k) \chi (a+1,0,k+1) \\
&\qquad = \delta_k (q+1) \chi(\lfloor k/2 \rfloor+1,0,k+1)+ 
q\sum_{a=0}^{\lfloor 
k/2 \rfloor-1} \chi (a+1,0,k+1) \\
&\qquad = \delta_k (q+1) \chi(\lfloor k/2 \rfloor+1,0,k+1)+ 
q\sum_{a=1}^{\lfloor 
k/2 \rfloor} \chi (a,0,k+1) \\
&\qquad = \delta_k (q+1) \chi(\lfloor k/2 \rfloor+1,0,k+1)\\ 
&\qquad\quad-q \chi(0,0,k+1) -q \delta_k \chi(\lfloor (k+1)/2\rfloor,0,k+1)+
q\sum_{a=0}^{\lfloor (k+1)/2 \rfloor} \chi (a,0,k+1) \\
&\qquad = \delta_k (q+1) \chi(\lfloor k/2 \rfloor+1,0,k+1)\\ 
&\qquad\quad-q \chi(0,0,k+1) -q \delta_k \chi(\lfloor (k+1)/2\rfloor,0,k+1)+
qC_{k+1}.
\end{align*}
And:
\begin{align*}
&\sum_{a=0}^{\lfloor k/2 \rfloor}n_4(a,0,k) \chi (a+1,1,k+1)\\
&\qquad = \sum_{a=0}^{\lfloor k/2 \rfloor}n_4(a,0,k) \chi (a,0,k-1)\\
&\qquad = \sum_{a=0}^{\lfloor (k-1)/2 \rfloor}n_4(a,0,k) \chi (a,0,k-1)\\
&\qquad = \delta_k q^2 \chi(\lfloor (k-1)/2\rfloor ,0,k-1) + 
q^3 \sum_{a=0}^{\lfloor (k-1)/2 \rfloor} \chi (a,0,k-1)\\
&\qquad = \delta_k q^2 \chi(\lfloor (k-1)/2\rfloor ,0,k-1) + 
q^3 C_{k-1}.
\end{align*}
And
\begin{align*}
&\varepsilon \sum_{a=0}^{\lfloor k/2 \rfloor}n_5(a,0,k)  \chi(a,0,k)\\
&\qquad = \varepsilon n_5(0,0,k) \chi(0,0,k) 
+ \varepsilon \sum_{a=1}^{\lfloor k/2 \rfloor}n_5(a,0,k)  \chi(a,0,k)\\
&\qquad = \varepsilon (q-1) \chi(0,0,k) 
+ \varepsilon (q^2-1) \sum_{a=1}^{\lfloor k/2 \rfloor}  \chi(a,0,k)\\
&\qquad = \varepsilon (q-q^2) \chi(0,0,k) 
+ \varepsilon (q^2-1) \sum_{a=0}^{\lfloor k/2 \rfloor}  \chi(a,0,k)\\
&\qquad = \varepsilon (q-q^2) \chi(0,0,k) 
+ \varepsilon (q^2-1) C_k.
\end{align*}
Adding, we find that:
\begin{align*}
\chi(X) C_k
&= (q+1)C_{k+1} +\varepsilon (q^2-1) C_k+ (q^2+q^3)C_{k-1}\\
& \quad + q \chi(0,1,k+1) -q a_{k+1} + \varepsilon (q-q^2) a_k.
\end{align*}
By   \eqref{akeq1} we have
\begin{align*}
q \chi(0,1,k+1) &= -2^{-1}\left((q-1) \varepsilon- \chi(X)  \right) a_k 
-2^{-1}a_{k+1} 
-2^{-1}q^3 a_{k-1}.
\end{align*}
Therefore, 
\begin{align*}
&2C_{k+1} +(q+1)^{-1}\left( \varepsilon  (q^2-1)-\chi(X)\right) 2C_k+ 
(q+1)^{-1}(q^2+q^3)2C_{k-1}\\
&\qquad =
(q+1)^{-1} (2q+1)a_{k+1}  -(q+1)^{-1}\left( 2\varepsilon 
(q-q^2)-\left((q-1) 
\varepsilon- \chi(X)
\right)\right) a_k\\
&\qquad\quad +(q+1)^{-1}q^3 a_{k-1}.
\end{align*}
Using $2C_j = B_j + a_j$ for $j \in \Z_{\geq 0}$ we now obtain 
\eqref{Bkeq2}. 
\end{proof}

\begin{lemma}
\label{cjsumlemma}
Let $\chi \in \Hom(\mathcal{H},\C)_1$ or $\chi \in \Hom(\mathcal{H},\C)_2$. Assume that $\chi(1,1,2)=1$
and define 
$\varepsilon = \chi(V)$ (so that $\varepsilon=\pm 1$ by Lemma \ref{alphabetainvlemma}).
For $k \in \Z_{\geq 0}$ define $a_k = a_k(\chi)$ and
$B_k=B_k(\chi)$ as in \eqref{akeq2} and \eqref{Bkeq1}. Define
\begin{equation}
\label{cjsum1000}
\left\{
\begin{array}{l}
c_0=\varepsilon q^4,\\
c_1 = -q-q^2+q^3-\varepsilon q \chi(X),\\
c_2=-\varepsilon -\chi(X),\\
c_3=1
\end{array}
\right\}
\qquad
\text{if $\chi \in \Hom(\mathcal{H},C)_1$}.
\end{equation}
and
\begin{equation}
\label{cjsum1001}
\left\{
\begin{array}{l}
c_0=-\varepsilon q^5,\\
c_1 = q^2-2q^4+\varepsilon q^2 \chi(X),\\
c_2=-\varepsilon+\varepsilon q +\varepsilon q^2-\chi(X),\\
c_3=1
\end{array}
\right\}
\qquad
\text{if $\chi \in \Hom(\mathcal{H},C)_2$}.
\end{equation}
and
\begin{equation}
\label{cjeq1}
A_k  = \sum_{j=0}^3 c_j \chi( T(q^{k+j})) \qquad \text{for $k \in 
\Z_{\geq 0}$}.
\end{equation}
Then
\begin{gather}
A_0=A_1=A_2=0, \label{cjeq10}\\
\sum_{j=0}^3 c_j a_{k+j} = 0 \qquad \text{for $k \geq 1$}, \label{ceq11}\\
\sum_{j=0}^3 c_j B_{k+j} = 0 \qquad \text{for $k \geq 2$}. \label{cjeq2}
\end{gather}
\end{lemma}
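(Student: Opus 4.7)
The plan is to establish the three identities in the order \eqref{ceq11}, \eqref{cjeq2}, \eqref{cjeq10}; the overall strategy is operator-theoretic, with each sequence ($a_k$ and $B_k$) shown to lie in the kernel of an operator $Q(S)$ (where $S$ is the shift) obtained by factoring the characteristic polynomial of a recursion supplied by Lemma \ref{aklemma} or \ref{Bklemma}.

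For \eqref{ceq11}, the sequence $(a_k)_{k \geq 1}$ satisfies an order-four linear recursion by Lemma \ref{aklemma}, with characteristic polynomial $P(x) = x^4 - Ax^3 - Bx^2 - Cx - D$. Substituting the type-specific value $\chi(Y_1) = -(1+q) - \varepsilon\chi(X)$ (case (i)) or $\chi(Y_1) = -(q^2+q^3) + q\varepsilon\chi(X)$ (case (ii)) into the formula for $B$, a direct polynomial multiplication verifies $P(x) = (x-r)\,Q(x)$, where $Q(x) := \sum_j c_j x^j$ and $r = -\varepsilon q^2$ in case (i) or $r = \varepsilon q$ in case (ii). This factorization gives $(S-r)[Q(S) a_k] = 0$ for $k \geq 1$, so $Q(S) a_k = r^{k-1}\,Q(S) a_1$, and it suffices to check $Q(S) a_1 = c_0 a_1 + c_1 a_2 + c_2 a_3 + c_3 a_4 = 0$, a polynomial identity in $\varepsilon, q, \chi(X)$ obtained after computing $a_2, a_3, a_4$ via Theorems \ref{degree1multiplicationthm} and \ref{degree2multiplicationthm} together with Lemma \ref{alphabetainvlemma}.

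For \eqref{cjeq2}, apply $Q(S)$ to both sides of the order-two recursion \eqref{Bkeq2} from Lemma \ref{Bklemma}, abbreviated $L(S) B_k = R_k$ with $R_k$ a linear combination of $a_{k+2}, a_{k+1}, a_k$. Since polynomial operators in $S$ commute, the left side becomes $L(S)[Q(S) B_k]$, while $Q(S) R_k$ is a linear combination of $Q(S) a_{k+j}$ and so vanishes for $k \geq 1$ by \eqref{ceq11}. Thus $(Q(S) B_k)_{k \geq 2}$ satisfies the order-two homogeneous recursion $L(S) = 0$, and it remains to verify the two initial values $Q(S) B_2 = Q(S) B_3 = 0$; this requires explicit computation of $B_2, \dots, B_6$ from the multiplication theorems and from Lemma \ref{alphabetainvlemma}.

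For \eqref{cjeq10}, Lemma \ref{BTdecomplemma} together with the classification of double cosets by similitude degree gives, for $k \geq 1$, $T(q^k) = \sum_{(e,f,k) \in S_\Delta} T(e,f,k) + V \sum_{(e,f,k-1) \in S_\Delta} T(e,f,k-1)$, so that $\chi(T(q^k)) = N_k + \varepsilon N_{k-1}$ with $N_k := \sum_{(e,f,k) \in S_\Delta} \chi(e,f,k)$ and $N_{-1} := 0$; the identity \eqref{inductionfacteq} combined with $\chi(1,1,2) = 1$ further yields $N_k = B_k + N_{k-2}$. Using these two reductions, each of $A_0$, $A_1$, $A_2$ becomes an explicit $\Z$-linear combination of $B_j$ and $a_j$ for small $j$; substituting the multiplication formulas reduces each to a polynomial identity in $\varepsilon, q, \chi(X)$ that evaluates to zero for the given type. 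The main obstacle is this polynomial bookkeeping: verifying $Q(S) a_1 = 0$, $Q(S) B_2 = Q(S) B_3 = 0$, and $A_0 = A_1 = A_2 = 0$ each amounts to a lengthy but routine identity handled separately in cases (i) and (ii); the conceptual content lies in the factorization $P(x) = (x-r)\,Q(x)$, which is what drops the order of the recursion from four to three and so ultimately fixes the denominator of the power series in the subsequent theorem.
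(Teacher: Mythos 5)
Your proposal is correct, and its skeleton (recursions from Lemmas \ref{aklemma} and \ref{Bklemma} plus finitely many explicit base checks) matches the paper's; the treatment of \eqref{cjeq2} is essentially identical. Where you genuinely differ is \eqref{ceq11}: the paper simply applies the order-four recursion \eqref{akeq3} linearly to the sequence $k \mapsto \sum_j c_j a_{k+j}$ and therefore must verify four initial instances ($k=1,2,3,4$, requiring $a_1$ through $a_7$), whereas you factor the characteristic polynomial as $P(x)=(x-r)Q(x)$ with $Q(x)=\sum_j c_j x^j$, which reduces the verification to the single identity $c_0a_1+c_1a_2+c_2a_3+c_3a_4=0$. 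I checked that your factorization is correct in both cases: with $\chi(Y_1)=-(1+q)-\varepsilon\chi(X)$ one gets $r=-\varepsilon q^2$, and with $\chi(Y_1)=-(q^2+q^3)+\varepsilon q\chi(X)$ one gets $r=\varepsilon q$ (both computations use $\varepsilon^2=1$). This buys a shorter computation and, more importantly, an explanation of where the coefficients $c_j$ come from, which the paper leaves opaque. For \eqref{cjeq10} you route the computation through $\chi(T(q^k))=N_k+\varepsilon N_{k-1}$ and $N_k=B_k+N_{k-2}$ — identities the paper only derives later, inside the proof of Theorem \ref{charrationalitytheorem} — while the paper instead expands each $T(q^k)$, $0\le k\le 5$, in the basis $\mathcal{B}_k$ via base-change matrices; both reduce to the same kind of routine polynomial verification. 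One point worth making explicit in your write-up: as the paper notes, the expressions for $A_0,A_1,A_2$ are not identically zero as polynomials in $\varepsilon$ and $\chi(X)$ but carry a factor $(1-\varepsilon)(1+\varepsilon)$, so the final vanishing genuinely requires $\varepsilon=\pm1$ from Lemma \ref{alphabetainvlemma}.
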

\begin{proof}
To prove that $A_0=A_1=A_2=0$ we first need to calculate $\chi(T(q^k))$ for 
$0\leq k \leq 5$.  
Let $k \in \Z_{\geq 0}$, and let $t = \mathrm{rank}_\Z(\mathcal{H}_k)$. Let
$C_1,\dots,C_t$ be a listing of the $C \in K \backslash \Delta / K$ such that 
$C \in \mathcal{H}_k$, and let $C_1',\dots,C_t'$ be a list of the elements of 
$\mathcal{B}_k$ from Theorem~\ref{genreltheorem}. Then $C_1,\dots,C_t$ and
$C_1,\dots,C_t'$ are ordered $\Z$-bases for $\mathcal{H}_k$. Let $M_k = 
(m_{ij}) \in \GL(t,\Z)$ be such that $C_i = \sum_{j=1}^t m_{ij}C_j'$ for $i \in 
\{1,\dots,t\}$. Then 
$$
\chi(T(q^k)) = \sum_{i,j=1}^t m_{ij} \chi(C_j'). 
$$
Using now  Theorem~\ref{degree1multiplicationthm},
Theorem~\ref{degree2multiplicationthm}, and a computer algebra system it is
straightforward to determine $M_k$ and calculate $\chi(T(q^k))$ for $0 \leq k 
\leq 5$. 
For example, we have
$$
\begin{bsmallmatrix}
&1&&& \vphantom{T(1,0,2)}\\
-(q+1)&-(q+1)&-q(q+1)^2&-q+1&1\vphantom{T(1,0,2)} \\
&&&1&\vphantom{T(1,0,2)}\\
&&1&&\vphantom{T(1,0,2)}\\
1&&&&\vphantom{T(1,0,2)}
\end{bsmallmatrix}
\begin{bsmallmatrix}
Y_1\vphantom{T(1,0,2)}\\
Y_2\vphantom{T(1,0,2)} \\
V^2\vphantom{T(1,0,2)} \\
VX \vphantom{T(1,0,2)}\\
X^2\vphantom{T(1,0,2)}
\end{bsmallmatrix}
=
\begin{bsmallmatrix}
T(1,0,2)\\
T(0,0,2)\\
VT(0,0,1)\\
T(1,1,2)\\
T(0,1,2)
\end{bsmallmatrix}
$$
so that, recalling that $\chi(V) = \varepsilon \in \{\pm 1\}$ and $\chi(Y_2) = 
\chi(Y_1)$, 
\begin{equation}
\label{cjeq101}
\chi (T(q^2)) = -2q \chi (Y_1) + 1 - q(q+1)^2+ 
(2-q) \varepsilon \chi (X) + \chi (X)^2.
\end{equation}
With these formulas for $\chi(T(q^k))$ for $0 \leq k \leq 5$ we then find that $A_0=A_1=A_2 =0$. In fact, for
$k \in \{0,1,2\}$, $A_k = (1-\varepsilon)(1+\varepsilon) p(\varepsilon, 
\chi(X))$ where $p$ is polynomial in two variables over $\Z$; since $\varepsilon 
=\pm 1$, this is zero. 

Next, by \eqref{akeq3} we have
$$
\sum_{j=0}^3 c_j a_{k+4+j} 
= A \sum_{j=0}^3 c_j a_{k+3+j} + B \sum_{j=0}^3 c_j a_{k+2+j} 
+ C \sum_{j=0}^3 c_j a_{k+1+j} + D \sum_{j=0}^3 c_j a_{k+j} 
$$
for $k \geq 1$ and $A, B, C,$ and $D$ as in \eqref{akeq35}. To prove \eqref{ceq11} it thus suffices to prove that
\begin{equation}
\label{cjeq13}
\sum_{j=0}^3 c_j a_{4+j} = \sum_{j=0}^3 c_j a_{3+j} 
=\sum_{j=0}^3 c_j a_{2+j} = \sum_{j=0}^3 c_j a_{1+j} =0.
\end{equation}
Again using  Theorem~\ref{degree1multiplicationthm},
Theorem~\ref{degree2multiplicationthm}, and a computer algebra system, one may
calculate $a_\ell= \chi(0,0,\ell)$ for $0 \leq \ell \leq 7$ and then verify 
\eqref{cjeq13}. 

For the last assertion of the lemma we first note that by \eqref{Bkeq2} and \eqref{ceq11},
\begin{align*}
\sum_{j=0}^3 c_j B_{k+2+j}
&=
(q+1)^{-1}\left( \varepsilon (1-q^2)+\chi(X) 
\right) \sum_{j=0}^3 c_j B_{k+1+j} \\
&\quad -(q+1)^{-1}(q^2+q^3) \sum_{j=0}^3 c_j 
B_{k+j}
\end{align*}
for $k \geq 2$. It follows that to prove \eqref{cjeq2} it suffices to prove
that 
$$
\sum_{j=0}^3 c_j B_{3+j} = \sum_{j=0}^3 c_j B_{2+j} =0.
$$
This once again follows by a calculation using  Theorem~\ref{degree1multiplicationthm},
Theorem~\ref{degree2multiplicationthm}, and a computer algebra system.
\end{proof}

We can now prove the main result of this section.
 
\begin{theorem}
\label{charrationalitytheorem}
Let $\chi \in \Hom(\mathcal{H},\C)_1$ or $\chi \in \Hom(\mathcal{H},\C)_2$.
Assume that $\chi(1,1,2)=1$ and define $\varepsilon = \chi(V)$
(so that $\varepsilon=\pm 1$ by Lemma \ref{alphabetainvlemma}).
Let $t$ be an indeterminate. Then in $\C[[t]]$ we have
\begin{equation}
\sum_{k=0}^\infty \chi (T(q^k)) t^k = \dfrac{P_\chi(t)}{Q_\chi(t)}
\end{equation}
where
\begin{align*}
P_\chi(t)& =
\begin{cases}
1-q^2 t^2 & \text{if $\chi \in \Hom(\mathcal{H},\C)_1$},\\
1 + \varepsilon (q  + q^2 )t + q^3 t^2 &\text{if $\chi \in \Hom(\mathcal{H},\C)_2$,}
\end{cases} \\
Q_\chi(t)&=
\begin{cases}
1 -(\varepsilon + \chi(X)) t + (q^3-q^2-q-\varepsilon q \chi(X) )t + \varepsilon q^4 t^3&\\
\qquad \text{if $\chi \in \Hom(\mathcal{H},\C)_1$}, & \\
1-\left(\varepsilon (1-q-q^2) + \chi (X) \right)t +\left(q^2-2q^4 +\varepsilon q^2 \chi(X) \right) t^2 -\varepsilon q^5 t^3 & \\
\qquad \text{if $\chi \in \Hom(\mathcal{H},\C)_2$}. &
\end{cases}
\end{align*}
\end{theorem}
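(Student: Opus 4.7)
The plan is to establish the identity $Q_\chi(t) F(t) = P_\chi(t)$ in $\C[[t]]$, where $F(t) = \sum_{k\geq 0} \chi(T(q^k)) t^k$ and $Q_\chi(t) = c_3 + c_2 t + c_1 t^2 + c_0 t^3$ with the $c_j$ of Lemma \ref{cjsumlemma}. The heart of the argument is to show that $A_k := \sum_{j=0}^3 c_j \chi(T(q^{k+j}))$ vanishes for every $k \geq 0$, since then the coefficient of $t^n$ in $Q_\chi(t) F(t)$ is $A_{n-3}$ for $n \geq 3$, and only the $t^0, t^1, t^2$ coefficients can survive.

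To set up this vanishing, I would first use Lemma \ref{BTdecomplemma} to decompose, for $k \geq 1$,
\begin{equation*}
T(q^k) = \sum_{(a,b,k) \in S_\Delta} T(a,b,k) + V \sum_{(a,b,k-1) \in S_\Delta} T(a,b,k-1),
\end{equation*}
using \eqref{Vouteq} to rewrite the $\delta=1$ cosets and noting that $\lambda(wd(a,b,c)) = \varpi^{c+1}$ places $Kwd(a,b,c)K$ in $\mathcal{H}_{c+1}$. Applying $\chi$ and using $\chi(V)=\varepsilon$ gives $\chi(T(q^k)) = S_k + \varepsilon S_{k-1}$ for $k \geq 1$, where $S_n := \sum_{(a,b,n)\in S_\Delta} \chi(a,b,n)$. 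Next, Lemma \ref{alphabetainvlemma}(ii) iterated yields $\chi(a,b,n) = \chi(a-m,b-m,n-2m)$ for $m=\min(a,b)$; stratifying the defining sum of $S_n$ by this $m$ produces $S_n = \sum_{m=0}^{\lfloor n/2\rfloor} B_{n-2m}$, and in particular the telescoping identity $S_n - S_{n-2} = B_n$ for $n \geq 2$.

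With this telescoping in hand, for $k \geq 3$ I compute
\begin{equation*}
A_k - A_{k-2} = \sum_{j=0}^3 c_j (S_{k+j} - S_{k+j-2}) + \varepsilon \sum_{j=0}^3 c_j (S_{k+j-1} - S_{k+j-3}) = \sum_{j=0}^3 c_j B_{k+j} + \varepsilon \sum_{j=0}^3 c_j B_{k+j-1},
\end{equation*}
and both inner sums vanish by \eqref{cjeq2} (their indices start at $k \geq 2$ and $k-1 \geq 2$ respectively). Combined with $A_0 = A_1 = A_2 = 0$ from \eqref{cjeq10}, a straightforward induction on $k$ forces $A_k = 0$ for all $k \geq 0$, so $Q_\chi(t) F(t)$ is a polynomial of degree at most two.

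It remains to identify this polynomial with $P_\chi(t)$. From Lemma \ref{BTdecomplemma} one has $T(q) = X + V$, hence $\chi(T(q^1)) = \chi(X) + \varepsilon$, while $\chi(T(q^2))$ is given by \eqref{cjeq101}; specializing $\chi(Y_1)$ via case (i) or (ii) of Lemma \ref{charactertypelemma} renders everything in terms of $\varepsilon$ and $\chi(X)$, and $\varepsilon^2 = 1$ (by Lemma \ref{alphabetainvlemma}(ii)) eliminates higher powers of $\varepsilon$. Computing the $t^0$, $t^1$, and $t^2$ coefficients of $Q_\chi(t) F(t)$ then reduces to polynomial arithmetic in $\varepsilon$, $\chi(X)$, and $q$; carrying this out in each of the two cases confirms the stated forms of $P_\chi(t)$. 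This terminal calculation is the main place where case-by-case algebra is required, but it is essentially forced by the shape of the coefficients $c_0, c_1, c_2$ recorded in \eqref{cjsum1000} and \eqref{cjsum1001} and serves principally as a consistency check.
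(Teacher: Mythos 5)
Your proposal is correct and follows essentially the same route as the paper: write $Q_\chi(t)\sum_k\chi(T(q^k))t^k$ so that the coefficient of $t^n$ for $n\geq 3$ is $A_{n-3}$, use the decomposition $T(q^k)=T_0(q^k)+VT_0(q^{k-1})$ together with the telescoping $\chi(T_0(q^k))-\chi(T_0(q^{k-2}))=B_k$, and then kill the $A_k$ via Lemma \ref{cjsumlemma}. The only (immaterial) difference is that you derive the second-order recursion $A_k=A_{k-2}$ for $k\geq 3$ directly, whereas the paper first establishes $\chi(T(q^k))=\varepsilon\chi(T(q^{k-1}))+B_k$ and hence the first-order relation $A_{k+1}=A_k+\sum_{j=0}^3 c_jB_{k+1+j}$; both close the induction from $A_0=A_1=A_2=0$.
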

\begin{proof}
Define $c_0,c_1,c_2,$ and $c_3$ as in \eqref{cjsum1000} and \eqref{cjsum1001}. Then $Q_\chi(t) = \sum_{j=0}^3 c_{3-i} t^i$. A calculation
shows that
\begin{align*}
Q_\chi(t) \sum_{k=0}^\infty \chi (T(q^k)) t^k
=R(t)+ \sum_{\ell=3}^\infty A_{\ell-3} t^\ell
\end{align*}
where we define
\begin{align*}
R(t)& = c_3 +\left(c_3 \chi(T(q)) + c_2\right) t +
\left(c_3 \chi (T(q^2)) + c_2 \chi (T(q)) +c_1\right) t^2,\\
A_k & = \sum_{j=0}^3 c_j \chi( T(q^{k+j})) \qquad \text{for $k \in 
\Z_{\geq 0}$}.
\end{align*}
To prove the theorem it will suffice to prove that $R(t) =P_\chi(t)$ and that
$A_k =0$ for $k \geq 0$.  A calculation using \eqref{cjeq101}, $\chi(T(q)) =
\varepsilon + \chi (X)$,  \eqref{cjsum1000},  and \eqref{cjsum1001} verifies
that $R(t) = P_\chi(t)$.
Next, for $k \in \Z_{\geq 0}$ define
$$
T_0(q^k) = \sum_{(a,b,k) \in S_\Delta} T(a,b,k).
$$
Assume that $k \in \Z_{\geq 0}$ and $k \geq 2$.
If $(a,b,k) \in S_\Delta$ and $a,b >0$, then $T(a,b,k) = T(1,1,2)
T(a-1,b-1,k-2)$ so that 
$$
\chi (a,b,k) = \chi (T(1,1,2)) \chi ( 
T(a-1,b-1,k-2)) = \chi (a-1,b-1,k-2).
$$
Hence, 
\begin{align}
\chi (T_0(q^k)) 
& = \sum_{(a,b,k) \in S_\Delta} \chi(a,b,k)\nonumber\\
& = \sum_{\substack{(a,b,k) \in S_\Delta, \\ a,b>0}} \chi(a,b,k) +
\sum_{\substack{(a,b,k) \in S_\Delta, \\ ab=0}} \chi(a,b,k)\nonumber \\
& = \sum_{\substack{(a,b,k) \in S_\Delta, \\ a,b>0}} \chi(a-1,b-1,k-2) +
\sum_{\substack{(a,b,k) \in S_\Delta, \\ ab=0}} \chi(a,b,k)\nonumber \\
& = \sum_{\substack{(a,b,k-2) \in S_\Delta }} \chi(a,b,k-2) +
\sum_{\substack{(a,b,k) \in S_\Delta, \\ ab=0}} \chi(a,b,k)\nonumber \\
& = \chi(T_0(q^{k-2}) )+ B_k \label{creq1}
\end{align}
where $B_k$ is defined as in \eqref{Bkeq1}. 
Now by Lemma~\ref{BTdecomplemma}
\begin{equation}
\label{creq3}
T(q^k) = VT_0(q^{k-1}) + T_0(q^k).
\end{equation}
Applying $\chi$ to \eqref{creq3} and using $\chi(V) = \varepsilon$ and 
\eqref{creq1} we obtain:
\begin{align}
\chi(T(q^k)) &= \varepsilon \chi (T_0(q^{k-1}) )+ \chi( T_0(q^k))\nonumber \\
& =  \varepsilon \chi (T_0(q^{k-1}) ) + \chi(T_0(q^{k-2}) )+ B_k.\label{creq4}
\end{align}
Assume further that $k \geq 3$. Starting from \eqref{creq4} and using 
\eqref{creq3} again, we have:
\begin{align*}
\chi(T(q^k)) 
& =  \varepsilon \chi (T_0(q^{k-1}) ) + \chi(T_0(q^{k-2}) )+ B_k\\
& =  \varepsilon \chi \left( T(q^{k-1}) -V T_0(q^{k-2} )\right) + 
\chi(T_0(q^{k-2}) )+ B_k\\
& =  \varepsilon \left( \chi ( T(q^{k-1}))  -\varepsilon \chi( T_0(q^{k-2} 
))\right) + 
\chi(T_0(q^{k-2}) )+ B_k\\
& =  \varepsilon  \chi  (T(q^{k-1}))  -\varepsilon^2 \chi( T_0(q^{k-2} 
)) + 
\chi(T_0(q^{k-2}) )+ B_k\\
& =  \varepsilon  \chi  (T(q^{k-1}))  -\chi( T_0(q^{k-2} 
)) + 
\chi(T_0(q^{k-2}) )+ B_k\\
& =  \varepsilon  \chi  (T(q^{k-1})) + B_k.
\end{align*}
Note that we used $\varepsilon^2=1$. Thus
$$
\chi(T(q^{k+1})) = \varepsilon \chi(T(q^{k})) + B_{k+1} \qquad \text{for $k 
\geq 2$.}
$$
It follows that for $k \geq 2$,
\begin{align*}
\sum_{j=0}^3 c_j \chi(T(q^{k+1+j})) & = \sum_{j=0}^3 c_j \chi(T(q^{k+j})) + 
\sum_{j=0}^3 c_j B_{k+1+j}\\
A_{k+1} & = A_k + \sum_{j=0}^3 c_j B_{k+1+j}.
\end{align*}
Using this equation, \eqref{cjeq10}, and \eqref{cjeq2} we see that $A_k =0$ for 
$k \geq 0$, which completes the proof.
\end{proof}

We make two remarks concerning Theorem \ref{charrationalitytheorem}.
First, we note that the expression for $Q_\chi(t)$ in Theorem \ref{charrationalitytheorem}
when $\chi \in \Hom(\mathcal{H},\C)_1$ is as expected. To explain this, assume
that $(\pi,V)$ is an irreducible, admissible representation of $\GSp(4,F)$ with
trivial central character and paramodular level $N_\pi=1$, so that the vector
space $V(1)$ in \eqref{level1eq} is one-dimensional. Let $\varphi_\pi$ be the
$L$-parameter of $\pi$ as in \cite{RS}. Then
by Theorem 7.5.9 of \cite{RS} the $L$-factor of $\varphi_\pi$ is
$L(s,\varphi_\pi) = D(q^{-s})$ where
\begin{equation}
\label{Ddefeq}
D(t)
=
1 - q^{-3/2} \left(\chi(X) + \chi(V) \right) t +
\left( q^{-2} \chi(Y_1) +1 \right) t^2
+ \chi(V) q^{-1/2} t^3.
\end{equation}
Using  the definition of $\Hom(\mathcal{H},\C)_1$ and Theorem \ref{charrationalitytheorem}  we have
\begin{equation}
\label{PDeq}
Q_\chi (t ) = D(q^{3/2}t).
\end{equation}
This may be regarded as an expected
equality because if $\pi$ is unramified, then the analogue of \eqref{PDeq}
holds. See Theorem 2 of \cite{S1} and Theorem 7.5.9 of \cite{RS}.

Second, we remark that the assertion of Theorem 2 of \cite{S1} is an identity
in the unramified Hecke algebra of $\GSp(4,F)$, while Theorem \ref{charrationalitytheorem}
provides an identity  after the application of $\chi$ to elements of $\mathcal{H}$;
consequently, one might speculate that an identity in $\mathcal{H}$ is behind
Theorem \ref{charrationalitytheorem}.
We have not eliminated this possibility, but we can show that the natural
 guess does not hold.
In light of the previous paragraph,
one would expect that $\sum_{k=0}^\infty T(q^k) t^k$
is equal to
\begin{equation}
\label{guesseq}
\left(1 - q^2 V^2 t^2\right) \left(
1 -  \left(X + V \right) t +
\left( q Y_1 + q^3 V^2 \right) t^2
+ V^3 q^4 t^3 \right)^{-1}.
\end{equation}
However, Theorem \ref{charrationalitytheorem} rules this out. For assume that
$\sum_{k=0}^\infty T(q^k) t^k$ is equal to \eqref{guesseq}, and let
$\chi = \chi_{\mathrm{index}}$. Then by our assumption, \eqref{indexeq1},
\eqref{indexeq2}, and \eqref{indexeq3},
$\sum_{k=0}^\infty \chi(T(q^k)) t^k$
is equal to
\begin{equation}
\label{guessindeq}
\left(1 - q^2 t^2\right) \left(
1 -  \left(1 + q + 2q^2 + q^3\right) t +
\left( q^3+q^4+ q^5 \right) t^2
+  q^4 t^3 \right)^{-1}.
\end{equation}
On the other hand, by Theorem \ref{charrationalitytheorem} we find that
$\sum_{k=0}^\infty \chi(T(q^k)) t^k$ is equal to
\begin{equation}
\label{chiindeq}
\left(1+(q+q^2)t+q^3 t^2\right)
\left( 1 - \left( 1+q^2+q^3 \right) t
+\left( q^2+q^3+q^5 \right) t^2 - q^5 t^3
\right)^{-1}.
\end{equation}
Since \eqref{guessindeq} is not equal to \eqref{chiindeq} we have a contradiction.

\begin{corollary}
If $k \in \Z_{\geq 0}$, then
\begin{equation}
\chi_{\mathrm{index}} (T(q^k))
=
\dfrac{1+q^2 -2q^{2k+1}(1+q+q^2)+q^{3k+1}(1+q+q^2+q^3)}{(q-1)^2(1+q+q^2)}.
\end{equation}
\end{corollary}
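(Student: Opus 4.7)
The plan is to apply Theorem \ref{charrationalitytheorem} directly to $\chi=\chi_{\mathrm{index}}$ and expand the resulting rational function via partial fractions. By the remarks preceding the corollary, $\chi_{\mathrm{index}} \in \Hom(\mathcal{H},\C)_2$ with $\chi_{\mathrm{index}}(1,1,2)=1$, so the theorem applies; here $\varepsilon=\chi_{\mathrm{index}}(V)=1$, $\chi_{\mathrm{index}}(X)=q^3+2q^2+q=q(q+1)^2$, and $\chi_{\mathrm{index}}(Y_1)=q^4+q^3$ by \eqref{indexeq1}--\eqref{indexeq3}.

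First, I would substitute these values into the formulas for $P_\chi(t)$ and $Q_\chi(t)$ from Theorem \ref{charrationalitytheorem} and check that they factor nicely. A direct computation gives
\begin{gather*}
P_{\chi_{\mathrm{index}}}(t) = 1+(q+q^2)t+q^3 t^2 = (1+qt)(1+q^2 t),\\
Q_{\chi_{\mathrm{index}}}(t) = 1-(1+q^2+q^3)t+(q^2+q^3+q^5)t^2-q^5 t^3 = (1-t)(1-q^2 t)(1-q^3 t),
\end{gather*}
which I would verify by expanding the right-hand sides.

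Second, I would perform partial fraction decomposition
\begin{equation*}
\frac{(1+qt)(1+q^2 t)}{(1-t)(1-q^2 t)(1-q^3 t)}
= \frac{A}{1-t} + \frac{B}{1-q^2 t} + \frac{C}{1-q^3 t},
\end{equation*}
evaluating $A$, $B$, $C$ by the standard cover-up method at $t=1,q^{-2},q^{-3}$. After simplification one obtains
\begin{equation*}
A = \frac{1+q^2}{(q-1)^2(1+q+q^2)},\qquad B = -\frac{2q}{(q-1)^2},\qquad C = \frac{q(1+q+q^2+q^3)}{(q-1)^2(1+q+q^2)}.
\end{equation*}

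Third, expanding each term as a geometric series in $t$ and reading off the coefficient of $t^k$ yields $\chi_{\mathrm{index}}(T(q^k)) = A + Bq^{2k} + Cq^{3k}$, and placing the three summands over the common denominator $(q-1)^2(1+q+q^2)$ gives exactly the claimed formula. No step here is a real obstacle: the work is entirely a factoring and partial-fractions computation once one has Theorem \ref{charrationalitytheorem} in hand, and the mildly delicate point is just the arithmetic in verifying $Q_{\chi_{\mathrm{index}}}(t)=(1-t)(1-q^2t)(1-q^3t)$, which I would check by direct multiplication.
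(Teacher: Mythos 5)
Your proposal is correct and is precisely the "standard techniques" the paper invokes in its one-line proof: substitute \eqref{indexeq1}--\eqref{indexeq3} into Theorem \ref{charrationalitytheorem} (indeed recovering \eqref{chiindeq}), factor, and expand by partial fractions; all of your factorizations and the values of $A$, $B$, $C$ check out. The only micro-point worth noting is that \eqref{indexeq1} gives $\chi_{\mathrm{index}}(V)^2=1$ rather than $\chi_{\mathrm{index}}(V)=1$ directly, but $\varepsilon=1$ follows since $\chi_{\mathrm{index}}(V)$ is a coset count, so nothing is missing.
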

\begin{proof}
This follows from \eqref{indexeq1}, \eqref{indexeq2}, \eqref{indexeq3}, and Theorem~\ref{charrationalitytheorem} via standard techniques.
\end{proof}
The first few values of $\chi_{\mathrm{index}} (T(q^k))$ are
$$
\begin{array}{ll}
\toprule
k & \chi_{\mathrm{index}} (T(q^k)) \\
\midrule
0 & 1 \\
1 & 1 + q + 2 q^2 + q^3 \\
2 & 1 + q + 2 q^2 + 3 q^3 + 3 q^4 + 2 q^5 + q^6\\
3 & 1 + q + 2 q^2 + 3 q^3 + 3 q^4 + 4 q^5 + 5 q^6 + 3 q^7 + 2 q^8 + q^9\\
\bottomrule
\end{array}
$$

\section{The center}

In this section we determine the center $Z(\mathcal{H})$ of $\mathcal{H}$.
To begin, we note that $Z(\mathcal{H})$ is a graded algebra.
For $k \in \Z_{\geq 0}$ define
\begin{equation}
\label{Akdefeq}
\mathcal{A}_k = Z (\mathcal{H}) \cap \mathcal{H}_k.
\end{equation}
Then  $\mathcal{A}_k \mathcal{A}_j \subset \mathcal{A}_{k+j}$
for $k,j \in \Z_{\geq 0}$ and also
\begin{equation}
\label{Zgradedeq}
Z(\mathcal{H}) = \bigoplus_{k \in \Z_{\geq 0}} \mathcal{A}_k.
\end{equation}

\begin{lemma}
\label{Vcancellationlemma}
Let $R \in \mathcal{H}$ and $j \in \Z_{\geq 0}$. If $V^j R =0$ or $R V^j=0$, then $R=0$.
\end{lemma}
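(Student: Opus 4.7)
The plan is to exploit the explicit basis $\mathcal{B}_k$ from Theorem \ref{genreltheorem}, namely
$\mathcal{B}_k = \{V^{e_1} X^{e_2} Y_1^{e_3} Y_2^{e_4} : e_1+e_2+2(e_3+e_4)=k,\ e_3 e_4 = 0\}$,
together with the commutation relations $VX=XV$, $VY_1=Y_2 V$, and $VY_2=Y_1 V$ from Lemma \ref{relations}, to show that both left and right multiplication by $V$ send the basis $\mathcal{B}_k$ injectively into $\mathcal{B}_{k+1}$. Once this is established, $V$ (and hence each $V^j$) cannot annihilate any nonzero element.

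First, since $V$ commutes with itself, for the left case, for any $b=V^{e_1} X^{e_2} Y_1^{e_3} Y_2^{e_4} \in \mathcal{B}_k$ we have $V \cdot b = V^{e_1+1} X^{e_2} Y_1^{e_3} Y_2^{e_4}$, which (since the condition $e_3 e_4 = 0$ is untouched) lies in $\mathcal{B}_{k+1}$. The map $b \mapsto Vb$ is injective on $\mathcal{B}_k$ because one can read off $e_1+1, e_2, e_3, e_4$ from the image and these determine $b$. For the right case, we need to move $V$ from the right past the $Y_i$'s. Using $Y_1 V = V Y_2$ inductively gives $Y_1^{n} V = V Y_2^{n}$ for every $n \geq 0$, and similarly $Y_2^{n} V = V Y_1^{n}$. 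Combined with $XV=VX$, we obtain
\begin{equation*}
V^{e_1} X^{e_2} Y_1^{e_3} Y_2^{e_4} \cdot V = V^{e_1+1} X^{e_2} Y_2^{e_3} Y_1^{e_4}.
\end{equation*}
Because $e_3 e_4 = 0$, the right-hand side has its $Y_1$- and $Y_2$-exponents swapped, but the swap preserves the condition $e_3 e_4 = 0$, so the product lies in $\mathcal{B}_{k+1}$. Injectivity of $b \mapsto bV$ again follows since $(e_1, e_2, e_3, e_4)$ is recoverable from the image (the roles of $Y_1$ and $Y_2$ are swapped consistently).

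With these two facts in hand, suppose $VR=0$. Write $R=\sum_i n_i b_i$ with $b_i$ distinct elements of the various $\mathcal{B}_k$. Then $0 = VR = \sum_i n_i (Vb_i)$, and since the $Vb_i$ are distinct basis elements of $\mathcal{H}$, each $n_i=0$, giving $R=0$. The same argument with $b_i V$ handles $RV=0$. For general $j$, the case $j=0$ is trivial, and for $j\geq 1$ we conclude by induction: $V^j R = V(V^{j-1}R)=0$ forces $V^{j-1}R=0$ and hence $R=0$; the right-hand version is symmetric.

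The main (only) obstacle is verifying that right multiplication by $V$ keeps one inside $\mathcal{B}_{k+1}$, which hinges on the relations $VY_1=Y_2V$ and $VY_2=Y_1V$ producing exactly the $Y_1 \leftrightarrow Y_2$ swap that preserves the defining condition $e_3 e_4 = 0$ of $\mathcal{B}_{k+1}$; this is why the basis from Theorem \ref{genreltheorem} is chosen symmetrically in $Y_1$ and $Y_2$.
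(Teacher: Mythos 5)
Your proof is correct. For the left-multiplication case it coincides with the paper's argument, which likewise deduces $R=0$ from $V^jR=0$ by appealing to the basis $\mathcal{B}_k$ of Theorem \ref{genreltheorem} (left multiplication by $V^j$ just increments the $V$-exponent, hence permutes basis elements injectively). Where you diverge is the right-multiplication case: the paper disposes of $RV^j=0$ in one line by applying the anti-automorphism $\beta$, which fixes each of $V,X,Y_1,Y_2$ by \eqref{betageneq} and reverses products, so $RV^j=0$ becomes $V^j\beta(R)=0$ and the left case applies. You instead push $V$ through a basis monomial by hand using $Y_1V=VY_2$, $Y_2V=VY_1$, and $XV=VX$, obtaining $V^{e_1}X^{e_2}Y_1^{e_3}Y_2^{e_4}\cdot V=V^{e_1+1}X^{e_2}Y_2^{e_3}Y_1^{e_4}$, and you correctly observe that the constraint $e_3e_4=0$ makes the result again a member of $\mathcal{B}_{k+1}$ (one of the two $Y$-factors is trivial, so the apparent reordering is harmless) and that the induced map on basis elements is injective. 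Both routes are sound; the paper's is shorter and reuses structure already built ($\beta$), while yours is self-contained modulo Theorem \ref{genreltheorem} and Lemma \ref{relations} and makes explicit exactly why the symmetric choice of basis (the condition $e_3e_4=0$) is what saves the right-hand case. Your reduction from general $j$ to $j=1$ by induction and your handling of components in different degrees are both fine.
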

\begin{proof}
We may assume that $R \in \mathcal{H}_k$ for some $k \in \Z_{\geq 0}$. If $V^j R =0$, then $R=0$
by Theorem~\ref{genreltheorem}. If $RV^j=0$, then $V^j \beta(R) =0$ where $\beta: \mathcal{H} \to \mathcal{H}$
is the anti-automorphism of $\mathcal{H}$ defined in section~\ref{structuresec}.
Hence, $\beta(R)=0$, so that $R=0$.
\end{proof}

\begin{lemma}
\label{centerform}
If $Z\in Z(\mathcal{H})$, then $Z$ is a $\Z$-linear combination of elements of the form $V^aX^b(Y_1^c+Y_2^c)$, where $a,b,c\in\mathbb{Z}_{\ge 0}$.
\end{lemma}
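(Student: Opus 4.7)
The plan is to show that every central element is fixed by the automorphism $\alpha$ (swapping $Y_1$ and $Y_2$, fixing $V$ and $X$), and then read off the shape of $\alpha$-invariants from the $\Z$-basis produced in Theorem~\ref{genreltheorem}.

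First I would establish the identity $VRV = \alpha(R)V^2$ for every $R\in\mathcal{H}$. By $\Z$-linearity it suffices to check this on $R=T(g)$ for $g\in\Delta$. Since $w$ normalizes $K$, Lemma~\ref{normalizerlemma} applied twice gives $VT(g)V = T(w)T(g)T(w) = T(wgw)$. Writing $wgw = (wgw^{-1})\cdot w^2$ and using that $w^2=d(1,1,2)\in Z$ is central in $G$ (so $w^2K=Kw^2$), one more application of Lemma~\ref{normalizerlemma} produces $T(wgw) = T(wgw^{-1})\,T(w^2) = \alpha(T(g))\,V^2$, as desired.

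Next I would use this to show $\alpha(Z)=Z$ whenever $Z\in Z(\mathcal{H})$. Indeed, centrality of $Z$ together with centrality of $V^2$ gives $VZV = ZV^2 = V^2Z$, so combining with the identity of the previous step yields $\alpha(Z)V^2 = ZV^2$, i.e.\ $(\alpha(Z)-Z)V^2 = 0$. Lemma~\ref{Vcancellationlemma} then forces $\alpha(Z)=Z$.

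Finally I would expand $Z$ in the $\Z$-basis $\bigsqcup_k \mathcal{B}_k$ of Theorem~\ref{genreltheorem}:
\begin{equation*}
Z \;=\; \sum_{\substack{e_1,e_2,e_3,e_4\geq 0\\ e_3e_4=0}} c_{e_1,e_2,e_3,e_4}\,V^{e_1}X^{e_2}Y_1^{e_3}Y_2^{e_4}.
\end{equation*}
By \eqref{alphageneq}, $\alpha$ permutes this basis: it fixes the monomials with $e_3=e_4=0$ and transposes $V^{e_1}X^{e_2}Y_1^{c}\leftrightarrow V^{e_1}X^{e_2}Y_2^{c}$ for each $c\geq 1$. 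Equating coefficients in $Z=\alpha(Z)$ gives $c_{e_1,e_2,c,0}=c_{e_1,e_2,0,c}$ for all $c\geq 1$, so
\begin{equation*}
Z \;=\; \sum_{e_1,e_2} c_{e_1,e_2,0,0}\,V^{e_1}X^{e_2} \;+\; \sum_{\substack{e_1,e_2\\ c\geq 1}} c_{e_1,e_2,c,0}\,V^{e_1}X^{e_2}(Y_1^{c}+Y_2^{c}),
\end{equation*}
which is the asserted form (the first sum being the $c=0$ contribution).

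The main obstacle is the first step: converting the commutation relation $VZ=ZV$ into the much stronger symmetry $\alpha(Z)=Z$. Once the identity $VRV = \alpha(R)V^2$ is isolated, Lemma~\ref{Vcancellationlemma} does all the work and the remainder of the argument is bookkeeping against the explicit basis of Theorem~\ref{genreltheorem}.
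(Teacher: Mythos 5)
Your proof is correct, and while the underlying mechanism is the same as the paper's --- the single relation $VZ=ZV$ forces the $Y_1\leftrightarrow Y_2$ symmetry, which is then read off against the basis of Theorem~\ref{genreltheorem} --- you reach that symmetry by a genuinely different route. The paper expands $Z$ in monomials $V^aX^bY_i^c$ and pushes $V$ through using \eqref{r1eq}--\eqref{r3eq} (in effect $Y_1^cV=VY_2^c$ and $Y_2^cV=VY_1^c$), then compares coefficients directly; it invokes neither $\alpha$ nor Lemma~\ref{Vcancellationlemma}. You instead isolate the identity $VRV=\alpha(R)V^2$, and your derivation of it is valid: $T(w)T(g)T(w)=T(wgw)=T(wgw^{-1})T(w^2)=\alpha(T(g))V^2$ by two applications of Lemma~\ref{normalizerlemma}, using that $w$ normalizes $K$, that $wgw^{-1}\in\Delta$, and that $w^2=d(1,1,2)$ is central; Lemma~\ref{Vcancellationlemma} (which precedes this lemma in the paper, so is available) then cancels $V^2$ to give $\alpha(Z)=Z$. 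Your version buys a reusable structural fact --- the inner action of $V$ realizes the outer automorphism $\alpha$ modulo the central element $V^2$ --- and makes the final coefficient comparison transparent, since $\alpha$ visibly permutes the basis $\mathcal{B}_k$; the paper's version stays entirely inside the presentation $R/I$ and never appeals to the group-level construction of $\alpha$. One shared cosmetic point: for $c=0$ one has $V^aX^b(Y_1^0+Y_2^0)=2V^aX^b$, so the $e_3=e_4=0$ terms are literally of the stated form only up to this factor of $2$; the paper's own proof has the same feature, and it is harmless for the way the lemma is used in Theorem~\ref{centertheorem}.
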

\begin{proof}
We may assume that $Z \in \mathcal{H}_k$ for some $k \in \Z_{\geq 0}$.
By Theorem~\ref{genreltheorem} we may write
$$
Z=\underset{\substack{a,b,c_1\in\mathbb{Z}_{\ge 0},\\ a+b+2c_1=k}}{\sum}n_1(a,b,c_1)V^aX^bY_1^{c_1}+\underset{\substack{a,b,c_2\in\mathbb{Z}_{\ge 0},\\ a+b+2c_2=k}}{\sum}n_2(a,b,c_2)V^aX^bY_2^{c_2}
$$
where $n_1(a,b,c),n_2(a,b,c) \in \Z$.
As $Z\in Z(\mathcal{H})$, we have that $ZV=VZ$, and hence by \eqref{r1eq}, \eqref{r2eq}, and \eqref{r3eq}
$$
\underset{\substack{a,b,c_1\in\mathbb{Z}_{\ge 0},\\ a+b+2c_1=k}}{\sum}n_1(a,b,c_1)V^{a+1}X^bY_2^{c_1}+\underset{\substack{a,b,c_2\in\mathbb{Z}_{\ge 0},\\ a+b+2c_2=k}}{\sum}n_2(a,b,c_2)V^{a+1}X^bY_1^{c_2}
$$
is equal to
$$
\underset{\substack{a,b,c_1\in\mathbb{Z}_{\ge 0},\\ a+b+2c_1=k}}{\sum}n_1(a,b,c_1)V^{a+1}X^bY_1^{c_1}+\underset{\substack{a,b,c_2\in\mathbb{Z}_{\ge 0},\\ a+b+2c_2=k}}{\sum}n_2(a,b,c_2)V^{a+1}X^bY_2^{c_2}.
$$
The assertion of the lemma follows now from Theorem~\ref{genreltheorem}.
\end{proof}

\begin{lemma}
\label{Qresult}
Let $Q\in \mathcal{H}$ be such that
\begin{equation}
\label{Qresulteq}
Y_1Q=QY_2,\qquad Y_2Q=QY_1,\qquad VQ=QV, \qquad XQ=QX.
\end{equation}
Then $Q=VZ$ for some $Z\in Z(\mathcal{H})$.
\end{lemma}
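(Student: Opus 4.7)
My plan is to first show $Q\in V\mathcal{H}$ by passing to the quotient $\mathcal{H}/V\mathcal{H}$, which turns out to be a commutative ring, and then to use Lemma \ref{Vcancellationlemma} repeatedly to upgrade the resulting factor $Z$ to a central element.

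The first step is to check that $VR = \alpha(R)V$ for every $R\in\mathcal{H}$. On the generators $V,X,Y_1,Y_2$ this is exactly \eqref{r1eq}, \eqref{r2eq}, \eqref{r3eq} combined with \eqref{alphageneq}, and it propagates to all of $\mathcal{H}$ by the one-line induction $V(RS)=\alpha(R)VS=\alpha(R)\alpha(S)V$. Applying $\alpha$ then gives $RV=V\alpha(R)$, so $V\mathcal{H}=\mathcal{H}V$ is a two-sided ideal. Reducing the presentation of Theorem \ref{genreltheorem} modulo $v$, the relations $r_1,r_2,r_3$ vanish, $r_4,r_5,r_6$ become commutativity relations, and $r_7$ becomes $Y_1Y_2=0$, so $\mathcal{H}/V\mathcal{H}\cong \Z[X,Y_1,Y_2]/(Y_1Y_2)$. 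Writing $\bar Q$ for the image of $Q$, the hypothesis $Y_1Q=QY_2$ becomes $Y_1\bar Q=Y_2\bar Q$, that is, $(Y_1-Y_2)\bar Q=0$ in this commutative quotient.

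The essential step is then that $Y_1-Y_2$ is a non-zero-divisor in $\Z[X,Y_1,Y_2]/(Y_1Y_2)$: lifting to the UFD $\Z[X,Y_1,Y_2]$, a relation $(Y_1-Y_2)f\in(Y_1Y_2)$ forces $Y_1\mid f$ and $Y_2\mid f$ and hence $Y_1Y_2\mid f$. Therefore $\bar Q=0$ and we may write $Q=VZ$ with $Z\in\mathcal{H}$.

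Finally, given $Q=VZ$, I would verify that $Z$ commutes with each of the four generators using Lemma \ref{Vcancellationlemma}. For instance, $Y_1Q=QY_2$ reads $Y_1VZ=VZY_2$, which by $Y_1V=VY_2$ rearranges to $V(Y_2Z-ZY_2)=0$, giving $Y_2Z=ZY_2$; the other three commutations $Y_1Z=ZY_1$, $XZ=ZX$, and $VZ=ZV$ follow by exactly the same manoeuvre from $Y_2Q=QY_1$, $XQ=QX$, and $VQ=QV$ (together with $VX=XV$ and $Y_2V=VY_1$). The main obstacle is the conceptual one of identifying $\mathcal{H}/V\mathcal{H}$ with the explicit commutative ring $\Z[X,Y_1,Y_2]/(Y_1Y_2)$ and recognising that $Y_1-Y_2$ is a non-zero-divisor there; once those points are in hand, everything reduces to routine cancellation via Lemma \ref{Vcancellationlemma}.
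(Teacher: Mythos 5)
Your proof is correct, and the first half takes a genuinely different route from the paper. The paper establishes $Q\in V\mathcal{H}$ by expanding $Q$ in the basis of Theorem \ref{genreltheorem} and evaluating the relation $Y_1Q=QY_2$ at characters $\chi\in\Hom(\mathcal{H},\C)_3$ with $\chi(V)=0$, $\chi(Y_2)=0$; since $\chi(X)$ and $\chi(Y_1)$ can be arbitrary, the resulting polynomial identity forces the coefficients $n_i(0,b,c)$ to vanish. You instead observe that $VR=\alpha(R)V$ makes $V\mathcal{H}$ a two-sided ideal, identify $\mathcal{H}/V\mathcal{H}\cong\Z[X,Y_1,Y_2]/(Y_1Y_2)$ by reducing the presentation of Theorem \ref{genreltheorem} modulo $v$, and kill $\bar Q$ via the non-zero-divisor $Y_1-Y_2$. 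These are really two faces of the same computation --- the characters in $\Hom(\mathcal{H},\C)_3$ are exactly the $\C$-points of your commutative quotient --- but your version is more structural: it packages the coefficient-by-coefficient vanishing into a single ring-theoretic statement, avoids choosing a basis expansion of $Q$, and does not require first reducing to homogeneous $Q$. The paper's version is more elementary in that it only quotes the existence of enough characters (Lemma \ref{charactertypelemma}) rather than re-deriving the full quotient presentation. The second half of your argument --- cancelling $V$ via Lemma \ref{Vcancellationlemma} and the relations \eqref{r1eq}--\eqref{r3eq} to show $Z$ commutes with the four generators --- is identical to the paper's.
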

\begin{proof}
Let $n\in\mathbb{Z}_{\ge 0}$ be such that $Q=\sum_{k=0}^n Q_k$ where $Q_k\in \mathcal{H}_k$ for $0 \leq k \leq n$.
Note that each $Q_k$ has same properties as $Q$, and so we may assume that $Q\in \mathcal{H}_k$ for some $k\in\mathbb{Z}_{\ge 0}$.
We may also assume that $k>0$.
By Theorem~\ref{genreltheorem} we have
$$
Q=\underset{\substack{a,b,c\in\mathbb{Z}_{\ge 0},\\ a+b+2c=k}}{\sum}n_1(a,b,c)V^aX^bY_1^{c}
+\underset{\substack{a,b,c\in\mathbb{Z}_{\ge 0},\\ a+b+2c=k}}{\sum}n_2(a,b,c)V^aX^bY_2^{c}
$$
for some $n_1(a,b,c),n_2(a,b,c) \in \Z$.
We claim that $n_1(0,b,c)=n_2(0,b,c)=0$ for $b,c \in \Z_{\geq 0}$ with $b+2c =k$.
To prove this,  let $\chi\in \text{Hom}(\mathcal{H},\mathbb{C})_3$ with $\chi(V)=0$ and $\chi(Y_2)=0$
(see Lemma~\ref{charactertypelemma}). Then
$$
\chi(Q)=\underset{\substack{b,c\in\mathbb{Z}_{\ge 0},\\ b+2c=k}}{\sum}n_1(0,b,c)\chi(X)^b\chi(Y_1)^{c}.
$$
Since $Y_1Q=QY_2$ we also have
$$
\underset{\substack{b,c\in\mathbb{Z}_{\ge 0},\\ b+2c=k}}{\sum}n_1(0,b,c)\chi(X)^b\chi(Y_1)^{c+1}=\chi(Y_1Q)=\chi(QY_2)=0.
$$
We conclude that for indeterminates $x$ and $y$
$$
0=\underset{\substack{b,c\in\mathbb{Z}_{\ge 0},\\ b+2c=k}}{\sum}n_1(0,b,c)x^b y^{c+1}.
$$
Hence, $n_1(0,b,c)=0$ for $b,c\in\mathbb{Z}_{\ge 0}$ with $b+2c=k$. Similarly,
$n_2(0,b,c)=0$ for $b,c\in\mathbb{Z}_{\ge 0}$ with $b+2c=k$, thus proving our claim.
We now have that $Q=VZ$ for some $Z\in \mathcal{H}_{k-1}$. Using the properties of $Q$ and
\eqref{r1eq}, \eqref{r2eq}, and \eqref{r3eq} we find that
\begin{gather*}
V^2Z=VQ=QV=VZV,\\
VXZ=XVZ=XQ=QX=VZX,\\
VY_1Z=Y_2VZ=Y_2Q=QY_1=VZY_1,\\
VY_2Z=Y_1VZ=Y_1Q=QY_2=VZY_2.
\end{gather*}
Using Lemma~\ref{Vcancellationlemma} to cancel
$V$ from the above equations we find that $Z$ commutes with $V$, $X$, $Y_1$, and $Y_2$ and
thus lies in $Z(\mathcal{H})$ by Theorem~\ref{gentheorem}.
\end{proof}
\begin{lemma}
\label{elements}
Let
\begin{equation}
\label{elementslemmaeq1}
Z_1=X-(q^2-1)V,\quad Z_2=(q-1)VX-(Y_1+Y_2),\qquad Z_3=V^2.
\end{equation}
Then $Z_1,Z_2,Z_3 \in Z(\mathcal{H})$.
\end{lemma}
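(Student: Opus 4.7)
Since $\mathcal{H}$ is generated as a $\Z$-algebra by $V$, $X$, $Y_1$, $Y_2$ (Theorem~\ref{gentheorem}), it suffices to verify that each of $Z_1$, $Z_2$, $Z_3$ commutes with each of these four generators. This is a direct computation using the seven relations of Lemma~\ref{relations}; in each case the strategy is to reduce the desired commutator to a $\Z$-linear combination of left-hand sides of \eqref{r1eq}--\eqref{r7eq}. I record the key reductions.

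That $Z_3=V^2$ is central was already observed following \eqref{inductionfacteq}: it commutes with $V$ trivially, with $X$ by \eqref{r1eq}, and $V^2Y_1=V(Y_2V)=(VY_2)V=Y_1V^2$ via \eqref{r2eq} and \eqref{r3eq}; the $Y_2$ case is symmetric. For $Z_1=X-(q^2-1)V$, commutation with $V$ and $X$ is immediate from \eqref{r1eq}; for $Y_1$ one must check
\[
XY_1-Y_1X=(q^2-1)(VY_1-Y_1V).
\]
Using \eqref{r3eq} to rewrite $Y_1V=VY_2$, the right side becomes $(q^2-1)(VY_1-VY_2)$, which is exactly $XY_1-Y_1X$ by \eqref{r4eq}. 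The analogous identity for $Y_2$ follows from \eqref{r5eq} together with \eqref{r2eq} written as $Y_2V=VY_1$.

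For $Z_2=(q-1)VX-(Y_1+Y_2)$, commutation with $V$ follows from $[V,VX]=0$ (consequence of \eqref{r1eq}) and from
\[
[V,Y_1+Y_2]=(VY_1-Y_1V)+(VY_2-Y_2V)=(Y_2V-Y_1V)+(Y_1V-Y_2V)=0
\]
by \eqref{r2eq} and \eqref{r3eq}. Commutation with $X$ reduces to $[X,Y_1+Y_2]=0$, obtained by adding \eqref{r4eq} and \eqref{r5eq}. The main work is $[Z_2,Y_1]=0$, which after rearrangement amounts to
\[
(q-1)(VXY_1-Y_1VX)=Y_2Y_1-Y_1Y_2.
\]
Here I would apply \eqref{r3eq} ($Y_1V=VY_2$) and \eqref{r1eq} ($VX=XV$) to rewrite $Y_1VX=VY_2X$, so that $Y_1VX-VXY_2=-V(XY_2-Y_2X)$; then \eqref{r5eq} converts $XY_2-Y_2X$ into $(1-q^2)VY_1-(1-q^2)VY_2$, giving
\[
Y_1VX-VXY_2=(q^2-1)V^2Y_1-(q^2-1)V^2Y_2.
\]
Substituting this into the expression $Y_2Y_1-Y_1Y_2$ furnished by \eqref{r6eq} yields the required identity. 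The corresponding check for $[Z_2,Y_2]=0$ is entirely parallel, using \eqref{r2eq}, \eqref{r4eq}, and \eqref{r6eq}.

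The only step that is not immediate is the verification of $[Z_2,Y_1]=0$, since it mixes four of the seven relations; the other commutators each use only one or two relations. None of the reductions is long, and no new input beyond Lemma~\ref{relations} and Theorem~\ref{gentheorem} is required.
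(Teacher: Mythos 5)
Your proposal is correct and follows essentially the same route as the paper: reduce centrality to commutation with the generators $V,X,Y_1,Y_2$ via Theorem~\ref{gentheorem}, then verify each commutator directly from relations \eqref{r1eq}--\eqref{r6eq}, with the only substantive computation being $[Z_2,Y_1]=[Z_2,Y_2]=0$ via \eqref{r6eq}. The individual reductions you record (including the key identity $Y_1VX-VXY_2=(q^2-1)V^2Y_1-(q^2-1)V^2Y_2$) all check out.
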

\begin{proof}
In this proof we will repeatedly use Theorem~\ref{gentheorem} and \eqref{r1eq}, \eqref{r2eq}, and \eqref{r3eq}.
It clear that $Z_3=V^2\in Z(\mathcal{H})$ and that $Z_1$ commutes with $X$ and $V$. Now
\begin{align*}
Y_1Z_1=&Y_1X-(q^2-1)VY_2\\
=&XY_1-(q^2-1)VY_1+(q^2-1)VY_2-(q^2-1)VY_2\\
=&Z_1Y_1
\end{align*}
and
\begin{align*}
Y_2Z_1=&Y_2X-(q^2-1)VY_1\\
=&XY_2+(q^2-1)VY_1-(q^2-1)VY_2-(q^2-1)VY_1\\
=&Z_1Y_2.
\end{align*}
Hence, $Z_1\in Z(\mathcal{H})$. For $Z_2$, it is clear that $Z_2$ commutes with $V$. Now
\begin{align*}
XZ_2&=(q-1)VX^2-(XY_1+XY_2)\\
&=(q-1)VX^2-Y_1X+(1-q^2)VY_1 -(1-q^2)VY_2\\
&\quad -Y_2X-(1-q^2)VY_1+(1-q^2)VY_1\\
&=(q-1)VX^2-(Y_1X+Y_2X)\\
&=Z_2X,
\end{align*}
\begin{align*}
Y_1Z_2&=(q-1)V(Y_2X)-(Y_1^2+Y_1Y_2)\\
&=(q-1)VXY_2+(q-1)(q^2-1)V^2Y_1-(q-1)(q^2-1)V^2Y_2\\
&\quad-Y_1^2-Y_2Y_1 -(q-1)^2 (q+1) V^2Y_1 +(q-1)^2(q+1)V^2Y_2\\
&\quad+(q-1)VXY_1-(q-1)VXY_2\\
=&(q-1)VXY_1-(Y_1^2+Y_2Y_1)\\
=&Z_2Y_1,
\end{align*}
and
\begin{align*}
Y_2Z_2&=(q-1)V(Y_1X)-(Y_2Y_1+Y_2^2)\\
&=(q-1)VXY_1-(q-1)(q^2-1)V^2Y_1+(q-1)(q^2-1)V^2Y_2\\
&\quad-Y_1Y_2-Y_2^2 +(q-1)^2 (q+1) V^2Y_1 -(q-1)^2(q+1)V^2Y_2\\
&\quad-(q-1)VXY_1+(q-1)VXY_2\\
&=(q-1)VXY_2-(Y_1Y_2+Y_2^2)\\
&=Z_2Y_2.
\end{align*}
It follows that $Z_2\in Z(\mathcal{H})$, proving the lemma.
\end{proof}

\begin{theorem}
\label{centertheorem}
Define $Z_1$, $Z_2$, and $Z_3$ as in \eqref{elementslemmaeq1}.
Then $Z(\mathcal{H})$ is generated over $\mathbb{Z}$ by $Z_1, Z_2$ and $Z_3$, and
$Z_1$, $Z_2$, and $Z_3$ are algebraically independent over $\Z$.
\end{theorem}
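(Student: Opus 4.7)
The plan is to establish generation and algebraic independence in parallel by induction on the degree, using the quotient $\mathcal{H}/(V)$ as the organizing tool. The relations \eqref{r1eq}--\eqref{r7eq} together with Theorem~\ref{genreltheorem} identify $\mathcal{H}/(V)$ with the commutative ring $\Z[X,Y_1,Y_2]/(Y_1 Y_2)$, in which $Z_1 \equiv X$, $Z_2 \equiv -(Y_1+Y_2)$, and $Z_3 \equiv 0$ modulo $V$. A second ingredient is the twisted commutation relation $Vg = \alpha(g)V$ for every $g \in \mathcal{H}$, which is immediate from \eqref{r1eq}--\eqref{r3eq} and \eqref{alphageneq} on the generators $V,X,Y_1,Y_2$ and hence extends to all of $\mathcal{H}$.

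For algebraic independence, suppose $\sum_{a+2b+2c=k} n_{a,b,c}\, Z_1^a Z_2^b Z_3^c = 0$ in $\mathcal{H}_k$, with $n_{a,b,c} \in \Z$. Reducing modulo $V$ kills every term with $c \geq 1$, and the identity $(Y_1+Y_2)^b = Y_1^b + Y_2^b$ in $\Z[X,Y_1,Y_2]/(Y_1 Y_2)$ (valid for $b \geq 1$) yields $\sum_{a+2b=k} (-1)^b n_{a,b,0}\, X^a (Y_1^b + Y_2^b) = 0$. Linear independence of the distinct basis monomials $X^a Y_1^b$ and $X^a Y_2^b$ in $\mathcal{H}/(V)$ forces $n_{a,b,0}=0$ for every $(a,b)$. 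The residual identity reads $V^2 \cdot \bigl( \sum_{a+2b+2c'=k-2} n_{a,b,c'+1}\, Z_1^a Z_2^b Z_3^{c'}\bigr) = 0$, and two applications of Lemma~\ref{Vcancellationlemma} reduce the problem to degree $k-2$. The base case $k=0$ is trivial.

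For generation, I proceed by induction on $k$; the cases $k \in \{0,1\}$ are handled directly by Lemma~\ref{centerform} together with the explicit form of $\mathcal{H}_0$ and $\mathcal{H}_1$. Let $Z \in \mathcal{A}_k$. By Lemma~\ref{centerform}, the reduction $\bar Z \in (\mathcal{H}/(V))_k$ lies in the $\Z$-span of the elements $X^b(Y_1^c + Y_2^c)$, and since $Z_1^a Z_2^b \equiv (-1)^b X^a (Y_1^b + Y_2^b) \pmod{V}$, there exists a homogeneous $P \in \Z[Z_1, Z_2]$ of degree $k$ with $\bar P = \bar Z$. Consequently $Z - P = VR$ for some $R \in \mathcal{H}_{k-1}$. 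The centrality of $Z - P$ combined with $Vg = \alpha(g)V$ yields $V\alpha(g) R = V R g$ for every $g \in \mathcal{H}$; Lemma~\ref{Vcancellationlemma} cancels $V$ to give $\alpha(g) R = R g$. Specializing $g$ to each of $V, X, Y_1, Y_2$ and invoking \eqref{alphageneq} produces precisely the four relations \eqref{Qresulteq}, so Lemma~\ref{Qresult} applies and supplies $R = V Z'$ with $Z' \in Z(\mathcal{H}) \cap \mathcal{H}_{k-2}$. The inductive hypothesis gives $Z' \in \Z[Z_1, Z_2, Z_3]$, and therefore $Z = P + V^2 Z' = P + Z_3 Z'$ belongs to $\Z[Z_1, Z_2, Z_3]$.

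The main conceptual step is the descent from the centrality of $Z - P = VR$ to the four $\alpha$-twisted commutation conditions on $R$ that feed Lemma~\ref{Qresult}; once the intertwining $Vg = \alpha(g) V$ is in place, this descent is mechanical. The remaining verifications---identifying $\mathcal{H}/(V)$ with $\Z[X,Y_1,Y_2]/(Y_1 Y_2)$ and performing the mod-$V$ reductions of the monomials $Z_1^a Z_2^b Z_3^c$---are routine consequences of Theorem~\ref{genreltheorem}.
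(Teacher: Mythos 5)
Your proof is correct and follows the paper's argument in all essentials: Lemma \ref{centerform}, reduction modulo the two-sided ideal generated by $V$, the identity $(Y_1+Y_2)^c \equiv Y_1^c+Y_2^c$ modulo that ideal, the descent to the twisted commutation conditions \eqref{Qresulteq} feeding Lemma \ref{Qresult}, cancellation via Lemma \ref{Vcancellationlemma}, and induction in steps of two. The only (cosmetic) difference is in the independence step, where you compare coefficients against the monomial basis of $\mathcal{H}/(V)\cong\Z[X,Y_1,Y_2]/(Y_1Y_2)$, while the paper evaluates the relation at characters $\chi\in\Hom(\mathcal{H},\C)_3$ with $\chi(V)=\chi(Y_2)=0$; these amount to the same computation.
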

\begin{proof}
We have $Z_1,Z_2,Z_3\in Z(\mathcal{H})$ by Lemma~\ref{elements}.
As mentioned at the beginning of this section, $Z(\mathcal{H}) = \oplus_{k \in \Z_{\geq 0}} \mathcal{A}_k$.
Let $k \in \Z_{\geq 0}$. To prove the first assertion of the theorem it will suffice to
prove the following claim: if $Z \in \mathcal{A}_k$,
then $Z$ is a polynomial in $Z_1,Z_2$, and $Z_3$. We will prove this claim by induction on $k$.
Direct computations prove that the claim holds if $k=0$ or $k=1$. Let $k \in \Z$ with $k \geq 2$,
and assume that the claim holds for $j \in \Z$ with $0 \leq j <k$; we will prove that the claim
holds for $k$. Let $Z \in \mathcal{A}_k$.
Let the $J$ be the two-sided ideal of $\mathcal{H}$ generated by $V$.
By Lemma~\ref{centerform} we have that
$$
Z=\underset{\substack{a,b,c\in\mathbb{Z}_{\ge 0},\\ a+b+2c=k}}{\sum}n(a,b,c)V^{a}X^b(Y_1^{c}+Y_2^c)
$$
for some $n(a,b,c)\in\mathbb{Z}$.
By \eqref{r6eq} and \eqref{r7eq}, if $c\in\mathbb{Z}_{\ge 0}$, then
$(Y_1+Y_2)^c\equiv Y_1^c+Y_2^c\Mod{J}$.
Hence,
\begin{align*}Z\equiv&\underset{\substack{a,b,c\in\mathbb{Z}_{\ge 0},\\ a+b+2c=k}}{\sum}n(a,b,c)V^{a}X^b(Y_1+Y_2)^c\Mod{J}
\\\equiv&\underset{\substack{a,b,c\in\mathbb{Z}_{\ge 0},\\ a+b+2c=k}}{\sum}n(a,b,c)V^{a}X^b((q-1)VX-Z_2)^c\Mod{J}
\\\equiv&\underset{\substack{b,c\in\mathbb{Z}_{\ge 0},\\ b+2c=k}}{\sum}n(0,b,c)X^b((q-1)VX-Z_2)^c\Mod{J}
\\\equiv&\underset{\substack{b,c\in\mathbb{Z}_{\ge 0},\\ b+2c=k}}{\sum}(-1)^cn(0,b,c)X^bZ_2^c\Mod{J}
\\\equiv&\underset{\substack{b,c\in\mathbb{Z}_{\ge 0},\\ b+2c=k}}{\sum}(-1)^cn(0,b,c)(Z_1+(q^2-1)V)^bZ_2^c\Mod{J}
\\\equiv&\underset{\substack{b,c\in\mathbb{Z}_{\ge 0},\\ b+2c=k}}{\sum}(-1)^cn(0,b,c)Z_1^bZ_2^c\Mod{J}.
\end{align*}
It follows that there is a polynomial $p(x,y)\in\mathbb{Z}[x,y]$ and  $Q\in\mathcal{H}_{k-1}$ such that
$
Z=p(Z_1,Z_2)+VQ.
$
Since $VQ=Z-p(Z_1,Z_2)$ we have $VQ\in Z(\mathcal{H})$.
Using \eqref{r1eq}, \eqref{r2eq}, \eqref{r3eq}, and $VQ\in Z(\mathcal{H})$ we find that
$V^2Q=VQV$,  $VXQ=XVQ=VQX$, $VY_1Q=Y_2VQ=VQY_2$, and $VY_2Q=Y_1VQ=VQY_1$.
Using Lemma~\ref{Vcancellationlemma} to cancel $V$ from the just mentioned equations, we find that \eqref{Qresulteq} holds.
Hence there exists  $Z'\in Z(\mathcal{H}) \cap \mathcal{H}_{k-2}$ such that $Q=VZ'$ by Lemma~\ref{Qresult}.
By the induction hypothesis
there exists a polynomial $q(x,y,z)\in\mathbb{Z}[x,y,z]$ such that $Z'=q(Z_1,Z_2,Z_3)$. Our claim follows.

Let $k \in \Z_{\geq 0}$. To prove that $Z_1$, $Z_2$, and $Z_3$ are algebraically independent over $\Z$
it will suffice to prove the following claim: if
$$
0 = \sum_{\substack{a,b,c \in \Z_{\geq 0},\\ a+2(b+c)=k}} n(a,b,c) Z_1^a Z_2^b Z_3^c
$$
for some $n(a,b,c) \in \Z$, then $n(a,b,c)=0$ for all $a,b,c \in \Z_{\geq 0}$ with $a+2(b+c)=k$.
This claim is clear if $k=0$ or $k=1$. Assume that $k \geq 2$ and that the claim holds for $j \in \Z_{\geq 0}$
with $0\leq j <k$; we will prove the claim for $k$. We first prove that $n(a,b,0)=0$ for $a,b \in \Z_{\geq 0}$
with $a+2b =k$. To see this,  let $\chi\in \text{Hom}(\mathcal{H},\mathbb{C})_3$ with $\chi(V)=0$ and $\chi(Y_2)=0$
(see Lemma~\ref{charactertypelemma}). Then since $Z_3=V^2$, we have
$$
0=\sum_{\substack{a,b \in \Z_{\geq 0},\\ a+2b=k}} n(a,b,0) \chi(Z_1)^a \chi(Z_2)^b
=\sum_{\substack{a,b\in \Z_{\geq 0},\\ a+2b=k}} n(a,b,0) \chi(-1)^b \chi(X)^a \chi(Y_1)^b.
$$
This implies that
$$
0 = \sum_{\substack{a,b\in \Z_{\geq 0},\\ a+2b=k}} n(a,b,0) \chi(-1)^b x^a y^b
$$
where $x$ and $y$ are indeterminates. Hence, $n(a,b,0)=0$ for $a,b \in \Z_{\geq 0}$ with $a+2b=0$.
It follows that
\begin{align*}
0& = \sum_{\substack{a,b,c \in \Z_{\geq 0},\\ a+2(b+c)=k,\\ c>0}} n(a,b,c) Z_1^a Z_2^b Z_3^c\\
& = \left(\sum_{\substack{a,b,c \in \Z_{\geq 0},\\ a+2(b+c)=k,\\ c>0}} n(a,b,c) Z_1^a Z_2^b Z_3^{c-1} \right) Z_3.
\end{align*}
Since $Z_3=V^2$, by Lemma~\ref{Vcancellationlemma} we have
$$
0=\sum_{\substack{a,b,c \in \Z_{\geq 0},\\ a+2(b+c)=k,\\ c>0}} n(a,b,c) Z_1^a Z_2^b Z_3^{c-1}.
$$
By the induction hypothesis, $n(a,b,c)=0$ for $a,b,c \in \Z_{\geq 0}$ with $a+2(b+c)=k$ and $c>0$. This proves the claim.
\end{proof}

Theorem \ref{centertheorem} and the Hilbert-Serre theorem can be used to calculate the Hilbert-Poincar\'e series of the
center $Z(\mathcal{H})$. (see Theorem~11.1 of \cite{AM} for the Hilbert-Serre theorem; see also Proposition 1 of Ch.~V, \S 5, no.~1 (p.~108) of \cite{B}). The following result directly calculates this series.

\begin{corollary}
\label{Zcorollary}
If $t$ is an indeterminate, then
\begin{equation}
\label{Zcoroeq}
\sum_{k=0}^\infty \mathrm{rank}_\Z (\mathcal{A}_k)\, t^k =
\dfrac{1}{(1-t)(1-t^2)^2}.
\end{equation}
\end{corollary}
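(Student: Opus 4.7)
The plan is to leverage Theorem \ref{centertheorem} directly and then extract the generating function by an elementary power-series calculation. First I would check that the three generators are homogeneous elements of $\mathcal{H}$: from \eqref{elementslemmaeq1} we have $Z_1=X-(q^2-1)V \in \mathcal{H}_1$, while $Z_2=(q-1)VX-(Y_1+Y_2) \in \mathcal{H}_2$ and $Z_3=V^2 \in \mathcal{H}_2$. Therefore each monomial $Z_1^a Z_2^b Z_3^c$ lies in $\mathcal{H}_{a+2b+2c}$, and the algebra homomorphism $\Z[x,y,z] \to Z(\mathcal{H})$ sending $x,y,z$ to $Z_1,Z_2,Z_3$ is, when we weight $x$ by $1$ and $y,z$ by $2$, a homomorphism of graded $\Z$-algebras.

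By Theorem \ref{centertheorem}, this homomorphism is surjective (the $Z_i$ generate $Z(\mathcal{H})$) and injective (the $Z_i$ are algebraically independent over $\Z$), hence an isomorphism. Combining this with the graded decomposition $Z(\mathcal{H}) = \oplus_{k\geq 0} \mathcal{A}_k$ recorded in \eqref{Zgradedeq}, it follows that for each $k \in \Z_{\geq 0}$ the set
\[
\{\,Z_1^a Z_2^b Z_3^c : (a,b,c) \in \Z_{\geq 0}^3,\ a+2b+2c=k\,\}
\]
is a $\Z$-basis of $\mathcal{A}_k$. In particular, $\mathrm{rank}_\Z(\mathcal{A}_k)$ equals the number of such triples $(a,b,c)$.

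Finally, I would compute the generating function directly. Since the three sums below converge in $\Z[[t]]$,
\[
\sum_{k=0}^\infty \mathrm{rank}_\Z(\mathcal{A}_k)\, t^k
= \sum_{a,b,c \geq 0} t^{a+2b+2c}
= \Bigl(\sum_{a\geq 0} t^a\Bigr)\Bigl(\sum_{b\geq 0} t^{2b}\Bigr)\Bigl(\sum_{c\geq 0} t^{2c}\Bigr)
= \dfrac{1}{(1-t)(1-t^2)^2},
\]
which is exactly \eqref{Zcoroeq}. There is no substantial obstacle here: the content is entirely in Theorem \ref{centertheorem}, and the corollary reduces to the standard generating-function identity for the Hilbert series of a free graded commutative polynomial ring whose generators have weights $1,2,2$.
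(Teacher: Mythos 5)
Your proof is correct and follows essentially the same route as the paper: both deduce from Theorem \ref{centertheorem} that the monomials $Z_1^aZ_2^bZ_3^c$ with $a+2(b+c)=k$ form a $\Z$-basis of $\mathcal{A}_k$ and then count. The paper packages the count as a rank recursion $\mathrm{rank}_\Z(\mathcal{A}_{k+2})=\mathrm{rank}_\Z(\mathcal{A}_k)+\lfloor k/2\rfloor+2$ followed by a ``standard calculation,'' while you multiply the three geometric series directly; this is only a cosmetic difference.
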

\begin{proof}
Let $Z_1, Z_2$, and $Z_3$ be as in \eqref{elementslemmaeq1}. By Theorem \ref{centertheorem}
the elements $Z_1^a Z_2^b Z_3^c$ with $a,b,c \in \Z_{\geq 0}$ and $a+2(b+c) =k$ form a basis over $\Z$
for $\mathcal{A}_k$. This implies that
\begin{equation}
\label{Zocoroeq2}
\mathrm{rank}_\Z( \mathcal{A}_{k+2} ) =\mathrm{rank}_\Z(\mathcal{A}_k )+ \lfloor k/2 \rfloor +2
\end{equation}
for $k \in \Z_{\geq 0}$. A standard calculation now proves \eqref{Zcoroeq}.
\end{proof}

The first few ranks of $\mathcal{A}_k$ are
$$
\begin{array}{cllllllllll}
\toprule
k & 0 & 1 & 2 & 3 & 4 & 5 & 6 & 7 & 8 & 9  \\
\midrule
\text{rank}_\Z(\mathcal{A}_k) & 1 & 1 & 3 & 3 & 6 & 6 & 10 & 10 & 15 & 15\\
\bottomrule
\end{array}
$$

\bibliographystyle{plain}
\bibliography{PHA}

\end{document}